 \definecolor{darkslategray}{rgb}{0.18, 0.31, 0.31}
\newtheorem{thm}{Theorem}[section]
\newtheorem{theorem}[thm]{Theorem}
\newtheorem{corollary}[thm]{Corollary}
\newtheorem{lemma}[thm]{Lemma}
\newtheorem{prop}[thm]{Proposition}
\newtheorem{proposition}[thm]{Proposition}
\theoremstyle{definition}
\newtheorem{example}[thm]{Example}
\newtheorem{definition}[thm]{Definition}
\newtheorem{remark}[thm]{Remark}  
\newtheorem{maintheorem}{Theorem}
\numberwithin{equation}{section}
\newcommand*\one{\mathbbm{1}} 
\newcommand{\abs}[1]{\left\vert#1\right\vert}
\newcommand{\R}{\mathbb R}
\newcommand{\F}{\mathbb F}
\newcommand{\CSM}{\operatorname{CSM}}
\newcommand{\Q}{\mathbb{Q}}
\newcommand{\Z}{\mathbb{Z}}
\newcommand{\val}{\mathbb I}
\newcommand{\Ldim}{\underline{\dim}\,}
\newcommand{\N}{\mathbb N}
\newcommand{\mbf}{\mathbf}
\newcommand{\tb}[1]{\textbf{#1}}
\newcommand{\sgn}{\operatorname{sgn}}
\renewcommand{\b}{\operatorname{B}}
\newcommand{\espan}{\operatorname{span}}
\newcommand{\bg}{\operatorname{BG}}
\newcommand{\cbg}{\operatorname{cBG}}
\newcommand{\al}{\operatorname{A}}
\newcommand{\im}{\operatorname{Im}}
\renewcommand{\ker}{\operatorname{Ker}}
\newcommand{\relint}{\operatorname{relint}}
\newcommand{\Poin}{\operatorname{Poin}}
\begin{document}
\title{Valuations and the  Hopf Monoid of Generalized Permutahedra}
\author{\textsf{Federico Ardila\footnote{\noindent \textsf{San Francisco State University, Universidad de Los Andes; federico@sfsu.edu. Partially supported by NSF grant DMS1855610 and Simons Fellowship 613384.}}}\and \textsf{Mario Sanchez\footnote{\noindent \textsf{University of California, Berkeley; mario\_sanchez@berkeley.edu.  Partially supported by NSF Graduate Research Fellowship DGE 1752814.}}}}
\maketitle

\makeatletter
\providecommand\@dotsep{5}
\makeatother
\relax

\begin{abstract}
The goal of this paper is to show that valuation theory and Hopf theory are compatible on the class of generalized permutahedra.
We prove that the Hopf structure $\mbf{GP}^+$ on these polyhedra descends, modulo the inclusion-exclusion relations, to an indicator Hopf monoid $\mathbb{I}(\mbf{GP}^+)$ of generalized permutahedra that is isomorphic to the Hopf monoid of weighted ordered set partitions. This quotient Hopf monoid $\mathbb{I}(\mbf{GP}^+)$ is cofree. It is the terminal object in the category of Hopf monoids with polynomial characters; this partially explains the ubiquity of generalized permutahedra in the theory of Hopf monoids.

This Hopf theoretic framework offers a simple, unified explanation for many new and old valuations on generalized permutahedra and their subfamilies. 
Examples include, 
for matroids: the Chern-Schwartz-MacPherson cycles, Eur's volume polynomial, the Kazhdan-Lusztig polynomial, the motivic zeta function, and the Derksen-Fink invariant; for posets: the order polynomial, Poincar\'e polynomial, and poset Tutte polynomial; for generalized permutahedra: the universal Tutte character and the corresponding class in the Chow ring of the permutahedral variety.
We obtain several algebraic and combinatorial corollaries; for example: the existence of the valuative character group of $\mbf{GP}^+$, and the indecomposability of a nestohedron into smaller nestohedra.

\end{abstract}


\addtocontents{toc}{\protect\setcounter{tocdepth}{3}}
\tableofcontents

\section{Introduction}

\subsection{Algebraic and polyhedral structures in combinatorics} \label{sec:intro1}

Joni and Rota \cite{joni82:_coalg}, Schmitt \cite{schmitt93:_hopf}, and others showed that many families of combinatorial objects have natural operations of ``merging" and ``breaking" that give the family a Hopf algebraic structure. 
Edmonds \cite{edmonds70}, Lov\'asz \cite{lovasz09}, Postnikov \cite{postnikov09},
Stanley \cite{stanley1986two}, and others showed that many families of combinatorial objects can be modeled geometrically as polyhedra -- often part of the family of generalized permutahedra.
These algebraic and geometric structures reflect and shed light on the underlying combinatorial structure of the families under study.

Aguiar and Ardila \cite{AA17} unified these algebraic and polytopal points of view, showing that the family of generalized permutahedra has the structure of a Hopf monoid -- a refinement of Hopf algebras that is more convenient for combinatorial settings. They also showed that this is the largest family of polytopes that supports such a structure. This Hopf monoid $\mbf{GP}^+$ (or certain quotients of it) contains the Hopf monoids of graphs, (pre)posets, matroids, paths, hypergraphs, simplicial complexes, and building sets, among others. This framework allowed them to unify and prove numerous known and new results. The most relevant ones to this project are the following:

$\bullet$ By developing the character theory of Hopf monoids, Aguiar and Ardila showed that important polynomial and quasisymmetric invariants of combinatorial objects come from a simple character of $\mbf{GP}^+$. These include the chromatic polynomial of a graph, the order polynomial of a poset, and the Billera-Jia-Reiner polynomial of a matroid. The celebrated reciprocity theorems for these polynomials are  instances of the same Hopf-theoretic reciprocity phenomenon for $\mbf{GP}^+$.

$\bullet$ They gave the optimal formula for the antipode of the Hopf monoid $\mbf{GP}^+$. This gave, for the first time, cancellation-free formulas for the antipodes of graphs (also found by Humpert-Martin \cite{humpert12}), matroids, and posets, among others.

This work raises the following question.

\medskip

\noindent 
\textbf{Question.} Why are many important Hopf monoids related to generalized permutahedra?

\medskip

\noindent
This paper offers one possible answer to this question, in the Universality Theorem \ref{mainthm:universal}.

\subsection{Polyhedral valuations in combinatorics and geometry}\label{sec:intro2}

Valuations are ways of measuring polytopes that behave well under subdivision. More concretely, let $\mathcal{P}$ be a family of polytopes and $A$ be an abelian group. A function $f:\mathcal{P} \rightarrow A$ is a \textbf{weak valuation} if for any subdivision of a polytope $P \in \mathcal{P}$ into polyhedra $P_1, \ldots, P_k \in \mathcal{P}$ (where for any $a$ and $b$,  $P_a \cap P_b$ is either empty or one of the $P_c$s), we have the inclusion-exclusion relation
\begin{equation}\label{eq:weakval}
f(P) = \sum_{i=1}^k (-1)^{\dim P  - \dim P_i} f(P_i). 
\end{equation}
It is a \textbf{strong valuation} if there exists a linear function $\hat{f}$ such that $f(P) = \hat{f}(\one_P)$, where $\one_P$ is the indicator function of $P$, given by $\one_P(x) = 1$ for $x \in P$ and $\one_P(x) = 0$ for $x \notin P$. Any strong valuation is also a weak valuation. The converse is also true for the class $\mathcal{P}$ of generalized permutahedra, but not necessarily for its subclasses; see Section \ref{subsec:valuations}.

The volume, the number of lattice points, and the Ehrhart polynomial (given by $\mathrm{Ehr}_P(t) = |tP \cap \Z^d|$ for $t \in \N$) are natural ways of measuring a polytope, and they are strong valuations.
However, certain families $\mathcal{P}$ of polyhedra can also be measured using intriguing combinatorial and algebro-geometric valuations that, unexpectedly, also satisfy \eqref{eq:weakval}. These valuations include:

$\bullet$ the Tutte polynomial of a matroid \cite{Speyer},

$\bullet$ the Chern-Schwartz-MacPherson cycles of a matroid \cite{LRS},

$\bullet$ the Kazhdan-Lusztig polynomial of a matroid \cite{EPW},

$\bullet$ the motivic zeta function of a matroid \cite{JKU19},

$\bullet$ the Derksen-Fink invariant of a matroid \cite{DF10},

$\bullet$ the order polynomial of a poset \cite{stanley1986two},

$\bullet$ the Poincar\'e polynomial of a poset cone \cite{DBKR19},


\noindent 
For other examples, see Table \ref{table:valuations}. 
For concrete examples for matroids and posets, see Examples \ref{ex:u36 subdivision} and \ref{ex:posetvaluation}, respectively.
This raises the following question.

\medskip

\noindent 
\textbf{Question.} Why are many important invariants of  matroids and posets also polyhedral valuations?

\medskip

\noindent This paper offers one possible answer to this question within the framework of Hopf monoids in Theorems \ref{mainthm1} and \ref{mainthm:vals}.

Valuations of matroids are especially important because they offer ways of analyzing matroid subdivisions: these are the subdivisions of a matroid polytope into smaller matroid polytopes. Such subdivisions arise naturally in various algebro-geometric contexts, such as the compactification of the moduli space of hyperplane arrangements of Hacking, Keel, and Tevelev \cite{HKT} and Kapranov \cite{Kap}, the compactification of fine Schubert cells in the Grassmannian of Lafforgue \cite{Lafforgue1,Lafforgue2}, the K-theory of the Grassmannian \cite{SpeyerKtheory}, the stratification of the tropical Grassmannian \cite{SS04} and other tropical homogeneous spaces \cite{Rincon}, and the study of tropical linear spaces by Ardila and Klivans \cite{ArdilaKlivans} and Speyer  \cite{Speyer}. 

A foundational result by Derksen and Fink \cite{DF10} gave the universal valuation for matroids and for generalized permutahedra. Their result was extended by Eur, Sanchez, and Supina \cite{ESS} who gave the universal valuation for Coxeter matroids and for Coxeter generalized permutahedra.

\subsection{Hopf algebraic structures on generalized permutahedra and valuations}

The goal of this paper is to explain the intimate relationship between the Hopf algebraic structures of Section \ref{sec:intro1} and the valuations of Section \ref{sec:intro2}.
Let $\F$ be a field of characteristic $0$ and let $\mbf{GP}^+$ be the ($\F$-linear) Hopf monoid of extended generalized permutahedra, whose components are the vector spaces
\[
\mbf{GP}^+[I]  = \espan\{P \, | \, P \textrm{ is an extended generalized permutahedron in } \R^I\} 
\]
for all finite sets $I$. Consider the subdivisions of polyhedra in this family into polyhedra in this family; they give the \textbf{inclusion-exclusion subspecies} $\mbf{ie} \subset \mbf{GP}^+$ consisting of the vector spaces
\[
\mbf{ie}[I] := 
\espan \left\{P - \sum_i (-1)^{\dim P - \dim P_i} P_i \, | \,
\{P_i\} \textrm{ is a polyhedral subdivision of $P$} \right\} 
\subset \mbf{GP}^+[I].  
\]
Consider the indicator vector spaces of generalized permutahedra:
\begin{eqnarray*}
\mathbb{I}(\mbf{GP}^+)[I] & := & \espan\{\one_P \, | \, P \textrm{ is an extended generalized permutahedron in } \R^I\} \\
& \cong & \mbf{GP}^+[I] / \mbf{ie}[I],
\end{eqnarray*}
where $\one_P : \R^I \rightarrow \F$ is the indicator function of $P$, which equals $1$ in $P$ and $0$ outside of $P$.

The following are our main results.

\bigskip
\noindent \textsf{\textbf{The indicator Hopf monoid.}} 
The Hopf monoid $\mbf{GP}^+$ descends to the quotient $\mathbb{I}(\mbf{GP}^+)$:

\begin{maintheorem}\label{mainthm1} 
Let $\mbf{GP}^+$ be the Hopf monoid of extended generalized permutahedra.
\begin{enumerate}
\item
The inclusion-exclusion species $\mbf{ie}$ is a Hopf ideal of $\mbf{GP}^+$.
\item 
The quotient $\val(\mbf{GP}^+) =  \mbf{GP}^+/\mbf{ie}$ is a Hopf monoid. 
\item
The resulting \textbf{indicator Hopf monoid of extended generalized permutahedra} is isomorphic to the \textbf{Hopf monoid of weighted ordered set partitions}: 
\[
\val(\mbf{GP}^+) \cong \mbf{w\Sigma^*}.
\]
\item
For any Hopf submonoid $\mbf{H} \subseteq \mbf{GP}^+$, the subspecies $\val(\mbf{H}) \subseteq \val(\mbf{GP}^+)$ is a Hopf quotient of $\mbf{H}$; namely, $\val(\mbf{H}) \cong \mbf{H}/(\mbf{ie} \cap \mbf{H})$.
\end{enumerate}
\end{maintheorem}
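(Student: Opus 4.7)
The plan is to prove (1) first, deduce (2) formally, construct the isomorphism in (3), and then obtain (4) by functoriality of the ideal construction. For (1), I would verify that $\mbf{ie}$ is stable under both the product $\mu_{S,T}$ and the coproduct $\Delta_{S,T}$ of $\mbf{GP^+}$ for every decomposition $I = S \sqcup T$. The product takes $(P,Q) \in \mbf{GP^+}[S] \otimes \mbf{GP^+}[T]$ to the direct product polytope $P \times Q$; given any subdivision $\{P_i\}$ of $P$, the collection $\{P_i \times Q\}$ subdivides $P \times Q$, and the dimension defects $\dim(P \times Q) - \dim(P_i \times Q) = \dim(P) - \dim(P_i)$ match, so the inclusion-exclusion relator maps to another such relator. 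For the coproduct, the key is that the face map $P \mapsto P^w$ (maximum face in direction $w$) is a classical polytopal valuation, and the Aguiar--Ardila coproduct is built from face operations composed with coordinate projections; both send polyhedral subdivisions to polyhedral subdivisions compatibly with the inclusion-exclusion relation. Together, these show $\mbf{ie}$ is a Hopf ideal, proving (1); then (2) is automatic, as $\val(\mbf{GP^+}) = \mbf{GP^+}/\mbf{ie}$ inherits a Hopf monoid structure.

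For (3), the plan is to build an explicit isomorphism $\Phi : \val(\mbf{GP^+}) \to \mbf{w\Sigma^*}$ using the fact that ordered set partitions of $I$ index the cones of the braid fan, which refines the normal fan of every generalized permutahedron in $\R^I$. At each vertex $v$ of a GP $P$, the tangent cone corresponds to an ordered set partition $\pi_v$, and the vertex itself carries a weight $w_v$ recording its coordinates in a basis adapted to $\pi_v$. Using McMullen's theorem on the indicator algebra of polyhedra (equivalently, a Brianchon--Gram-style decomposition), the class of $\one_P$ modulo the inclusion-exclusion relations is captured by the formal sum $\Phi(P) = \sum_v (\pi_v, w_v) \in \mbf{w\Sigma^*}[I]$. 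I would then verify: well-definedness on $\val(\mbf{GP^+})$ from the valuation property of tangent-cone decompositions; surjectivity from the fact that simplices and Minkowski sums of coordinate simplices already realize all weighted ordered set partitions; injectivity by recovering weights via pairings with suitable linear functionals; and compatibility with the Hopf structure from the identifications ``product $=$ concatenation of ordered set partitions with additive weights'' and ``coproduct $=$ splitting an ordered set partition along $I = S \sqcup T$.'' The main obstacle of the entire theorem is precisely the injectivity and dimension-matching in (3): showing that after quotienting by $\mbf{ie}$ there are no further relations among weighted ordered set partitions. I would handle this by reducing an arbitrary GP modulo $\mbf{ie}$ to a canonical form as a weighted sum of ``corner'' GPs indexed by ordered set partitions, using iterated subdivisions along supporting hyperplanes, and then invoking the universal valuation theorem of Derksen--Fink to close the counting argument.

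For (4), let $\mbf{H} \subseteq \mbf{GP^+}$ be a Hopf submonoid. Then $\mbf{H} \cap \mbf{ie}$ is automatically a Hopf ideal of $\mbf{H}$: since the product and coproduct of $\mbf{H}$ are the restrictions of those of $\mbf{GP^+}$, the ideal condition established in (1) restricts to $\mbf{H}$. Hence $\val(\mbf{H}) := \mbf{H}/(\mbf{H} \cap \mbf{ie})$ is a Hopf quotient of $\mbf{H}$, and the inclusion $\mbf{H} \hookrightarrow \mbf{GP^+}$ descends to a natural map $\val(\mbf{H}) \to \val(\mbf{GP^+})$ that realizes $\val(\mbf{H})$ as a Hopf subspecies of $\val(\mbf{GP^+})$, as claimed.
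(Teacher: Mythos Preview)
Your approach differs from the paper's and has genuine gaps. The paper does not verify the Hopf ideal axioms on $\mbf{ie}$ directly; instead it constructs a surjective Hopf morphism $\varphi\colon \mbf{GP^+} \to \mbf{w\Sigma^*}$ as the composite of the Brianchon--Gram map $\bg$ (sending $P$ to the signed sum of tangent cones at \emph{all} relatively bounded faces), the cone--preposet dictionary, a sign twist, and a straightening map $\st$ (expanding each weighted preposet cone over its prelinear extensions). Each factor is checked to be a Hopf morphism separately; then $\ker\varphi=\mbf{ie}$ is established using a generating set for $\mbf{ie}$ built from Brianchon--Gram and straightening relators, and all four claims follow at once from the First Isomorphism Theorem.

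Your coideal argument in (1) does not work as written. The assertion that face maps and projections ``send polyhedral subdivisions to polyhedral subdivisions'' does not yield the coideal condition: $\Delta_{S,T}$ lands in a tensor product, and what you must show is that $\sum_i\lambda_i\,P_i|_S\otimes P_i/_S$ lies in $\mbf{ie}[S]\otimes\mbf{GP^+}[T]+\mbf{GP^+}[S]\otimes\mbf{ie}[T]$, whereas the pairs $(P_i|_S,P_i/_S)$ coming from a subdivision of $P$ do not form a product of a subdivision on $S$ with one on $T$. This can be repaired by passing to indicator functions and using that the natural map from $\val(\mbf{GP^+})[S]\otimes\val(\mbf{GP^+})[T]$ into real-valued functions on $\R^S\times\R^T$ is injective, but you do not supply this step, and the paper's route avoids it entirely.

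Your map $\Phi$ in (3) is misdefined. The tangent cone at a vertex of a generalized permutahedron is a \emph{preposet} cone, not an OSP cone, unless the vertex happens to be simple; so your ``$\pi_v$'' is not an ordered set partition in general. Worse, an extended generalized permutahedron may have no vertices at all, so a vertex-indexed formula cannot cover $\mbf{GP^+}$. This is exactly why the paper's $\bg$ sums over all relatively bounded faces (obtaining weighted preposet cones) and then applies $\st$ to straighten these into signed sums of weighted ordered set partitions. Finally, the product on $\mbf{w\Sigma^*}$ is the quasishuffle, not concatenation, so your proposed compatibility check for the product would also fail.
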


\noindent It is also interesting to quotient $\val(\mbf{GP}^+)$ further by identifying $P$ with its translates $P+v$, as is done in the McMullen polytope algebra. We define the \textbf{extended McMullen subspecies}\footnote{This behaves very differently from the \textbf{McMullen subspecies}  $\mbf{Mc}$ of the Hopf monoid of \textbf{bounded} generalized permutahedra $\mbf{GP}$ and the quotient $\mbf{GP}/\mbf{Mc}$; see Sections \ref{sec:relatedwork} and \ref{sec:indicator2}.} 
\[
\mbf{Mc}^+[I] := \mbf{ie}[I] + \espan \left\{P - (P+v) \, | \,  P \in GP^+[I], v \in \R^I\right\}  \subset \mbf{GP}^+[I].  
\]
We prove that $\mbf{Mc}^+$ is also a Hopf ideal of $\mbf{GP}^+$, and the resulting quotient is isomorphic to the \textbf{indicator Hopf monoid of preposet cones}, the \textbf{indicator Hopf monoid of poset cones}, and the 
\textbf{Hopf monoid of ordered set partitions}; see Theorems \ref{thm:McMullen} and Proposition \ref{prop:P=PP}:
\[
\mbf{GP}^+/\mbf{Mc}^+ \cong \mathbb{I}(\mbf{P}) \cong \mathbb{I}(\mbf{PP}) \cong \mbf{\Sigma^*}.
\]


Building on Aguiar and Ardila's formula for the antipode of $\mbf{GP}^+$, Theorem \ref{mainthm1} gives the following elegant formula for the antipode of $\val(\mbf{GP}^+)$.

\begin{corollary}\label{cor:main}
The antipode of the indicator Hopf monoid of generalized permutahedra $\val(\mbf{GP}^+)$ is given by 
\[
s_I(P) = (-1)^{|I|-\dim P} P^\circ \qquad \text{ for } P \in \mbf{GP}^+[I], 
\]
where $P^\circ$ is the relative interior of $P$.
\end{corollary}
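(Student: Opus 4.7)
The plan is to deduce the formula from Aguiar--Ardila's cancellation-free antipode formula for $\mbf{GP^+}$, by reducing it modulo the inclusion-exclusion ideal $\mbf{ie}$. By parts (1)--(2) of Theorem A, the projection $\pi: \mbf{GP^+} \twoheadrightarrow \val(\mbf{GP^+})$ is a morphism of Hopf monoids, so the antipode of $\val(\mbf{GP^+})$ is $\pi \circ s_I^{\mbf{GP^+}}$. It therefore suffices to evaluate the image of Aguiar--Ardila's formula in the quotient.

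Next I would invoke the key polytope identity $\one_P = \sum_{F \leq P} \one_{F^\circ}$, the disjoint decomposition of a polytope into the relative interiors of its non-empty faces. In $\val(\mbf{GP^+})$ this reads $P \equiv \sum_{F \leq P} F^\circ$, and applying Möbius inversion on the face lattice of $P$ (whose Möbius function is $\mu(F, P) = (-1)^{\dim P - \dim F}$) yields the dual identity
\[
P^\circ \equiv \sum_{F \leq P} (-1)^{\dim P - \dim F} F \pmod{\mbf{ie}}.
\]
This reformulates the claim as the statement that, modulo $\mbf{ie}$,
\[
s_I(P) \equiv \sum_{F \leq P} (-1)^{|I| - \dim F} F.
\]

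The remaining step is to match this alternating sum to the image of Aguiar--Ardila's formula. Their expression writes $s_I^{\mbf{GP^+}}(P)$ as a cancellation-free signed sum over certain polytopes built from $P$; after expanding each such polytope via the face decomposition above and collecting coefficients, each face $F \leq P$ should appear with total coefficient $(-1)^{|I| - \dim F}$. The hardest part will be the sign and multiplicity bookkeeping: many terms in the cancellation-free expansion must combine via the Möbius function of the face lattice, and this collapse only occurs after reducing modulo $\mbf{ie}$. A potentially cleaner alternative is to transport the computation through the isomorphism $\val(\mbf{GP^+}) \cong \mbf{w\Sigma^*}$ of Theorem A(3), compute the antipode directly on weighted ordered set partitions using the standard reversal formula for $\mbf{\Sigma^*}$, and translate the result back in terms of relative interiors of generalized permutahedra.
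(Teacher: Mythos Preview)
Your approach is correct and is essentially the paper's own argument, run in the reverse direction: the paper starts from the Aguiar--Ardila formula and reduces modulo $\mbf{ie}$ to reach $(-1)^{|I|-\dim P}P^\circ$, while you start from $(-1)^{|I|-\dim P}P^\circ$, expand it via M\"obius inversion on the face lattice, and aim to recognize the Aguiar--Ardila formula.

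The only issue is that you are overestimating the difficulty of the final step. The Aguiar--Ardila cancellation-free antipode formula for $\mbf{GP^+}$ is precisely
\[
s_I^{\mbf{GP^+}}(P) \;=\; (-1)^{|I|}\sum_{Q \leq P} (-1)^{\dim Q}\, Q \;=\; \sum_{Q \leq P} (-1)^{|I|-\dim Q}\, Q,
\]
summing over the faces $Q$ of $P$. This is \emph{already identical}, on the nose and before any quotient, to your reformulated claim. There is no ``sign and multiplicity bookkeeping'' to do, no further collapsing of terms, and no need to detour through the isomorphism with $\mbf{w\Sigma^*}$. Once you write down the Aguiar--Ardila formula explicitly, your M\"obius-inversion identity $P^\circ \equiv \sum_{F\leq P}(-1)^{\dim P-\dim F}F$ finishes the proof in one line.
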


\bigskip
\noindent \textsf{\textbf{Cofreeness and universality.}} A priori, it seems very surprising that so many Hopf monoids of interest are closely related to the Hopf monoid of generalized permutahedra $\mbf{GP}^+$, as shown in \cite{AA17}. We give a possible explanation of this phenomenon, by showing that the indicator Hopf monoid of generalized permutahedra $\mathbb{I}(\mbf{GP}^+) = \mbf{GP}^+/\mbf{ie}$ and the further quotient by the extended McMullen subspecies $\mbf{GP}^+/\mbf{Mc}^+$ satisfy very natural universality properties among \textbf{all} Hopf monoids.

This is most elegantly stated for $\mbf{GP}_{\mathbb{N}}^+$, which consists of the extended generalized permutahedra whose supporting hyperplanes have non-negative integral coefficients.
Define a \textbf{polynomial character} on a Hopf monoid $\mbf{H}$ to be a multiplicative function from $\mbf{H}$ to the polynomial ring $\mathbb{F}[t]$. Define the \textbf{canonical character} $\beta: \mathbb{I}(\mbf{GP}_{\mathbb{N}}^+) \to \mathbb{F}[t]$ by
 \[
 \beta(\one_P) = \begin{cases}
(-1)^{|I|-\dim \textrm{Lin}(P)} t^{p} & \text{if $P$ is relatively bounded and lies on hyperplane $\displaystyle \sum_{i \in I} x_i = p$ in $\R^I$,} \\ 
 0 & \text{if $P$ is relatively unbounded},
 \end{cases} 
 \]
 for the indicator function of a polyhedron $P$, where Lin$(P)$ is the lineality space of $P$ and where a face $F$ of $P$ is \textbf{relatively bounded} if it is non-empty and $F/\text{Lin}(P)$ is a bounded face of $P/\text{Lin}(P)$ and relatively unbounded otherwise.

\begin{maintheorem}\label{mainthm:universal}
The terminal Hopf monoid with a polynomial character is $(\mathbb{I}(\mbf{GP}_{\mathbb{N}}^+), \beta)$.

\noindent Explicitly:
For any connected Hopf monoid $\mbf{H}$ and any polynomial character $\zeta$,  there exists a unique Hopf morphism $\hat{\zeta}: \mbf{H} \to \mathbb{I}(\mbf{GP}_{\mathbb{N}}^+)$ such that $\beta \circ \hat{\zeta} = \zeta$.
\end{maintheorem}

It also follows from these general results that the indicator Hopf monoid is cofree. This is shown in Theorem \ref{thm:cofree}.

Similarly, the terminal Hopf monoid with a character is the quotient $\mbf{GP}^+/\mbf{Mc}^+$ with the canonical character; 
see Theorem \ref{thm: universality of Sigma}.
Aguiar and Mahajan \cite{AM10} had proved this property for $\mbf{\Sigma^*}$, which we show is isomorphic to $\mbf{GP}^+/\mbf{Mc}^+$ in Theorem \ref{thm:McMullen}.

\bigskip
\noindent \textsf{\textbf{Hopf algebraic valuations on polytopes.}}
Theorem \ref{mainthm1} shows the compatibility between the Hopf structure on generalized permutahedra and the valuative functions on these polytopes. Many functions on generalized permutahedra can be seen as functions on the Hopf monoid $\mbf{GP}^+$, which descend to functions on the quotient Hopf monoid $\mathbb{I}(\mbf{GP}^+)$. Those functions must then be valuations. The same is true for submonoids of $\mbf{GP}^+$. 
The following is one concrete manifestation of this general principle:

\begin{maintheorem}\label{mainthm:vals} Let $\tb{H}$ be a Hopf submonoid of $\tb{GP}^+$. Let $S_1 \sqcup \cdots \sqcup S_k = I$ be a set decomposition and consider functions $f_i: \tb{H}[S_i] \to R$ for $1 \leq i \leq k$, where $R$ is a ring with multiplication $m$. 
Define the function $f_1 \star \cdots \star f_k : \tb{H}[I] \to R$ by $f_1\star \cdots \star  f_k := m \circ f_1 \otimes f_2 \otimes \cdots \otimes f_k \circ \Delta_{S_1,\ldots,S_k}$.
\[
\text{If } f_1, \ldots, f_k \text{ are strong valuations, then } f_1 \star \cdots \star f_k \text{ is a strong valuation.}
\]
\end{maintheorem}

Many new and known valuations on subfamilies of generalized permutahedra arise from applying Theorem \ref{mainthm:vals} to much simpler valuations $f_i$. The earlier proofs of those results, often quite subtle, are thus replaced by a uniform, straightforward computation. This applies to the following valuations.

\renewcommand{\arraystretch}{1.5}
\begin{table}[H]
    \centering
    \begin{tabular}{|p{0.28\linewidth}|p{0.26\linewidth}|p{0.41\linewidth}|}
        \hline
        Submonoid & Valuations $f_i$ & Valuation from Theorem B \\
        \hline
        Generalized permutahedra (Section \ref{sec:valGP})  & normalized volume & Exponential of the class in the Chow ring of the permutahedral variety \cite{FS94} \\
         & universal norm & Dupont, Fink, and Moci's universal Tutte character \cite{DFM17} \\
         Matroid morphisms & universal norm & Las Vergnas's Tutte polynomial \cite{Ver80}\\
        \hline
        Matroids (Section \ref{sec:valM})& beta invariant& Chern-Schwartz-MacPherson cycles \cite{LRS}\\
         &  characteristic polynomial & Eur's volume polynomial \cite{Eur20} \\
         &  characteristic polynomial & Kazhdan--Lusztig polynomial \cite{EPW} \\
         &  characteristic polynomial & motivic zeta function \cite{JKU19} \\   
         & having a unique basis & Billera, Jia, and Reiner's quasisymmetric function \cite{BJR09}\\
         & having only one element & Derksen-Fink invariant \cite{DF10}\\
         & universal norm & Tutte and characteristic polynomial \cite{Tutte67} \\
\hline
        Posets (Section \ref{sec:valP}) & being an antichain & Stanley's order polynomial \cite{Stanley70}
         \\
         & being an antichain, 1 & Gordon's Tutte polynomial  \cite{Gor93} \\
         & being an antichain & Dorpalen-Barry, Kim, and Reiner's Poincar\'e polynomial  \cite{DBKR19} \\
\hline
        Nestohedra (Section \ref{sec:valBS}) & constant function & $f$-polynomial \cite{PRW06} \\
        \hline
    \end{tabular}
    \caption{Examples of valuations from Theorem  \ref{mainthm:vals}}
    \label{table:valuations}
\end{table}

The character theory of Hopf monoids provides an especially useful corollary to Theorem \ref{mainthm:vals}.
It is explained in \cite{AA17, ABS06} that a multiplicative function from a Hopf monoid $\mbf{H}$ to a fixed field, known as a \textbf{character}, gives rise to a family of polynomials $f_\zeta(h)$, quasisymmetric functions $\Phi_\zeta(h)$ and linear combinations of ordered set partitions $O_\zeta(h)$ associated to each object $h$ of the Hopf monoid $\mbf{H}$. Examples include the order polynomial of a poset, the chromatic polynomial of a graph, and the Billera-Jia-Reiner quasisymmetric function of a matroid. When the character is also a valuation, we can say more.


\begin{corollary}\label{cor: character theory valuation} Let $\mbf{H}$ be a submonoid of $\mbf{GP}$. Let $\zeta$ be a character of $\mbf{H}$ such that $\zeta[I]$ is a strong valuation. Then the three maps 
\[
    h \mapsto f_{\zeta}(h)(t) \qquad
    h \mapsto \Phi_{\zeta}(h) \qquad 
        h \mapsto O_{\zeta}(h) \qquad  \text{ for } h \in \mbf{H}[I]
\]
are all strong valuations.
\end{corollary}

The multiplicative functions from $\mbf{H}$ to a fixed field, known as the characters of $\mbf{H}$, form a group $\mathbb{X}(\mbf{H})$ under convolution. The inverse of a valuative character is given by precomposing it with the antipode \cite{AA17, ABS06}. The above results give an interesting structural consequence, shown in Proposition \ref{prop:valchar}: the characters of $\mbf{H}$ that are valuative form a subgroup $\mathbb{X}(\mbf{H})^{val} \subseteq \mathbb{X}(\mbf{H})$ of the character group.

\subsection{Related work} \label{sec:relatedwork}

A Hopf algebra analog of Part 2 of Theorem \ref{mainthm1} was proved by Derksen and Fink \cite{DF10}. By working in the context of Hopf monoids and taking a more geometric approach, we are able to obtain several new consequences, including numerous results in \cite{AA17}, the simple formula for the antipode in Corollary \ref{cor:main}, and many new examples of valuations.

Results analogous to Parts 1 and 2 of Theorem \ref{mainthm1} and Corollary \ref{cor:main} were also obtained independently and simultaneously by Bastidas \cite{Bastidas20}, for the quotient $\mbf{GP}/\mbf{Mc}$ of the Hopf monoid of \textbf{bounded} generalized permutahedra by its McMullen subspecies.
The quotients $\mbf{GP}/\mbf{Mc}$
and $\mbf{GP}^+/\mbf{Mc}^+$ are very different from each other; in fact, all the bounded polytopes on a fixed ground set are identified in the quotient $\mbf{GP}^+/\mbf{Mc}^+$; see Proposition \ref{prop:psi}.
By including unbounded polyhedra, we obtain a structure that is more favorable for our purposes: the resulting indicator Hopf monoid 
$\mathbb{I}(\mbf{GP}^+) =  \mbf{GP}^+/\mbf{ie}$ 
is isomorphic to the Hopf monoid on weighted set partitions, is cofree, and is the terminal object in the category of Hopf monoids with a(n extended) polynomial character. 
Its quotient $\mbf{GP}^+/\mbf{Mc}^+$ is isomorphic to the Hopf monoid on  set partitions, is cofree, and is the terminal object in the category of Hopf monoids with a character.

\subsection{Outline}

In Section \ref{sec:background}, we introduce the relevant background for generalized permutahedra and Hopf monoids. We give many examples of Hopf monoids and of combinatorial objects that can be associated to generalized permutahedra. In Section \ref{sec:morphisms}, we construct the Brianchon-Gram Hopf morphism for polytopes and the aligning morphism for posets, which play an important role in our work. One is related to the Brianchon-Gram formula and the other describes cones in terms of ordered set partitions. In Section \ref{sec:indicator}, we prove Theorem \ref{mainthm1} on the existence of the indicator Hopf monoid $\mathbb{I}(\mathbf{GP}^+)$ and its quotient $\mathbf{GP}^+/\mbf{Mc}^+$. In Section \ref{sec:universal} we prove that these Hopf monoids are cofree and they are the terminal Hopf monoids with a (generalized polynomial) character, Theorem \ref{mainthm:universal}. 
In Section \ref{sec:valuations}, we prove Theorem \ref{mainthm:vals} and we use it to show that various Hopf monoidal constructions give rise to valuations on polytopes.

The remaining sections focus on some known and many new examples, as summarized in Table \ref{table:valuations}. Sections \ref{sec:valGP}, \ref{sec:valM}, \ref{sec:valP}, \ref{sec:valBS} focus on valuations on generalized permutahedra, matroids, posets, and building sets, respectively.  In Section \ref{sec:valBS}, we use this 
 to show that there are no nestohedral subdivisions.
We close with Appendix \ref{sec:appendix} where we summarize the main facts we need about Hopf monoids and prove the First Isomorphism Theorem for them.



\section{Background}\label{sec:background}

\subsection{Generalized permutahedra}

For a set $I$ of size $n$, the standard \textbf{permutahedron} $\pi_I$ is the convex hull of the $n!$ bijective functions $\pi: I \rightarrow [n]$.  We are interested in the deformations of the permutahedron, which are defined as follows. 
A \textbf{generalized permutahedron} is a polytope in $\R^I$ that satisfies the following equivalent conditions:

$\bullet$ Its edges are parallel to vectors in the root system $A_I=\{e_i - e_j \, | \, i, j \in I\}$, where $\{e_i \, : \, i \in I\}$ are the standard basis vectors.

$\bullet$ Its normal fan is a coarsening of the \textbf{braid arrangement} $\Sigma_I$ which is the hyperplane arrangement in $\R^I$ given by the hyperplanes $H_{i,j} = \{x \in \R^I \; \lvert \; x_i = x_j\}$ for $i, j \in I$.

$\bullet$ It is obtained from the standard permutahedron $\pi_I$ by moving the facets while preserving their directions, without letting a facet cross a vertex.

$\bullet$ It is given by the inequality description
\[
P = \{x \in \R^I \mid  \sum_{i\in I}x_i=z(I) \, \text{ and } \sum_{i\in A}x_i\leq z(A) \text{ for all }A\subseteq I\}
\]
for a function $z:2^I \rightarrow \R$ that is \textbf{submodular}; that is, it satisfies $z(A) + z(B) \geq z(A\cup B)+z(A\cap B)$ for all $A, B \subseteq I$.

\begin{figure}[!h]
\centering
\begin{center}
\includegraphics[scale=.4]{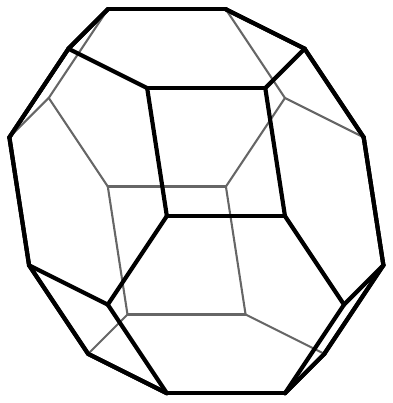} \qquad
\includegraphics[scale=.4]{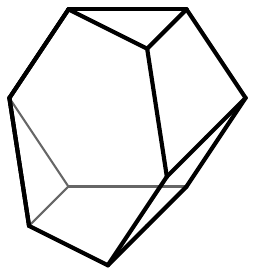}  \quad
\includegraphics[scale=.4]{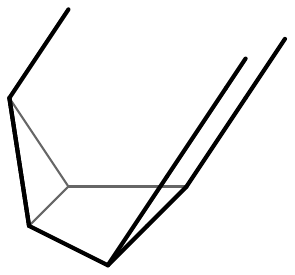} \quad
\includegraphics[scale=.4]{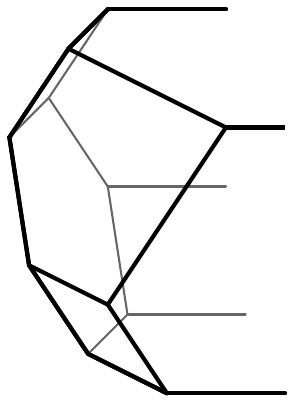} \quad
\includegraphics[scale=.4]{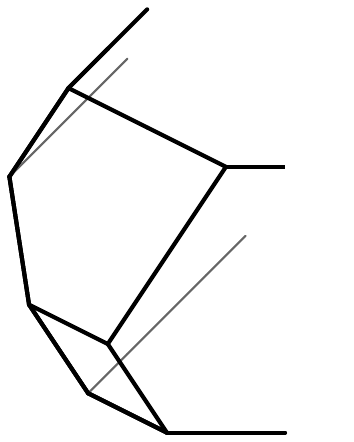}
\end{center}
\caption{The standard $3$-permutahedron and four other extended generalized permutahedra. \label{f:genperm}}
\end{figure}

\smallskip

More generally, an \textbf{extended generalized permutahedron} is a possibly unbounded polyhedron in $\R^I$ that satisfies the following equivalent conditions:

$\bullet$ Its faces lie on translates of subspaces spanned by roots in $A_I$.

$\bullet$ Its normal fan is a coarsening of a subfan of the braid fan.

$\bullet$ It is obtained from the permutahedron by moving the facets while preserving their directions, without letting a facet cross a vertex, possibly sending some facets to infinity.

$\bullet$ It is given by the inequality description
\[
P = \{x\in\R^I \mid  \sum_{i\in I}x_i=z(I) \, \text{ and } \sum_{i\in A}x_i\leq z(A) \text{ for all }A\subseteq I\}
\]
for a function $z:2^I \rightarrow \R \cup \{\infty\}$ that is submodular; that is, it satisfies $z(A) + z(B) \geq z(A\cup B)+z(A\cap B)$ for all $A, B \subseteq I$ such that $z(A)$ and $z(B)$ are finite.

\medskip

This ubiquitous family of polytopes was first studied in optimization under the name of polymatroids \cite{edmonds70, Fujishige}. Its combinatorial structure was studied in \cite{postnikov09} and its algebraic structure was studied in \cite{AA17}. Generalized permutahedra arise naturally in optimization (where they parameterize problems where the greedy algorithm successfully finds a solution \cite{Schrijver}), in algebraic geometry (where they are in correspondence with the numerically effective divisors of the permutahedral toric variety $X_{\Sigma_I}$ \cite{toricvarietiesbook}), and in algebra (where they describe the irreducible representations of the Lie algebra ${sl}_n$ \cite{FultonHarris}, and  they are the largest family of polytopes that carries the structure of a Hopf monoid \cite{AA17}.)

Generalized permutahedra are also of great importance in combinatorics, because they provide geometric models of many important combinatorial families: graphs, matroids, posets, preposets, ordered set partitions, hypergraphs, simplicial complexes, and building sets, among others. Furthermore, Aguiar and Ardila showed that the well-studied Hopf structures on these and other combinatorial families can all be unified within the framework of the Hopf monoid $\mbf{GP}^+$.

%

\bigskip
\noindent \textbf{\textsf{Matroids.}}
A \textbf{matroid $M$ on ground set $I$} is a nonempty collection $\mathcal{B}(M)$ of subsets of $I$ called \textbf{bases} that satisfy the \emph{basis exchange axiom}: if $A$ and $B$ are bases and $a \in A - B$, there exists $b \in B-A$ such that $(A - \{a\}) \cup \{b\}$ is a basis.

An \textbf{independent set} is a subset of a basis. 
The \textbf{rank function} of a matroid $M$ is the function $r:2^I \rightarrow \N$ given by
\[
r(J) = \max_{B \in \mathcal{B}(M)} |J \cap B| \qquad \textrm{ for } \emptyset \subseteq J \subseteq I.
\]
This is the size of any maximal subset of $J$ that is independent.

The \textbf{matroid polytope} $P(M)$ of a matroid $M$ on $I$ is given by
\[
 P(M) = \textrm{conv} \, \{e_{b_1} + \cdots + e_{b_r}  \mid  \{b_1, \ldots, b_r\} \textrm{ is a basis of } M\} \subseteq\R^I.
 \]
It is a generalized permutahedron \cite{ArdilaBenedettiDoker} whose vertices correspond to the bases and whose edges correspond to the elementary basis exchanges between them.

\bigskip
\noindent \textbf{\textsf{Posets and preposets.}} A \textbf{poset} or \textbf{partially ordered set} $p$ on a finite set $I$ is a relation $p \subseteq I \times I $, denoted $\leq$ or $(I, \leq)$, which is reflexive ($x \leq x$ for all $x \in I$), antisymmetric ($x \leq y$ and $y \leq x$ imply $x = y$ for all $x,y \in I$), and transitive ($x \leq y$ and $y \leq z$ imply $x \leq z$  for all $x,y,z\in I$).

More generally, a \textbf{preposet} on a finite set $I$ is a relation $q \subseteq I \times I $, denoted $\leq$ or $(I, \leq)$, that is reflexive and transitive, but not necessarily antisymmetric. We write $x<y$ if $x \leq y$ and $x \ngeq y$.

A preposet $q$ gives rise to an equivalence relation given by $x \sim y$ if $x \leq y$ and $y \leq x$, and to a poset $p = q/\sim$ on the equivalence classes of $\sim$ where $[x] \leq_p [y]$ if and only if $x \leq_q y$. Since we can recover the preposet $q$ from the poset $p$, we will identify the preposet $q=(I, \leq_q)$ with the poset $p=(I/\sim, \leq_p)$. The \textbf{size} of $q$ is the number of equivalence classes, $|q| := |I/\sim|$.

A \textbf{weighted preposet} $(w,q)$ consists of a preposet $q$ and a function $w:q/\sim \,  \rightarrow \R$; that is, a choice of a real weight $w(q_a)$ for each equivalence class $q_a$ of the preposet $q$.

The \textbf{(pre)poset cone} of a (pre)poset $q$ is
\[
\operatorname{cone}(q) = \operatorname{cone} \{e_i - e_j \, : \, i \geq j \textrm{ in } q\}.
\]
This is an extended generalized permutahedron, and its lineality space is $(|I|-k)$-dimensional where $q / \sim \,\, = \{q_1, \ldots, q_k\}$; it is cut out by the $k$ independent equations 
$
\sum_{i \in q_a} x_i = 0  \textrm{ for } 1 \leq a \leq k,
$
one for each equivalence class of $q$. 
\footnote{These cones are related to the preposet-cone dictionary given by \cite{PRW06}. For any (pre)poset $p$ on $I$, let $\sigma_p$ denote the cone
$\sigma_p = \{x \in \R^I \; \lvert \; x_i \geq x_j \text{ for all $i \geq_p j$}\}$. Then, the poset cone $\operatorname{cone}(p)$ is the dual cone to the cone $\sigma_p$.}

The \textbf{translated (pre)poset cone} of a weighted (pre)poset $(w,q)$ is
\begin{equation} \label{eq:P(w,F)}
\operatorname{cone}(w, q) = w^q + \operatorname{cone}(q)
\end{equation}
where $w^q$ is a vector in $\R^I$ such that $\sum_{i \in q_a} x_i = w(q_a)$ for each equivalence class $q_a$ of $q$. Any such vector $w^q$ will produce the same cone $\operatorname{cone}(w,q)$ thanks to the description of the lineality space of $\operatorname{cone}(q)$ given above. 

Translated preposet cones are precisely the cones that are extended generalized permutahedra \cite[Theorem 3.4.9]{AA17}.

\bigskip
\noindent \textbf{\textsf{Weighted ordered set partitions and plates.}} Ordered set partitions are of fundamental importance in the theory of Hopf monoids, and weighted ordered set partitions will play a central role in this project.

\begin{definition}\label{def:setcomp}
An \textbf{ordered set partition} (or \textbf{set composition}) of a finite set $I$ is an ordered sequence $\ell=\ell_1|\cdots|\ell_k$ of nonempty,  pairwise disjoint sets such that $\ell_1 \sqcup \cdots \sqcup \ell_k = I$. 

A \textbf{set decomposition} of $I$ is an ordered sequence $S_1|S_2|\cdots| S_k$ of possibly empty, pairwise disjoint sets such that $S_1 \sqcup \cdots \sqcup S_k = I$.
\end{definition}

The ordered set partitions of $I$ are in bijection with the faces of the braid arrangement in $\R^I$. They are also in bijection with the \textbf{totally ordered preposets}, where every pair of elements is comparable: the ordered set partition $\ell=\ell_1|\cdots|\ell_k$ corresponds to the preposet $q_\ell$ where $i \leq j$ for $i \in \ell_a, j \in \ell_b$ with $a \leq b$. This preposet is equivalent to the linear poset $\ell_1 < \cdots < \ell_k$ on its equivalence classes.

A \textbf{(pre)linear extension} $\ell$ of a (pre)poset $q$ is a totally ordered (pre)poset $\ell$ 
such that $x < y$ in $q$ implies $x < y$ in $\ell$ and $x \leq y$ in $q$ implies $x \leq y$ in $\ell$\footnote{This second condition ensures that an equivalence class in $q$ cannot be split into smaller equivalence classes in a linear extension of $q$.} We also think of $\ell$ as the associated ordered set partition $\ell$.

\begin{definition}\label{def:wsetcomp}
A \textbf{weighted ordered set partition} of $I$ is a pair $(w,\ell)$ consisting of an ordered set partition $\ell=\ell_1|\cdots|\ell_k$ of $I$ and an assignment $w:\{\ell_1, \ldots, \ell_k\} \rightarrow \R$ of a real weight $w(\ell_a)$ for each part $\ell_a$ of $\ell$. We also write $w=(w_1, \ldots, w_k)$.
\end{definition}

The following cones are in bijection with weighted ordered set partitions.

\begin{definition} \label{def:plate}
A \textbf{plate} is a cone of the form
\begin{eqnarray*}
\operatorname{cone}(w, \ell) 
&=& \{ x \in \R^I \, : \, x_{\ell_1} \geq w_1, \,\,
\ldots, \,\, 
x_{\ell_1 \sqcup \cdots \sqcup \ell_{k-1}} \geq w_1+\cdots + w_{k-1}, \,\,
x_{\ell_1 \sqcup \cdots \sqcup \ell_{k}} = w_1+\cdots + w_{k}\} \\
&=& w^\ell + \operatorname{cone} \{e_i - e_j \, : \, i \in \ell_a, j \in \ell_b, a \geq b\}
\end{eqnarray*}
%
for some weighted ordered set partition $(w,\ell)$, where 
$x_\ell := \sum_{l \in \ell} x_l$, and $w^\ell$ is any vector in $\R^I$ such that $\sum_{i \in \ell_a} (w^\ell)_i = w(\ell_a)$ for $1 \leq a \leq k$. 
If $w=0$ then the plate is called \textbf{centered}.
\end{definition}

These cones arise in numerous contexts. In this terminology, plates (also called \textbf{permutahedral plates} or \textbf{tectonic plates}) were introduced by Ocneanu \cite{Ocneanu} and studied by Early \cite{Early}. 
If we regard the (weighted) ordered set partition $\ell$ as a (weighted) preposet $q_\ell$, then the (weighted) plate of $\ell$ coincides with the (weighted) preposet cone of  $q_\ell$.

\subsection{Hopf monoids}

Hopf monoids are counterparts of Hopf algebras that are especially well-suited for combinatorial analysis. 
There are four natural functors from Hopf monoids to Hopf algebras, so everything that we do in this paper can also be done at the level of Hopf algebras. 

Although the formal definition of a Hopf monoid is technical, the intuition is simple.
We begin by giving an informal description of a Hopf monoid. For a precise definition, see the Appendix in Section \ref{sec:appendix}. 
For a combinatorial discussion and ``user's manual", see \cite{AA17}. For a thorough algebraic treatment, see the original monograph \cite{AM10} by Aguiar and Mahajan where these objects are introduced. 
We will give many examples in Section \ref{sec:examples}. 
%

\medskip

A \textbf{Hopf monoid} $\mathbf{H}$ consists of the following data, subject to some suitable axioms:
\begin{enumerate}
\item
A vector space $\mathbf{H}[I]$ for each finite set $I$ and an isomorphism from 
$\mathbf{H}[I]$ to $\mathbf{H}[J]$ for each bijection from $I$ to $J$. \smallskip
\\
(In many examples, a basis for $\mathbf{H}[I]$ is given by the different ``$H$-structures" that can be put on the ``ground set" $I$, and the isomorphisms are given by the natural maps obtained from relabeling the ground set.)
\item Compatible operations:
\begin{itemize}
\item
An associative \textbf{product} $m_{S,T}: \mathbf{H}[S] \otimes \mathbf{H}[T] \rightarrow \mathbf{H}[I]$ for each decomposition $I = S \sqcup T$.  \smallskip \\ 
(In many examples, this is given by a combinatorial rule to merge two $H$-structures on $S$ and $T$ into one $H$-structure on $I$.)
\item
A coassociative \textbf{coproduct} $\Delta_{S,T}: \mathbf{H}[I] \rightarrow \mathbf{H}[S] \otimes\mathbf{H}[T] $ for each decomposition $I = S \sqcup T$. \smallskip
 \\
(In many examples, this is given by a combinatorial rule to break one $H$-structure on $I$ into two $H$-structures on $S$ and $T$.) 
\item
An \textbf{antipode} $\mathrm{s}_I: \mathbf{H}[I] \rightarrow \mathbf{H}[I]$ for each finite set $I$. \smallskip
 \\
(This is given by an alternating sum of combinatorial objects, with many cancellations that are usually highly non-trivial and combinatorially interesting.)
\end{itemize}
\end{enumerate}

\noindent 
A \textbf{Hopf ideal} $\mbf{g} \subset \mbf{H}$ is a Hopf submonoid that satisfies:
 \begin{eqnarray*}
 m_{S,T}(\mbf{H}[S] \otimes \mbf{g}[T] +  \mbf{g}[S] \otimes \mbf{H}[T]) &\subset& \mbf{g}[I], \\
 \Delta_{S, T}(\mbf{g}[I]) &\subset& \tb{H}[S] \otimes \tb{g}[T] + \tb{g}[S] \otimes \tb{H}[T].
 \end{eqnarray*}
If $\mbf{g}$ is a Hopf ideal, then one can define a \textbf{quotient Hopf monoid} in the natural way. 
 
There is a natural notion of morphisms of Hopf monoids. It satisfies Noether's First Isomorphism Theorem in the following formulation.

 \begin{theorem}\label{th:isothm1} Let $\mbf{H}_1$ and $\mbf{H}_2$ be two Hopf monoids and $\alpha: \mbf{H}_1 \to \mbf{H}_2$ be a Hopf morphism. Then, the image of $\alpha$ is a Hopf submonoid of $\mbf{H}_2$, the kernel of $\alpha$ is a Hopf ideal of $\mbf{H}_1$ and we have the isomorphism of Hopf monoids
\[
\mbf{H}_1/\ker(\alpha) \cong \im(\alpha).
\]
\end{theorem}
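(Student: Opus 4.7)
The strategy is to verify the three claims in sequence, mimicking the classical proof for Hopf algebras but at the level of species. Throughout, $\alpha$ being a Hopf morphism means that for every finite set $I$ we have a linear map $\alpha[I]$ compatible with the multiplication $m_{S,T}$, the comultiplication $\Delta_{S,T}$, and the species structure (i.e. bijections).

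First I would show that $\im(\alpha)$ is a subHopf monoid. Define $\im(\alpha)[I] := \alpha[I](\mbf{H}_1[I]) \subseteq \mbf{H}_2[I]$; this is a subspecies since bijections $f: I \to J$ commute with $\alpha$. Given $\alpha(x) \in \im(\alpha)[S]$ and $\alpha(y) \in \im(\alpha)[T]$, the monoid morphism property gives $m_{S,T}(\alpha(x) \otimes \alpha(y)) = \alpha(m_{S,T}(x \otimes y)) \in \im(\alpha)[I]$. Similarly, $\Delta_{S,T}(\alpha(x)) = (\alpha[S] \otimes \alpha[T])(\Delta_{S,T}(x)) \in \im(\alpha)[S] \otimes \im(\alpha)[T]$. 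Since the antipode of a connected Hopf monoid is determined by the (co)monoid structure via the Takeuchi formula $s[I](x) = \sum (-1)^k m \circ \Delta$, it follows automatically that $\im(\alpha)$ is closed under the antipode.

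Next I would show $\ker(\alpha)$ is a Hopf ideal. Set $\ker(\alpha)[I] := \ker(\alpha[I])$; again this is a subspecies by naturality. The ideal condition is immediate: if $x \in \ker(\alpha)[S]$ and $y \in \mbf{H}_1[T]$, then $\alpha(m_{S,T}(x \otimes y)) = m_{S,T}(0 \otimes \alpha(y)) = 0$, and analogously on the other side. The coideal condition is the main obstacle. I need to show that for $x \in \ker(\alpha)[I]$,
\[
\Delta_{S,T}(x) \in \ker(\alpha)[S] \otimes \mbf{H}_1[T] + \mbf{H}_1[S] \otimes \ker(\alpha)[T].
\]
From $\alpha(x) = 0$ and the comonoid compatibility of $\alpha$ we obtain $(\alpha[S] \otimes \alpha[T])(\Delta_{S,T}(x)) = 0$, so $\Delta_{S,T}(x) \in \ker(\alpha[S] \otimes \alpha[T])$. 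The crucial input is the linear algebra lemma that for linear maps $f_i: V_i \to W_i$,
\[
\ker(f_1 \otimes f_2) = \ker(f_1) \otimes V_2 + V_1 \otimes \ker(f_2),
\]
which I would prove by choosing complementary subspaces $V_i = \ker(f_i) \oplus U_i$, noting that $f_1|_{U_1} \otimes f_2|_{U_2}$ is injective as a tensor of injective maps, and decomposing any element accordingly. Applied to $f_i = \alpha[S], \alpha[T]$, this gives the desired coideal inclusion.

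Finally, for part (3), I would invoke the universal property of the quotient: define $\bar\alpha: \mbf{H}_1/\ker(\alpha) \to \im(\alpha)$ by $\bar\alpha[I]([x]) := \alpha[I](x)$. Well-definedness and injectivity are immediate from the definition of the kernel, and surjectivity from the definition of the image. Compatibility of $\bar\alpha$ with the monoid, comonoid, and hence Hopf structures follows because the quotient structures on $\mbf{H}_1/\ker(\alpha)$ were defined precisely so that the projection $\mbf{H}_1 \to \mbf{H}_1/\ker(\alpha)$ is a Hopf morphism, and the same is true for the inclusion $\im(\alpha) \hookrightarrow \mbf{H}_2$, so $\bar\alpha$ inherits these properties from $\alpha$. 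The hardest step, as noted, is the coideal verification; everything else is a direct translation of the corresponding classical arguments to the species setting.
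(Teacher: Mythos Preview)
The paper does not actually prove this theorem; it is stated as a known fact (``Noether's First Isomorphism Theorem holds for Hopf monoids in the following formulation'') and then used freely in Section~4. So there is nothing to compare against on the paper's side.

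Your argument is correct. The only genuinely nontrivial step is the coideal verification for $\ker(\alpha)$, and you handle it properly: the identity $\ker(f_1\otimes f_2)=\ker(f_1)\otimes V_2 + V_1\otimes\ker(f_2)$ holds because we are working over a field, and your splitting argument is the standard way to see it. Your observation that closure under the antipode is automatic from the Takeuchi formula is also the right way to avoid a separate antipode check, and it matches how the paper treats the antipode (as derived from the bimonoid structure). One small remark: when you write $\Delta_{S,T}(\alpha(x))\in\im(\alpha)[S]\otimes\im(\alpha)[T]$, you are implicitly using that the image of a tensor product of linear maps equals the tensor product of the images, which again relies on working over a field; this is fine here but worth being aware of.
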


\begin{proof}
See the Appendix.
\end{proof}

\subsection{Examples of Hopf monoids}\label{sec:examples}

Although it is not clear a priori from the definitions, most of the Hopf monoids that will appear in this paper are closely related to the following Hopf monoid of generalized permutahedra.

\bigskip
\noindent \textbf{\textsf{Generalized permutahedra.}} 
Let $P$ be a(n extended) generalized permutahedron in $\R^I$ and let $I = S \sqcup T$ be a decomposition. Let $e_{S,T}$ denote the linear functional $e_{S,T}(x) = \sum_{i \in S} x_i$. Let $P_{S,T}$ denote the face of $P$ maximized by the linear functional $e_{S,T}$. If this face is nonempty, then there exist two (extended) generalized permutahedra $P|_S \subset \R^S$ and $P/_S \subset R^T$ such that $P_{S,T} = P|_S \times P/_S$.

If $P \subset \R^S$ and $Q \subset \R^T$ are (extended) generalized permutahedra, then $P \times Q$ is a(n extended) generalized permutahedron in $\R^{I}$.

Now let $\mbf{GP}^+$ be the species given by 
\[
\mbf{GP}^+[I] = \F\{ \text{extended generalized permutahedra in } \R^I\}.
\]
A bijection from $I$ to $J$ induces a vector space isomorphism from $\R^I$ to $\R^J$,  which induces an isomorphism from $\mbf{GP}^+[I]$ to $\mbf{GP}^+[J]$; so this is indeed a species. To simplify (and slightly abuse) notation, we will write $P \in \mbf{GP}^+[I]$ whenever $P$ is an extended generalized permutahedron in $\R^I$.

\begin{definition} \cite{AA17}
The \textbf{Hopf monoid of (extended) generalized permutahedra} $\mbf{GP}^+$ is the species 
$\mbf{GP}^+[I] = \F\{\textrm{extended generalized permutahedra in } \R^I\}$ with product
 \[
 m_{S,T}(P,Q) = P \times Q \qquad \textrm{ for } P \in \mbf{GP}^+[S], Q \in \mbf{GP}^+[T]
 \]
and coproduct
 \[
 \Delta_{S,T}(P) = 
 \begin{cases} 
 P|_S \otimes P/_S & \text{ if $P$ is bounded in direction $e_{S,T}$ , or} \\
0 & \text{ otherwise},
\end{cases}
\qquad
\textrm{ for } P \in \mbf{GP}^+[I].
\]
\end{definition}

The Hopf monoid $\mbf{GP}^+$, its Hopf submonoids, and quotient Hopf monoids are our main algebraic objects of study. The following submonoids of $\mbf{GP}^+$ will play a role in what follows; see Theorem \ref{thm:CGP}:

\noindent
$\bullet$ the Hopf monoid $\mbf{GP}^+$ of extended generalized permutahedra,

\noindent
$\bullet$ the Hopf monoid $\mbf{GP}$ of bounded generalized permutahedra,

\noindent
$\bullet$ the Hopf monoid $\mbf{CGP}^+$ of conical generalized permutahedra,  

\noindent
$\bullet$ the Hopf monoid $\mbf{PCGP}^+$ of conical generalized permutahedra that are pointed,

\noindent
$\bullet$ the Hopf monoid $\mbf{CGP}_0^+$ of conical generalized permutahedra where the origin is in the apex,

\noindent
$\bullet$ the Hopf monoid $\mbf{PCGP}_0^+$ of conical generalized permutahedra that are pointed at the origin.

\noindent
Here the apex of a cone is its lineality space.

\bigskip
\noindent \textbf{\textsf{Matroids.}}
Consider a matroid $M$ on $I$ and a decomposition $I=S\sqcup T$. The \textbf{restriction} of $M$ to $S$ is the matroid $M|_S$ on ground set $S$ with 
\[
\mathcal{B}(M|_S) = \{\text{maximal intersections of the form } B \cap S \, | \,  B\in \mathcal{B}(M)\}.
\]
The \textbf{contraction} of $S$ from $M$ is the matroid on ground set $T$ defined as
\[
\mathcal{B}(M/_S) = \{B_T \subseteq T \mid \text{for a basis $B_S$ of $M|_S$ we have $B_S \cup B_T \in \mathcal{B}(M)$}\}.
\]
Let $M_1$ and $M_2$ be matroids on ground set 
$S$ and $T$, respectively, and $I=S\sqcup T$. Their \textbf{direct sum} is the matroid on ground set $I$ defined as
 \[
 M_1 \oplus M_2 = \{B_1 \cup B_2  \subseteq I\mid  B_1 \in \mathcal{B}(M_1), B_2 \in \mathcal{B}(M_2)\}.
 \]

The \textbf{Hopf monoid of matroids} $\mbf{M}$ is given by  $\mbf{M}[I] = \F\{ \text{matroids on $I$}\}$ where

\noindent $\bullet$ The product of $M_1\in\mbf{M}[S]$ and $M_2\in\mbf{M}[T]$ 
is their direct sum $M_1 \oplus M_2$.

\noindent $\bullet$  
The coproduct of $M \in \mbf{M}[I]$ is  $\Delta_{S,T}(M) = M|_S \otimes M/_S$.

\medskip

The map that sends a matroid $M$ to its matroid polytope $P(M)$ is an inclusion of Hopf monoids:

\begin{theorem}\cite{AA17} The Hopf monoid of matroids $\mathbf{M}$ is a submonoid of the Hopf monoid $\mbf{GP}$.
\end{theorem}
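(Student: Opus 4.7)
The plan is to exhibit the assignment $\Phi: \mbf{M} \to \mbf{GP}$ defined on each set $I$ by $\Phi[I](M) = P(M)$, extended linearly, and to verify it is a morphism of species, that its image lies in $\mbf{GP}$, and that it is compatible with both the product and the coproduct. Since distinct matroids have distinct sets of bases and hence distinct vertex sets, $\Phi[I]$ is injective; naturality under relabeling bijections $I \to J$ is immediate from the fact that a bijection of ground sets induces a bijection of bases that is compatible with the induced linear isomorphism $\R^I \to \R^J$. That $P(M) \in \mbf{GP}[I]$ is already recorded in the excerpt, citing \cite{ArdilaBenedettiDoker}.

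Next I would verify the monoid axiom. By definition of $M_1 \oplus M_2$, the bases are exactly the disjoint unions $B_1 \sqcup B_2$ with $B_i \in \mathcal{B}(M_i)$. Their indicator vectors in $\R^{S \sqcup T} = \R^S \oplus \R^T$ are $e_{B_1} + e_{B_2}$, so
\[
P(M_1 \oplus M_2) = \mathrm{conv}\{e_{B_1} + e_{B_2}\} = P(M_1) \times P(M_2) = m_{S,T}(P(M_1), P(M_2)),
\]
which is exactly $\Phi(m_{S,T}(M_1, M_2))$.

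The comonoid compatibility is the step that requires actual matroid theory, and I expect it to be the main obstacle. The task is to show that for $M \in \mbf{M}[I]$ and a decomposition $I = S \sqcup T$, the face of $P(M)$ maximized by the linear functional $e_{S,T}(x) = \sum_{i \in S} x_i$ equals $P(M|_S) \times P(M/_S)$. Evaluated at a vertex $e_B$ this functional gives $|B \cap S|$, so the maximizing vertices are those bases $B$ with $|B \cap S| = r_M(S)$, the rank of $S$ in $M$. Here I would invoke the standard matroid fact that the bases $B$ of $M$ with $|B \cap S|$ maximal are precisely those of the form $B = B_S \sqcup B_T$ where $B_S$ is a basis of $M|_S$ and $B_T$ is a basis of $M/_S$; this is the classical relationship between restriction, contraction and rank. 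Consequently the face of $P(M)$ maximized by $e_{S,T}$ has vertex set $\{e_{B_S} + e_{B_T}\}$ as $B_S, B_T$ range over bases of $M|_S$ and $M/_S$, which is exactly $P(M|_S) \times P(M/_S)$. Since $S$ is a finite set this face is always nonempty, so $P(M)$ is bounded in direction $e_{S,T}$ and
\[
\Delta_{S,T}(P(M)) = P(M|_S) \otimes P(M/_S) = \Phi[S](M|_S) \otimes \Phi[T](M/_S),
\]
which is the comonoid compatibility. Combining these three verifications gives that $\Phi$ is an injective Hopf morphism, realizing $\mbf{M}$ as a Hopf submonoid of $\mbf{GP}$.
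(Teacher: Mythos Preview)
Your argument is correct. The paper itself does not supply a proof of this statement: it is quoted from \cite{AA17}, and the surrounding text simply records that ``the map that sends a matroid $M$ to its matroid polytope $P(M)$ is an inclusion of Hopf monoids.'' What you have written is essentially the standard verification one finds in \cite{AA17}: direct sums go to Cartesian products, and the $e_{S,T}$-maximal face of $P(M)$ is $P(M|_S)\times P(M/_S)$ because the bases $B$ maximizing $|B\cap S|$ are exactly those decomposing as a basis of $M|_S$ together with a basis of $M/_S$. One cosmetic point: the reason the $e_{S,T}$-maximal face is nonempty is that $P(M)$ is a polytope (a bounded polyhedron), not merely that $S$ is finite; you may want to rephrase that sentence.
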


We will often use the following iterated coproduct formula for matroids.

\begin{lemma}\label{lemma: iterated coproduct matroids}\cite{AA17} Let $S_1 \sqcup \cdots \sqcup S_k = I$ be a set decomposition and let $F_i = S_1 \sqcup \cdots S_i$ for $0 \leq i \leq k$. Then for any matroid $M$ on $I$,
 \[
 \Delta_{S_1, \ldots, S_k}(M) = 
 M[F_0,F_1] \otimes M[F_1,F_2] \otimes \cdots \otimes M[F_{k-1},F_k],
  \]
  where $M[A,B] = (M|_B)/_A$ for $\emptyset \subseteq A \subseteq B \subseteq I$.
\end{lemma}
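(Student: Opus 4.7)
The plan is to prove the formula by induction on $k$, using coassociativity together with the standard matroid minor identity. The base case $k=1$ is trivial (both sides equal $M = M[F_0,F_1] = M[\emptyset, I]$), and the case $k=2$ is exactly the defining formula $\Delta_{S_1,S_2}(M) = M|_{S_1} \otimes M/_{S_1}$, since $M[\emptyset,S_1] = M|_{S_1}$ and $M[S_1, I] = M/_{S_1}$.

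For the inductive step, I would use coassociativity of the comonoid structure on $\mbf{M}$ to factor
\[
\Delta_{S_1,S_2,\ldots,S_k} = (\mathrm{id} \otimes \Delta_{S_2,\ldots,S_k}) \circ \Delta_{S_1, S_2 \sqcup \cdots \sqcup S_k}.
\]
Applying the $k=2$ formula gives $M|_{S_1} \otimes M/_{S_1}$, and then applying the induction hypothesis on the second tensor factor $M/_{S_1}$ (viewed as a matroid on $S_2 \sqcup \cdots \sqcup S_k$) yields tensor factors of the form $(M/_{S_1})[G_{i-1}, G_i]$, where $G_j = S_2 \sqcup \cdots \sqcup S_{j+1}$ is the analogous prefix inside the contracted ground set. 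Written out, $(M/_{S_1})[G_{i-1},G_i] = ((M/_{S_1})|_{G_i})/_{G_{i-1}}$.

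The heart of the argument is then to verify the matroid minor identity
\[
\bigl((M/_A)|_{B \setminus A}\bigr)/_{(C \setminus A)} = (M|_B)/_C
\qquad \text{for } A \subseteq C \subseteq B \subseteq I,
\]
which specializes, with $A = S_1$, $C = F_{i-1}$, $B = F_i$, to the identification
\[
\bigl((M/_{S_1})|_{G_i}\bigr)/_{G_{i-1}} = (M|_{F_i})/_{F_{i-1}} = M[F_{i-1}, F_i].
\]
This is a standard and short computation from the definition of restriction and contraction via bases (or via rank functions: $r_{M[A,B]}(X) = r_M(X \cup A) - r_M(A)$ for $X \subseteq B \setminus A$), and one checks it agrees on both sides by comparing bases. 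The main (minor) obstacle is really just the bookkeeping to line up the subsets $G_j \subseteq S_2 \sqcup \cdots \sqcup S_k$ with the original $F_j \subseteq I$ under the inclusion and the contraction; once that is set up correctly, the induction closes immediately. I would also note that the first factor $M|_{S_1}$ equals $M[F_0, F_1]$, so the two halves of the tensor product assemble into the claimed iterated formula.
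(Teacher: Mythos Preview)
The paper does not give its own proof of this lemma; it simply cites \cite{AA17}. Your induction on $k$ via coassociativity and the standard minor identity $((M/_A)|_{B\setminus A})/_{(C\setminus A)} = (M|_B)/_C$ is exactly the expected argument and is correct. One small bookkeeping slip: with your convention $G_j = S_2 \sqcup \cdots \sqcup S_{j+1}$ you have $G_j = F_{j+1}\setminus S_1$, so the factor $(M/_{S_1})[G_{i-1},G_i]$ matches $M[F_i,F_{i+1}]$, not $M[F_{i-1},F_i]$; either shift the index in the identification or set $G_j = S_2 \sqcup \cdots \sqcup S_j = F_j \setminus S_1$ instead, and the induction then closes exactly as you describe.
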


\bigskip
\noindent \textbf{\textsf{(Weighted) posets and preposets.}} 
For posets $p$ on $S$ and $q$ on $T$ let $p \sqcup q$ denote the disjoint union of the posets on $S \sqcup T$. For a poset $p$ on $I$ and $S \subset I$, let $p|_S$ be the poset restricted to the set $S$. We say that $S$ is a \textbf{lower ideal} of $p$ if for any $x \leq y$ we have that $y \in S$ implies that $x \in S$. We make the same definitions for preposets as well.

\medskip

The \textbf{Hopf monoid of (pre)posets} is the species $\mbf{(P)P}$ with product and coproduct
 \[ 
 m_{S,T}(p,q) = p \sqcup q, \qquad  \qquad\Delta_{S,T}(p) = \begin{cases}
    p|_S \otimes p|_T & \text{if $S$ is a lower ideal of $p$} \\
    0 & \text{otherwise}.
 \end{cases}
 \]
Similarly, the \textbf{Hopf monoid of weighted (pre)posets} $\mbf{w(P)P}$ has product and coproduct
 \[ 
 m_{S,T}((u,p),(v,q)) = ((u,v), p \sqcup q), \qquad \Delta_{S,T}(w,p) = \begin{cases}
    (w,p)|_S \otimes (w,p)|_T & \text{if $S$ is a lower ideal of $p$} \\
    0 & \text{otherwise}.
 \end{cases}
 \]

\begin{theorem}\label{thm:CGP}\cite{AA17} 
The maps 
 \[ 
\operatorname{cone}(p) = \operatorname{cone}(e_i - e_j \, | \,  i \geq_p j) , \qquad 
\operatorname{cone}(w,p) = w^p + \operatorname{cone}(p)
 \]
are isomorphisms of Hopf monoids
\[
\mbf{PP} \cong \mbf{CGP}_0^+, \qquad
\mbf{wPP} \to \mbf{CGP}^+, \qquad
\mbf{P} \cong \mbf{PCGP}_0^+, \qquad
\mbf{wP} \cong \mbf{PCGP}^+.
\]
\end{theorem}

The first part of this statement is \cite[Proposition 3.4.6]{AA17} while the others are simple modifications of it. We will sometimes identify (weighted) (pre)posets and their cones, and identify the Hopf monoids $\mbf{(w)(P)P}$ and $\mbf{(P)CGP_{(0)}}^+$, without saying so explicitly.

We have the following consequence.

\begin{corollary} \label{cor:proj}
The maps
\begin{eqnarray*}
\mbf{CGP}^+[I] &\rightarrow& \mbf{CGP_{0}}^+[I] \\
\operatorname{cone}(w,p) & \longmapsto & \operatorname{cone}(p)
\end{eqnarray*}
that shift the apex of a conical generalized permutahedron to the origin give morphisms of Hopf monoids $\mbf{CGP}^+ \rightarrow \mbf{CGP_{0}}^+$ and $\mbf{PCGP}^+ \rightarrow \mbf{PCGP_{0}}^+$.
\end{corollary}

\bigskip
\noindent \textbf{\textsf{Ordered set partitions.}} 
If $\ell=\ell_1| \cdots | \ell_k$ is an ordered set partition of $I$ and $S$ is a subset of $I$, then the \textbf{restriction} $\ell|_S$ of $\ell$ to $S$ is obtained from $(\ell_1 \cap S) | \cdots | (\ell_k \cap S)$ by removing all empty blocks. For example,
\[
(149|278|6|35) \, |_{\{1,2,3,4\}} = 14|2|3.
\]
Say an ordered set partition $n$ on $I$ is a \textbf{quasi-shuffle} of ordered set partitions $\ell$ and $m$ on $S$ and $T$, respectively, if $\ell=n|_S$ and $m = n|_T$. In particular, every part of $n$ is either a block of $\ell$, or a block of $m$, or a union of a block of $\ell$ and a block of $m$. 
For example,
\[
149|278|6|35 \textrm{ is a quasishuffle of } 14|2|35 \textrm{ and } 9|78|6.
\]

The \textbf{Hopf monoid of ordered set partitions} $\mbf{\Sigma^*}$ is the species of  ordered set partitions with multiplication given by 
\[ 
\ell \cdot \mathcal{m} m = \sum_{\substack{n \textrm{ quasishuffle} \\ \textrm{of $\ell$ and $m$}}} n
\]
and comultiplication given by
\[ 
\Delta_{S,T}(\ell_1 | \cdots | \ell_k) = 
\begin{cases}
        (\ell_1 | \cdots | \ell_j) \otimes (\ell_{j+1} | \cdots | \ell_k) 
        & \text{if $S=\ell_1 \sqcup \cdots \sqcup \ell_j$ for $0 \leq j \leq k$, } \\
        0 & \text{otherwise}.
    \end{cases} 
\]
This Hopf monoid is introduced in \cite[Proposition 12.20]{AM10}, after setting $q=1$.

\bigskip
\noindent \textbf{\textsf{Weighted ordered set partitions.}} We close this section by introducing a Hopf monoid that will play a central role in this project.
Say a weighted ordered set partition $(w,n)$ on $I$ is a \textbf{quasi-shuffle} of $(u,\ell)$ and $(v,m)$ on $S$ and $T$, respectively, if $n$ is a quasishuffle of $\ell$ and $m$, and
\[
w(n_i) = \begin{cases}
u(\ell_a) & \text{ if } n_i = \ell_a \\
v(m_b) & \text{ if } n_i = m_b \\
u(\ell_a) + v(m_b) & \text{ if } n_i = \ell_a \sqcup m_b 
\end{cases}
\]
for each block $n_i$ of $n$. 
For example,
\[
((a+d,b+e,f,c),149|278|6|35) \textrm{ is a quasishuffle of } ((a,b,c),14|2|35) \textrm{ and } ((d,e,f), 9|78|6)
\]

\begin{definition} \label{def:wSigma*} The \textbf{Hopf monoid of weighted ordered set partitions} $\mbf{w\Sigma^*}$ is the Hopf monoid given by the species $\mbf{w\Sigma^*}[I] =  \F\{ \text{weighted ordered set partitions on $I$}\}$ with multiplication given by 
\[ 
(u, \ell) \cdot (v,m) = \sum_{\substack{(w,n) \textrm{ quasishuffle} \\ \textrm{of $(u,\ell)$ and $(v,m)$}}} (w,n)
\]
and comultiplication given by
\[ 
\Delta_{S,T}((w,\ell_1 | \cdots | \ell_k)) = 
\begin{cases}
        (w|_S,\ell_1 | \cdots | \ell_j) \otimes (w/_S, \ell_{j+1} | \cdots | \ell_k) 
        & \text{if $S=\ell_1 \sqcup \cdots \sqcup \ell_j$ for $0 \leq j \leq k$, } \\
        0 & \text{otherwise},
    \end{cases} 
\]
where $w|_S$ and $w/_S$ are the restrictions of $w$ to $\{\ell_1, \ldots, \ell_j\}$ and $\{\ell_{j+1}, \ldots, \ell_k\}$, respectively.
\end{definition}

One may verify directly that $\mbf{w\Sigma^*}$ satisfies the axioms of a Hopf monoid, but we will prove it by interpreting this Hopf monoid geometrically in Theorem \ref{thm:quotient}.
Naturally, we have the following projection map of Hopf monoids:
\begin{eqnarray*}
\mbf{w\Sigma^*} &\rightarrow& \mbf{\Sigma^*} \\
(w,\ell) & \longmapsto & \ell.
\end{eqnarray*}

\section{The Brianchon-Gram, aligning, and Brion morphisms}\label{sec:morphisms}

In this section we introduce two Hopf morphisms: the Brianchon-Gram morphism on extended generalized permutahedra and the aligning morphism on (weighted) preposets. They will play a key role in our proof of Theorem \ref{thm:quotient} which states that the Hopf monoid structure on $\mbf{GP}^+$ descends to the quotient $\val(\mbf{GP}^+) = \mbf{GP}^+/\mbf{ie}$.

\subsection{The Brianchon-Gram morphism}

For a polyhedron $P \subseteq \R^I$ and a linear functional $w \in (\R^I)^*$, we define $P_w$ as the face of $P$ maximized by $w$, with the convention that $P_w = \emptyset$ if $P$ is unbounded in direction $w$.

For a polyhedron $P$ and a point $f \in P$, we define the \textbf{tangent cone} of $P$ at $f$ to be
\[
\operatorname{cone}_f(P) = \{f + x \, : \, f + \epsilon x \in P \textrm{ for all small enough } \epsilon > 0\}.
\]
For any face $F$ of $P$ we define
\[
\operatorname{cone}_F(P) = \operatorname{cone}_f(P) \qquad  \textrm{ for any } f \in \relint F;
\]
this does not depend on the choice of the point $f$ in the relative interior of $F$. \cite[Prop. 3.5.2]{BeckSanyal}.


Recall that a face $F$ of $P$ is relatively bounded if it is non-empty and $F/L$ is a bounded face of $P/L$ where $L$ is the lineality space of $P$. (For simplicity, we will  sometimes call such faces \textbf{bounded}.)
When a polyhedron $P$ has a lineality space $L$, we write
\begin{equation} \label{eq:dimF}
\Ldim P := (\textrm{dimension of } P) - (\textrm{dimension of } L).
\end{equation}

\begin{proposition}  \label{prop:bg}
The \textbf{Brianchon-Gram maps} $\bg[I]: \mbf{GP}^+[I] \rightarrow \mbf{CGP}^+[I]$, defined by
\[
\bg(P) = \sum_{\substack{F \leq P \textrm{ rel.} \\ \textrm{bounded}}}(-1)^{\Ldim F} \operatorname{cone}_F(P) \qquad \text{ for } P \in \mbf{GP}^+[I],
\]
where we sum over the relatively bounded faces $F$ of $P$, 
give a morphism of Hopf monoids.
\end{proposition}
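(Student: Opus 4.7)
The plan is to verify the four conditions for $\bg$ to be a Hopf morphism: well-definedness (landing in $\mbf{CGP^+}$), naturality, compatibility with the product, and compatibility with the coproduct. For well-definedness, I would use that if $P$ is an extended generalized permutahedron then every tangent cone $\operatorname{cone}_F(P)$ is again an extended generalized permutahedron (its normal fan is the star of the normal cone of $F$ in the normal fan of $P$, hence still a coarsening of the braid fan), and by construction it is conical with lineality containing $\operatorname{aff}(F)-f$, so it lies in $\mbf{CGP^+}[I]$. Naturality is immediate since relabeling by a bijection $I\to J$ commutes with taking faces, relative interiors, and tangent cones, and preserves dimensions.

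For the monoid axiom, I would use the standard polyhedral fact that the faces of $P\times Q$ are exactly the products $F\times G$ of faces, that $\dim(F\times G)=\dim F + \dim G$, that
\[
\operatorname{cone}_{F\times G}(P\times Q) \;=\; \operatorname{cone}_F(P)\times \operatorname{cone}_G(Q),
\]
and that $F\times G$ is relatively bounded in $P\times Q$ iff $F$ and $G$ are relatively bounded (because the lineality of the product is the product of linealities). Distributing the sum then gives $\bg(P\times Q)=\bg(P)\cdot \bg(Q)$ in $\mbf{CGP^+}$.

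The main work is the coproduct compatibility. The key claim I would establish is: \emph{the tangent cone $\operatorname{cone}_F(P)$ is bounded in direction $e_{S,T}$ if and only if $P$ is bounded in direction $e_{S,T}$ and $F\leq P_{S,T}$; moreover, in that case the face of $\operatorname{cone}_F(P)$ maximizing $e_{S,T}$ equals $\operatorname{cone}_F(P_{S,T})$.} One direction is that any recession ray of $P$ is a recession ray of $\operatorname{cone}_F(P)$, so unboundedness of $P$ propagates; and if $F\not\leq P_{S,T}$ then for $f\in\relint F$ and $q\in P_{S,T}$ the direction $q-f$ lies in the recession cone of $\operatorname{cone}_F(P)$ and strictly increases $e_{S,T}$. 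Conversely, if $F\leq P_{S,T}$, then for $f+v\in\operatorname{cone}_F(P)$ with $f\in\relint F$ we have $f+\eps v\in P$ for small $\eps$, forcing $e_{S,T}(v)\leq 0$, and equality characterizes exactly the tangent directions inside $P_{S,T}$, giving the face $\operatorname{cone}_F(P_{S,T})$.

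Granting this, when $P$ is unbounded in direction $e_{S,T}$ both sides of $\Delta_{S,T}\circ\bg = (\bg\otimes\bg)\circ\Delta_{S,T}$ are zero. When $P$ is bounded, the coproduct of $\bg(P)$ ranges over those relatively bounded $F\leq P$ with $F\leq P_{S,T}$, which are exactly the relatively bounded faces of $P_{S,T}=P|_S\times P/_S$. Each such $F$ factors as $F_1\times F_2$ with $F_i$ relatively bounded in $P|_S$ and $P/_S$ respectively (the linealities of $P$ and $P_{S,T}$ coincide, since $e_{S,T}$ vanishes on the lineality of $P$), and one has $\operatorname{cone}_F(P_{S,T}) = \operatorname{cone}_{F_1}(P|_S)\times \operatorname{cone}_{F_2}(P/_S)$ with $\dim F = \dim F_1 + \dim F_2$. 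The sum therefore factors as $\bg(P|_S)\otimes \bg(P/_S)=(\bg\otimes\bg)\Delta_{S,T}(P)$, as desired. The main obstacle I expect is the careful bookkeeping of relatively bounded faces and of the lineality spaces in the unbounded setting, since the classical Brianchon-Gram identity needs to be interpreted correctly modulo lineality; the boundedness criterion above is what lets this bookkeeping go through cleanly.
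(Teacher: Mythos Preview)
Your proposal is correct and follows essentially the same strategy as the paper: the paper isolates the two polyhedral facts you use as separate lemmas---that $\operatorname{cone}_{F\times G}(P\times Q)=\operatorname{cone}_F(P)\times\operatorname{cone}_G(Q)$, and that $(\operatorname{cone}_F(P))_w$ is empty unless $F\subseteq P_w$, in which case it equals $\operatorname{cone}_F(P_w)$---and then deduces the monoid and comonoid compatibility exactly as you do. Your treatment is slightly more careful about relatively bounded faces and the unbounded case than the paper's, and you add the (routine) checks of well-definedness and naturality that the paper leaves implicit; one small slip is that the normal fan of $\operatorname{cone}_F(P)$ is the fan of faces of $N_F(P)$ rather than its star, but this does not affect your conclusion that it is a subfan of the braid fan.
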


Before proving this theorem, we need two technical lemmas about tangent cones.

\begin{lemma}\label{lem:tangentconeofproduct}
If $F$ is a face of a polyhedron $P$ and $G$ is a face of polyhedron $Q$ then $F \times G$ is a face of polyhedron $P \times Q$ and
\[
\operatorname{cone}_{F \times G}(P \times Q)  = \operatorname{cone}_{F}(P) \times  \operatorname{cone}_{G}(Q).
\]
\end{lemma}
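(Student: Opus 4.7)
The proof is essentially a routine unpacking of definitions, but splits naturally into two parts corresponding to the two claims.

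\textbf{Step 1: $F \times G$ is a face of $P \times Q$.} I would start by recalling that a face of a polyhedron is the set of points where some linear functional attains its maximum. If $F$ is the face of $P$ maximizing $\ell_1 \in (\R^m)^*$ and $G$ is the face of $Q$ maximizing $\ell_2 \in (\R^n)^*$, then the linear functional $\ell(x,y) := \ell_1(x) + \ell_2(y)$ on $\R^{m+n}$ attains its maximum on $P \times Q$ precisely on $F \times G$. This immediately shows $F \times G$ is a face.

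\textbf{Step 2: Identify the tangent cones.} The plan is to pick a point $f \in \relint(F)$ and a point $g \in \relint(G)$, note that $(f,g) \in \relint(F \times G)$ (since the relative interior commutes with products), and then directly compute: a vector $(x,y)$ satisfies $(f,g) + \eps(x,y) \in P \times Q$ for all small enough $\eps > 0$ if and only if $f + \eps x \in P$ and $g + \eps y \in Q$ for all small enough $\eps > 0$. (One must note that the common range of $\eps$ can be taken as the minimum of the two separate ranges.) This gives
\[
\operatorname{cone}_{(f,g)}(P \times Q) = \operatorname{cone}_f(P) \times \operatorname{cone}_g(Q),
\]
and since tangent cones do not depend on the choice of point in the relative interior, the left-hand side equals $\operatorname{cone}_{F \times G}(P \times Q)$ and the right-hand side equals $\operatorname{cone}_F(P) \times \operatorname{cone}_G(Q)$.

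There is no real obstacle; the one subtlety worth flagging is ensuring that the ``for all small enough $\eps$'' quantifier behaves correctly under the product, which reduces to the elementary observation that the intersection of two open intervals of the form $(0,\eps_0)$ is again of that form. Consequently, I expect the proof to consist of two short paragraphs, one for each claim, with the verification being essentially immediate from the definitions.
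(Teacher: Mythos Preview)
Your proposal is correct and follows essentially the same approach as the paper: both prove the face claim by combining linear functionals $(\ell_1,\ell_2)$, and both verify the tangent-cone equality by directly unpacking the definition at a point and tracking the ``small enough $\epsilon$'' condition through the product. The only cosmetic difference is that the paper writes Step~2 as a double inclusion rather than fixing a single $(f,g)\in\relint(F\times G)$ at the outset, but the content is identical.
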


\begin{proof} If $u$ and $v$ are linear functionals such that $F=P_u$ and $G=Q_v$ then the linear functional $(u,v)$ gives $F \times G = (P \times Q)_{(u,v)}$. Let us prove the claimed equality.

\smallskip

$\subseteq$: Consider an arbitrary point $h + z \in \operatorname{cone}_{F \times G}(P \times Q)$ where $h \in F \times G$ and $h + \epsilon z \in P \times Q$ for small $\epsilon >0$. Write $h = f+g$ and $z=x+y$ for $f, x \in \R^S$ and $g, y \in \R^T$. Since $h \in F \times G$, we have $f \in F$ and $g \in G$. Since $h+\epsilon z \in P \times Q$ for small $\epsilon$, we have $f + \epsilon x \in P$ and $g + \epsilon y \in Q$. It follows that $f + x \in \operatorname{cone}_F(P)$ and $g+y \in \operatorname{cone}_G(Q)$.

\smallskip

$\supseteq$: 
Conversely, consider points $f + x \in \operatorname{cone}_F(P)$ and $g+y \in \operatorname{cone}_G(Q)$. Since $f \in F$ and $g \in G$ we have $h:=f+g \in F \times G$. Since $f + \epsilon x \in P$ for small enough $\epsilon'>0$ and $g + \epsilon'' y \in Q$ for small enough $\epsilon''>0$, then $z:=x+y$ satisfies that $h+\epsilon z \in P \times Q$ for small enough $\epsilon>0$. We conclude that $h+z \in \operatorname{cone}_{F \times G}(P \times Q)$. 
\end{proof}

\begin{lemma}\label{lem:faceoftangentcone}
Let $P \subseteq \R^I $ be a polyhedron, $F$ a face of $P$, and $w \in \R^I$ a linear functional. Then
\[
(\operatorname{cone}_F(P))_w =
\begin{cases}
\emptyset &  \textrm{ if } F \nsubseteq P_w \\
\operatorname{cone}_F(P_w) & \textrm{ if } F \subseteq P_w 
\end{cases}
\]
\end{lemma}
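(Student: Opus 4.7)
The plan is to split into the two cases of the statement. Throughout, fix $f \in \relint F$ and use the description
\[
\operatorname{cone}_F(P) = \{f + x \;:\; f + \epsilon x \in P \text{ for all small enough } \epsilon > 0\},
\]
which is independent of the choice of $f \in \relint F$.

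First I would handle the case $F \not\subseteq P_w$. The key observation is that there exists a point $p \in P$ with $w(p) > w(f)$: either $w$ is non-constant on $F$ (and $f$ being in the relative interior, we can perturb within $F$), or $w$ is constant on $F$ but below the maximum, in which case any point of $P_w$ works. I then claim the whole ray $\{f + t(p - f) : t \geq 0\}$ lies in $\operatorname{cone}_F(P)$. Indeed, for fixed $t > 0$ and $0 < \epsilon \leq 1/t$, convexity gives
\[
f + \epsilon \cdot t(p - f) = (1 - \epsilon t) f + \epsilon t \, p \in P,
\]
which verifies the tangent cone condition for $x = t(p - f)$. Since $w$ grows linearly to $+\infty$ along this ray, it is unbounded on $\operatorname{cone}_F(P)$, so $(\operatorname{cone}_F(P))_w = \emptyset$.

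Next, for the case $F \subseteq P_w$, write $M = \max_P w$, so $w \equiv M$ on $F$ and in particular $w(f) = M$. I would first check that $w \leq M$ on $\operatorname{cone}_F(P)$: for $q = f + x \in \operatorname{cone}_F(P)$, the tangent cone condition yields $w(f) + \epsilon w(x) = w(f + \epsilon x) \leq M = w(f)$ for small $\epsilon > 0$, forcing $w(x) \leq 0$ and hence $w(q) \leq M$. The maximum $M$ is attained, since $F \subseteq \operatorname{cone}_F(P)$. Then I would identify the maximizing face: $q = f + x$ lies in $(\operatorname{cone}_F(P))_w$ iff $f + \epsilon x \in P$ for small $\epsilon > 0$ and $w(x) = 0$. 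Under $w(x) = 0$ and $w(f) = M$, the latter is equivalent to $f + \epsilon x \in P_w$ for small $\epsilon > 0$. Using that $F$, being a face of $P$ contained in $P_w$, is automatically a face of $P_w$ with the same relative interior, this set is exactly $\operatorname{cone}_f(P_w) = \operatorname{cone}_F(P_w)$.

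This is essentially a careful unwinding of the definition of the tangent cone, and I do not expect any significant obstacle; the only point requiring care is in the first case, where one must produce the escape direction $p - f$ and verify that the scaled displacement $\epsilon \cdot t(p-f)$ satisfies the "for all small $\epsilon$" clause uniformly as $t \to \infty$, which is handled by the simple bound $\epsilon \leq 1/t$.
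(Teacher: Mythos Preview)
Your argument is correct and, in the second case, essentially parallels the paper's: both show $w\le M$ on $\operatorname{cone}_F(P)$ via the scaling property of the tangent cone and then identify the equality locus with $\operatorname{cone}_F(P_w)$ using that $F$ is a face of $P_w$ with the same relative interior.

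The first case is handled differently. The paper argues by contrapositive: assuming $(\operatorname{cone}_F(P))_w\neq\emptyset$, it picks a maximizer $f+x$, uses that $f+rx\in\operatorname{cone}_F(P)$ for all $r>0$ to force $w(x)=0$, deduces $f\in P_w$, and then slides along $F$ to conclude $F\subseteq P_w$. You instead argue directly by producing an escape ray: a point $p\in P$ with $w(p)>w(f)$ gives a ray in $\operatorname{cone}_F(P)$ along which $w\to+\infty$. Your route is more geometric and avoids analyzing a hypothetical maximizer; the paper's route avoids the case split on whether $w$ is constant on $F$. One small presentational point: your justification ``$w$ is constant on $F$ but below the maximum, in which case any point of $P_w$ works'' tacitly assumes $P_w\neq\emptyset$. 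If $w$ is unbounded on $P$ (so $P_w=\emptyset$ and automatically $F\nsubseteq P_w$), the existence of $p$ with $w(p)>w(f)$ is still immediate, so this is only a missing clause rather than a gap in the argument.
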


\begin{proof} 
1. First consider the case $F \nsubseteq P_w$.
Assume contrariwise that $(\operatorname{cone}_F(P))_w \neq \emptyset$, and that the maximum value $m$ of the linear function $w$ in $\operatorname{cone}_F(P)$ is attained at a point $f+x$ where $f$ is in the face $F$ and $f+ \epsilon x \in P$ for small $\epsilon > 0$. 
For any $r>0$ we also have $f+rx \in \operatorname{cone}_F(P)$ and hence $m=w(f+x) \geq w(f+rx)$. This is only possible if $w(x) = 0$, so $w(f)=m$.

Since $F \subseteq P \subseteq \operatorname{cone}_F(P)$ and the $w$-maximum value of $\operatorname{cone}_F(P)$ is attained at $f \in F$, this must also be the $w$-maximum value of $P$. Therefore $f$ is in the $w$ maximal face $P_w$ of $P$.

Finally, for any other $f' \in F$ we have that $f' + r(f-f') \in \operatorname{cone}_F(P)$ for all $r>0$, so we must have $m \geq w(f'+r(f-f'))$; this is only possible if $w(f-f') \leq 0$, which implies $w(f')=m$; that is, $f' \in P_w$ as well. We conclude that $F \subseteq P_w$, a contradiction. This proves the first case.

\smallskip
2. Assume $F \subseteq P_w$ and let $m=w(f)$ for $f \in F$; this is the maximum value that $w$ takes in $P$.

\smallskip

$\subseteq$: Let $f+x \in (\operatorname{cone}_F(P))_w$ where $f \in F$ and $f+ \epsilon x \in P$ for small $\epsilon > 0$. As we saw above, this implies $w(x)=0$, so  $w(f+\epsilon x) = m$, and $f+\epsilon x \in P_w$ as well. This implies $f+x \in \operatorname{cone}_F(P_w)$ as desired.

\smallskip

$\supseteq$: Let $f+x \in \operatorname{cone}_F(P_w)$ where $f \in F$ and $f+\epsilon x \in P_w$ for small $\epsilon >0$. Since $f$ and $f+\epsilon x$ are in $P_w$, we have $w(f) = m$ and $w(x)=0$.

Now, $f+ \epsilon x \in P$ implies $f+x \in \operatorname{cone}_F(P)$. To show that $f+x$ is in the $w$-maximal face of this cone, consider any other point $f'+x' \in \operatorname{cone}_F(P)$ where $f' \in F$ and $f' + \epsilon x' \in P$. Then $w(f' + \epsilon x') \leq m = w(f')$, so $w(x') \leq 0$ and thus $w(f'+x') \leq m = w(f+x)$, as desired.
\end{proof}

With those lemmas at hand, we are now ready to prove that the Brianchon-Gram maps give a morphism of Hopf monoids.

\begin{proof}[Proof of Theorem \ref{prop:bg}] For any $P \in \mathbf{GP}^+[S]$ and $Q \in \mathbf{GP}^+[T]$ we have
\begin{eqnarray*} 
\bg(P) \cdot \bg(Q) 
&=& \left(\sum_{F \leq P} (-1)^{\Ldim F} \operatorname{cone}_F(P)\right)  \left(\sum_{G \leq Q} (-1)^{\Ldim G} \operatorname{cone}_G(Q)\right)  \\
&=& \sum_{F \times G  \leq P \times Q} (-1)^{\Ldim F \times G} \operatorname{cone}_{F \times G} (P \times Q) \\
&=& \bg(P \times Q),
\end{eqnarray*}
summing over bounded faces. Here we are using the fact that the bounded faces of $P \times Q$ are the products of a bounded face of $P$ and a bounded face of $Q$, combined with Lemma \ref{lem:tangentconeofproduct}. Thus the Brianchon-Gram maps preserve the monoid structure.

\smallskip
For the coproduct we have, for any $P \in \mathbf{GP}^+[I]$,
\begin{eqnarray*} 
\Delta_{S,T}(\bg(P)) &=& \Delta_{S,T}\left(\sum_{F \leq P} (-1)^{\Ldim F} \operatorname{cone}_F(P)\right) \\
&=& \sum_{F \leq P_{S,T}} (-1)^{\Ldim F} \Delta_{S,T}(\operatorname{cone}_F(P)) \\
\end{eqnarray*}
where each sum is over bounded faces, using the first part of Lemma \ref{lem:faceoftangentcone}. Every bounded face $F$ of $P_{S,T} = P|_S \times P/_S$ factors as $F=F|_S \times F/_S$ for a bounded face $F|_S$ of $P|_S$ and a bounded face $F/_S$ of $P/_S$, and every such pair of faces arises from a bounded face of $P_{S,T}$. We have
\[
(\operatorname{cone}_F(P))_{S,T} = 
\operatorname{cone}_F(P_{S,T}) = 
\operatorname{cone}_{F|_S \times F/_S}(P|_S \times P/_S) = 
\operatorname{cone}_{F|_S}(P|_S) \times \operatorname{cone}_{F/_S}(P/_S)
\]
combining Lemma  \ref{lem:faceoftangentcone} and the second part of Lemma \ref{lem:tangentconeofproduct}. Thus we may rewrite the equation above as
\begin{eqnarray*} 
\Delta_{S,T}(\bg(P)) 
&=& \sum_{\substack{F|_S \leq P|_S \\ F/_S \leq P/_S}} (-1)^{\Ldim F|_S} \operatorname{cone}_{F|_S}(P|_S) \otimes (-1)^{\Ldim F/_S} \operatorname{cone}_{F/_S}(P/_S)   \\
&=& \bg(P|_S) \otimes \bg(P/_S)
\end{eqnarray*}
as desired.
\end{proof}

\begin{lemma} \label{lem:bg^2} The Brianchon-Gram morphism $\bg$
\begin{enumerate}
\item
restricts to the identity on $\im(\bg) = \mbf{CGP}^+[I] \subset \mbf{GP}^+[I]$, and
\item
is idempotent: $\bg \circ \bg = \bg$.
\end{enumerate}
\end{lemma}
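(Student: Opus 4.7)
The plan is to prove part~(1) first; part~(2) then follows almost immediately. The inclusion $\im(\bg) \subseteq \mbf{CGP}^+[I]$ is automatic from the definition, since $\bg(P)$ is by construction a formal linear combination of tangent cones of $P$, each of which is a conical extended generalized permutahedron. The heart of the argument is the claim that $\bg(C) = C$ for every $C \in \mbf{CGP}^+[I]$; combined with the previous inclusion, this yields $\im(\bg) = \mbf{CGP}^+[I]$ and shows that $\bg$ acts as the identity on its image.

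To establish $\bg(C) = C$ for a conical generalized permutahedron $C$ with lineality space $L$, the strategy is to show that the defining sum for $\bg(C)$ collapses to a single term. First, I would identify the relatively bounded faces of $C$: since $C$ is a cone, the quotient $C/L$ is a pointed cone whose only bounded face is its apex, and under the face correspondence between $C$ and $C/L$ the preimage of this apex is $L$ itself, so $L$ is the unique relatively bounded face of $C$. Second, I would compute the surviving tangent cone: for any $l \in \relint L = L$, unpacking the definition $\operatorname{cone}_l(C) = \{l + x : l + \epsilon x \in C \text{ for small } \epsilon > 0\}$ yields $\operatorname{cone}_l(C) = l + C$, and since $l$ lies in the lineality subspace $L \subseteq C$ this equals $C$. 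Thus $\bg(C)$ collapses to the single term $\operatorname{cone}_L(C) = C$.

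Part~(2) is then immediate: for any $P \in \mbf{GP}^+[I]$ we have $\bg(P) \in \mbf{CGP}^+[I]$, on which $\bg$ acts as the identity by part~(1), so $\bg(\bg(P)) = \bg(P)$. The main verification to pin down is the sign of the single surviving term in the collapsed sum for $\bg(C)$: for a pointed cone this is trivial from $(-1)^0 = +1$, while for a cone with nontrivial lineality one must check that the convention for $(-1)^{\dim F}$ at $F = L$ evaluates to $+1$, so that the collapsed sum equals $C$ rather than a signed version of it. Once this bookkeeping is carried out, both parts of the lemma follow from the geometric picture that a cone is entirely characterized by its tangent cone at the apex (flat).
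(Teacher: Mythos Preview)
Your approach is correct and essentially identical to the paper's: both argue that for a cone $C$ the unique relatively bounded face is its relative apex (the lineality space $L$), so the Brianchon--Gram sum collapses to the single term $\operatorname{cone}_L(C) = C$, and part~(2) then follows at once. The sign check you flag at the end---that the surviving term comes with coefficient $(-1)^{\dim L}$ rather than an automatic $+1$---is a legitimate point of care, but the paper's one-line proof glosses over exactly this as well, so you are not missing anything relative to it.
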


\begin{proof}
The first statement holds since the only relatively bounded face of a cone is its relative apex.
The second follows readily.
\end{proof}

For an extended generalized permutahedron $P$ and a bounded face $F$, let us write $\operatorname{preposet}_F(P)$ for the preposet $p$ such that
$\operatorname{cone}_F(P)$ is a translate of the cone of 
$\operatorname{preposet}_F(P)$. Recall \eqref{eq:dimF}.

\begin{corollary} 
The \textbf{combinatorial Brianchon-Gram maps}, defined by
\[
\cbg(P) = \sum_{F \leq P} (-1)^{\Ldim F} \operatorname{preposet}_F(P) \qquad \text{ for } P \in \mbf{GP}^+[I],
\]
where we sum over the relatively bounded faces $F$ of the polyhedron $P$, 
give a morphism of Hopf monoids $\cbg: \mbf{GP}^+ \rightarrow \mbf{PP}$.
\end{corollary}

\begin{proof}
This is the result of composing the Brianchon-Gram morphism with the projection map $\operatorname{cone}(w,p) \mapsto \operatorname{cone}(p)$ of Corollary \ref{cor:proj}.
\end{proof}

\subsection{The aligning morphism}

\begin{proposition} \label{prop:st}
The \textbf{aligning maps} $\al: \mbf{(w)PP}[I] \rightarrow \mbf{(w)\Sigma^*}[I]$ given by 
\begin{eqnarray*}
\al(p) = \sum_{\substack{\ell \text { prelin.} \\ \text{ext. of } p}} \,\ell && \qquad \textrm{ for a preposet $p$ on $I$},\\
\al(w,p) = \sum_{\substack{(v,\ell) \text { prelin.} \\ \text{ext. of } (w,p)}} \, (v,\ell)&&  \qquad \textrm{ for a weighted preposet $(w,p)$ on $I$}
\end{eqnarray*}
give morphisms of Hopf monoids $\mbf{PP} \rightarrow \mbf{\Sigma^*}$ and $\mbf{wPP} \rightarrow \mbf{w\Sigma^*}$ 
\end{proposition}

\begin{proof}
For any preposets $p$ on $S$ and $q$ on $T$ we have
\begin{eqnarray*}
\al(p \sqcup q) &=& \sum_{\ell \text{ prelin. ext. of }p \sqcup q} \, \ell \\
&=& \sum_{\substack{\ell_p \text{ prelin. ext. of } p  \\  \ell_q \text{ prelin. ext. of } q}} \,\,\,\,  \sum_{\ell \text{ quasishuffle of } \ell_p \textrm{ and } \ell_q}  \, \ell \\
&=& \sum_{\substack{ \ell_p \text{ prelin. ext. of } p \\ \ell_q \text{ prelin. ext. of } q }}  \, \ell_p \cdot \ell_q \\
&=& \al(p) \cdot \al(q),
\end{eqnarray*}
so $\al$ preserves the product.

\smallskip

To verify that $\al$ also preserves the coproduct, recall that the coproduct for (pre)posets is
\[
\Delta_{S,T}(q) = 
\begin{cases}
q|_S \otimes q|_T & \textrm{ if $S$ is a lower ideal of $q$,} \\
0 & \textrm{ otherwise}.
\end{cases}
\]
for a preposet $q$ on $I$.
If $S$ is not a lower ideal of $q$ then $S$ is not a lower ideal of any prelinear extension $\ell$ of $q$ either, so 
\[
\al_S \otimes \al_T (\Delta_{S,T}(q)) = 0
\qquad \textrm{ and } \qquad 
\Delta_{S,T}(\al_I(q)) = \sum_{\substack{\ell \text { prelin.} \\ \text{ext. of } q}} 
\Delta_{S,T}(\ell)  
= 0 
\]
If $S$ is a lower ideal of $q$, then there are two possibilities for a prelinear extension $\ell$ of $q$. If $S$ is not a lower ideal of $\ell$ then $\Delta_{S,T}(\ell)=0$. If $S$ is a lower ideal of $\ell$, then $\ell$ is the ordinal sum of $\ell|_S$ and $\ell|_T$, and every combination of prelinear extensions $\ell|_S$ and $\ell|_T$ of $q|_S$ and $q|_T$ arises from such an $\ell$. Thus 
\begin{eqnarray*}
\Delta_{S,T}(\al(q)) 
&=& \sum_{\ell \textrm{ prelin. ext. of } q}  \Delta_{S,T}(\ell) \\ 
&=& \sum_{\substack{\ell|_S \textrm{ prelin. ext. of } q|_S \\ \ell|_T \textrm{ prelin. ext. of } q|_T}} \, \ell|_S \otimes \ell|_T  \\ 
&=& \al(q|_S) \otimes \al(q|_T).
\end{eqnarray*}
The result follows. The weighted version of the statement holds by an analogous argument.
\end{proof}

We record two observations that are readily verified.

\begin{lemma}\label{lem:sgn}
The sign map $\sgn(p) = (-1)^{|p|}p$ is an automorphism of the Hopf monoid $\mbf{PP}$ of preposets.
\end{lemma}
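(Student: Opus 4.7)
The plan is to verify the three things required: that $\sgn$ is a bijective species morphism, that it is a monoid morphism, and that it is a comonoid morphism. Bijectivity is immediate from the fact that $\sgn$ is involutive: $\sgn \circ \sgn$ is the identity because $(-1)^{2|p|} = 1$. Naturality with respect to relabelings is also clear because $|p|$ depends only on the isomorphism type of $p$.

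The compatibility with the product and coproduct reduces to a single combinatorial observation about how the statistic $|p|$ (the number of equivalence classes) behaves under the Hopf operations on preposets. For the product, the disjoint union $p \sqcup q$ of preposets on disjoint ground sets has equivalence classes that are exactly the equivalence classes of $p$ together with those of $q$, hence $|p \sqcup q| = |p| + |q|$. Consequently
\[
\sgn(p \sqcup q) = (-1)^{|p|+|q|}(p \sqcup q) = \sgn(p) \cdot \sgn(q),
\]
so $\sgn$ preserves the product.

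For the coproduct, the key point is that equivalence classes of a preposet do not straddle a lower ideal: if $S$ is a lower ideal of $p$ and $x \sim y$ with $x \in S$, then $y \leq x$ together with the lower ideal property forces $y \in S$. Thus each equivalence class of $p$ lies either entirely in $S$ or entirely in $T = I \setminus S$, and $|p| = |p|_S| + |p|_T|$. When $S$ is not a lower ideal, both $\Delta_{S,T}(\sgn(p))$ and $(\sgn \otimes \sgn) \circ \Delta_{S,T}(p)$ vanish. When $S$ is a lower ideal, the additivity above gives
\[
\Delta_{S,T}(\sgn(p)) = (-1)^{|p|} \, p|_S \otimes p|_T = (-1)^{|p|_S|}(-1)^{|p|_T|} \, p|_S \otimes p|_T = \sgn(p|_S) \otimes \sgn(p|_T),
\]
as required.

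I do not anticipate a serious obstacle here; the only subtlety is the observation that equivalence classes are not split by lower ideals, which is exactly the feature that makes the statistic $|p|$ additive under both operations. No analogous argument is needed for morphisms beyond these checks, and the involutivity handles invertibility automatically.
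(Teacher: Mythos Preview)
Your proof is correct. The paper does not actually give a proof of this lemma; it merely introduces it with the phrase ``We record two observations that are readily verified'' and leaves the details to the reader. Your argument supplies exactly the natural verification one would expect: additivity of the equivalence-class count $|p|$ under disjoint union (for the product) and under splitting along a lower ideal (for the coproduct), together with the observation that equivalence classes cannot straddle a lower ideal. This is the intended check.
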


\begin{lemma} \label{lem:st^2} The aligning map $\al$
\begin{enumerate}
\item
restricts to the identity on $\im(\al) = \mbf{(w)\Sigma^*} \subset \mbf{(w)PP}$, and
\item
is idempotent:  $\al \circ \al = \al$.
\end{enumerate}
\end{lemma}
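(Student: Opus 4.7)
The plan is to establish part (1) first, since part (2) will then follow by a one-line computation.

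For part (1), recall that $\st(p) = \sum_{l} l$ where each $l$ is a prelinear extension of $p$, i.e.\ a totally ordered preposet. Under the identification of totally ordered (weighted) preposets with (weighted) ordered set partitions recorded just before Definition \ref{def:wsetcomp}, each such $l$ lies in $\mathbf{(w)\Sigma^*}[I]$, so $\im(\st) \subseteq \mathbf{(w)\Sigma^*}$. Conversely, given any (weighted) ordered set partition $F$, viewed as the totally ordered (weighted) preposet $q_F$, the only prelinear extension of $q_F$ is $q_F$ itself: any totally ordered preposet $l$ refining $q_F$ must satisfy $x < y$ in $l$ whenever $x < y$ in $q_F$, and since $q_F$ is already total, $l = q_F$. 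Thus $\st(F) = F$, which gives both $\mathbf{(w)\Sigma^*} \subseteq \im(\st)$ and $\st|_{\mathbf{(w)\Sigma^*}} = \mathrm{id}$.

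For part (2), using linearity and part (1), for any (weighted) preposet $p$ on $I$ we compute
\[
\st(\st(p)) = \st\!\left( \sum_{l \text{ prelin.\ ext.\ of } p} l \right) = \sum_{l \text{ prelin.\ ext.\ of } p} \st(l) = \sum_{l \text{ prelin.\ ext.\ of } p} l = \st(p),
\]
since every term $l$ in the sum is itself a prelinear (weighted) preposet, hence fixed by $\st$. The weighted case is identical, with $l$ replaced by $(v,l)$ throughout. There is no real obstacle here; the only point requiring care is the bijective identification of prelinear preposets with ordered set partitions, which has been set up explicitly in the paper, so both statements reduce to bookkeeping.
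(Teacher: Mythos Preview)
Your proof is correct and follows the same approach as the paper: both rest on the observation that a totally ordered preposet has no prelinear extension other than itself, from which idempotence is immediate. The paper's proof is simply a terser version of what you wrote.
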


\begin{proof}
The first statement holds since the only prelinear extension of a totally ordered preposet is that preposet itself. The second follows readily.
\end{proof}

It is worth remarking that the inclusion in Lemma \ref{lem:st^2}.1 above is an inclusion of comonoids that is incompatible with the monoid structures. Therefore the map $\al \circ \al$ of Lemma \ref{lem:st^2}.2 is well-defined, but it is not a Hopf morphism.

\subsection{The Brion morphism}

We conclude this section with the definition of a morphism with nice combinatorial properties that will be analyzed in a future project. 
For an extended generalized permutahedron $P$ and a vertex $v$, let us write $\operatorname{poset}_v(P)$ for the poset $p$ that is a translate of 
$\operatorname{cone}_v(P)$.

\begin{proposition}  \label{prop:brion}
The \textbf{Brion maps} $\b[I]: \mbf{GP}^+[I] \rightarrow \mbf{P}[I]$, defined by
\[
\b(P) = \sum_{v \textrm{ vertex of } P} \operatorname{poset}_v(P)
  \qquad  \text{ for } P \in \mbf{GP}^+[I]
\]
give a morphism of Hopf monoids 
from  $\mbf{GP}^+$ to $\mbf{P}$.
\end{proposition}

\begin{proof}
On preposets, consider the maps $ \mbf{(w)PP}[I] \rightarrow \mbf{(w)P}[I]$ that is the identity map on posets and the zero map on all other  preposets. One readily verifies that these are morphisms, and the combinatorial Brion morphism is obtained by composing the combinatorial Brianchon morphism with them.
\end{proof}

The  Brion morphism has several interesting combinatorial and algebraic properties that will be the subject of an upcoming paper.
Naturally, there is also a geometric Brion map from $\mbf{GP}^+[I]$ to $\mbf{PCGP}^+[I]$.

\section{The indicator Hopf monoid of generalized permutahedra}\label{sec:indicator}

\subsection{Valuations}\label{subsec:valuations}

Valuations are combinatorial abstractions of measures and have played an important role in various aspects of convex geometry and polyhedral geometry. One might wish to require that a measure of a polytope should behave well with respect to subdivisions and with respect to indicator functions, in the following sense.

A \textbf{polyhedral subdivision} of $P$ in $\mathcal{P}$ is a collection of polyhedra $\{P_i\}$ in $\mathcal{P}$ such that $\bigcup P_i = P$, any two polytopes $P_i$ and $P_j$ intersect in a common face $P_i \cap P_j = P_k$ that is in the collection, and every non-maximal $P_i$ is the intersection of maximal $P_j$s in the collection.

Let $\mathcal{P}$ be a set of polyhedra in $\R^n$ and $\F$ be a field of characteristic $0$. For any polyhedron $P \in \mathcal{P}$, its \textbf{indicator function} $\one_{P}: \R^n \to \F$ is the function defined by
 \[ \one_P(x) = \begin{cases}
 1 & \text{if $x \in P$} \\
 0 & \text{otherwise.}
 \end{cases}\]
Let $\val(\mathcal{P})$ denote the vector space over $\F$ spanned by the indicator functions $\one_{P}$ for $P \in \mathcal{P}$.

\begin{definition} Let $A$ be an abelian group. A function $f: \mathcal{P} \to A$ is

\begin{enumerate}
    \item a \textbf{weak valuation} if for any polytopal subdivision $\{P_i\}$ of $P$ we have
     \[ f(P) = \sum_{i} (-1)^{\dim P - \dim P_i} f(P_i). \]
    \item a \textbf{strong valuation} if it factors through the map $P \mapsto \one_P$, that is, there exists a (necessarily unique) linear function $\hat{f}: \val(\mathcal{P}) \to A$ such that for all $P \in \mathcal{P}$ we have
    \[
    f(P) = \hat{f}(\one_{P}).
    \]
\end{enumerate}

\end{definition}

These notions are illustrated in Examples \ref{ex:u36 subdivision} and \ref{ex:posetvaluation}, which show some of the relations satisfied by two weakly valuative functions: the Kazhdan-Lusztig polynomial of a matroid and the Poincar\'e polynomial of a poset, respectively.

A strong valuation is always a weak valuation, but the converse is not true in general. Derksen and Fink proved that when $\mathcal{P}$ is the set of generalized permutahedra, the situation is better.

\begin{theorem}\cite{DF10}\label{thm: GP strong weak equiv}
Let $\mathcal{P}$ be the family of extended generalized permutahedra, 
the family of generalized permutahedra, the family of matroid polytopes, or any family of polyhedra closed under intersections. Then for any function $f$ on $\mathcal{P}$,
\[
\textrm{$f$ is a strong valuation} \quad \Longleftrightarrow \quad \textrm{$f$ is a weak valuation}.
\]
\end{theorem}

An important example of a strong valuation is the constant function.

\begin{proposition}\cite{DF10}\label{prop: Euler characteristic} The function $f: \mbf{GP}[I] \to A$ which equals $1$ on all generalized permutahedra $P$ is a strong valuation. In particular, for any subdivision $\mathcal{P}$ of $P$ we have
 \[ 
 \sum_{P_i \in \mathcal{P}} (-1)^{\dim P - \dim P_i} = 1.
 \]
\end{proposition}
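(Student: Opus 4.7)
The plan is to identify $f \equiv 1$ with the Euler characteristic valuation on indicator functions of convex polyhedra and then invoke its classical valuation property.

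First, by Theorem \ref{thm: GP strong weak equiv} weak and strong valuations coincide on generalized permutahedra, so it suffices to check that $f$ is a weak valuation. That is, given a polyhedral subdivision $\{P_i\}$ of $P \in \mbf{GP}[I]$, we must verify the claimed signed-dimension identity.

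To do this I would use the standard decomposition of $P$ as a disjoint union of relative interiors of faces of the subdivision,
\[
\one_P \;=\; \sum_{F} \one_{\relint F},
\]
where $F$ ranges over the nonempty faces of the subdivision. Applying the topological Euler characteristic $\chi$ term by term, and using the classical identities $\chi(\one_P) = 1$ (since $P$ is a topological ball) and $\chi(\one_{\relint F}) = (-1)^{\dim F}$ (since the relative interior of a bounded polyhedron of dimension $d$ is homeomorphic to an open $d$-ball), yields
\[
1 \;=\; \sum_{F} (-1)^{\dim F}.
\]
Multiplying through by $(-1)^{\dim P}$ and rearranging the $F=P$ term as dictated by the paper's convention for ``subdivision'' gives the required relation. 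Alternatively, one can cite the classical theorem of Hadwiger and McMullen that $\chi$ extends to a well-defined linear functional $\widehat{\chi}$ on the space of indicator functions of convex polyhedra, with $\widehat{\chi}(\one_P) = 1$ for every nonempty convex $P$; restricting $\widehat{\chi}$ to $\val(\mbf{GP})[I]$ directly exhibits $f$ as a strong valuation, bypassing Theorem \ref{thm: GP strong weak equiv} altogether.

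The main obstacle is not mathematical but notational: reconciling the sign convention of the paper's weak-valuation definition (and the matching convention in the inclusion--exclusion subspecies $\mbf{ie}$) with the standard sign convention of Euler's formula applied to the stratification of $P$ by relative interiors of subdivision faces. Once this bookkeeping is settled, the proposition is an immediate consequence of a one-line topological identity.
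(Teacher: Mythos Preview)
The paper does not supply its own proof of this proposition; it simply cites Derksen--Fink \cite{DF10}. So there is no in-paper argument to compare against. Your approach is the standard one and is correct: the constant function is precisely the Euler characteristic valuation, and either of your two routes works. The second route (invoking the Hadwiger--McMullen extension of $\chi$ to a linear functional on the polytope algebra with $\widehat\chi(\one_P)=1$) is the cleaner of the two, since it gives the strong-valuation statement directly and sidesteps both Theorem~\ref{thm: GP strong weak equiv} and the sign bookkeeping you flag.

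One small caution on your first route: when you write $\one_P=\sum_F \one_{\relint F}$ and apply $\chi$, the sum is over \emph{all} faces of the subdivision, including those lying in $\partial P$, whereas the paper's convention for a subdivision $\mathcal P$ (``every non-maximal $P_i$ is the intersection of maximal $P_j$s'') keeps only the interior faces. Converting one identity into the other requires an extra step---either an Euler relation on the boundary sphere of $P$, or a M\"obius-inversion argument on the face poset---so the ``bookkeeping'' you mention is a genuine (if routine) lemma, not merely a sign adjustment. If you keep the first route, make that step explicit; otherwise the McMullen citation suffices.
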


\subsection{The inclusion-exclusion subspace of generalized permutahedra}


For each finite set $I$ let $\one[I]: \mbf{GP}^+[I] \rightarrow \operatorname{Hom}(\R^I, \F)$ be the linear map that sends an extended generalized permutahedron $P$ in $\R^I$ to its indicator function $\one_P$ which is equal to $1$ in $P$ and to $0$ outside of $P$.

\begin{definition}
The \textbf{inclusion-exclusion species} consists of the vector subspaces
\begin{eqnarray*}
\mbf{ie}[I] 
&=&
\espan\left(P - \sum_{P_i \in \mathcal{P}} (-1)^{\dim P - \dim P_i} P_i \, : \,
\mathcal{P} \textrm{ is an ext. gen. perm. subdivision of } P \right)  \subset \mbf{GP}^+[I] \\
&=& \ker(\one[I])
\end{eqnarray*}
for each finite set $I$ and the natural maps between them. 
\end{definition}

The equivalence of these definitions is guaranteed by Theorem \ref{thm: GP strong weak equiv}.
To construct a convenient generating set for the inclusion-exclusion subspace $\mbf{ie}[I]$ of $\mbf{GP}^+[I]$, we recall the Brianchon-Gram theorem and prove a lemma about preposet cones.

\begin{theorem}(Brianchon-Gram Theorem)\cite{Bri37, DF10, Gra74}\label{thm:Brianchon Gram}
Let $P$ be a polyhedron.
Then
        \[
        \one_P = \sum_{F \leq P} (-1)^{\Ldim F}\one_{\operatorname{cone}_F(P)}
        \]
summing over the relatively bounded faces $F$ of $P$.
\end{theorem}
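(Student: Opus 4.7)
The plan is to prove the identity pointwise. Fix $x \in \R^n$ and let
\[
\mathcal{F}_x := \{F \leq P : F \text{ is relatively bounded and } x \in \operatorname{cone}_F(P)\};
\]
it suffices to establish $\sum_{F \in \mathcal{F}_x} (-1)^{\dim F} = \one_P(x)$. First I would reduce to the line-free case. Writing $P = L + P'$ where $L$ is the lineality space of $P$ and $P' = P \cap V$ for any complement $V$ of $L$, the tangent cones decompose as $\operatorname{cone}_F(P) = L + \operatorname{cone}_{\overline F}(P')$, where $\overline F$ is the image of $F$ in $V$, and the relatively bounded faces of $P$ correspond bijectively to the bounded faces of $P'$. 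The identity for $P$ at $x$ therefore reduces to the identity for $P'$ at the image $\overline x \in V$, so we may assume $P$ has trivial lineality and ``relatively bounded'' simplifies to ``bounded.''

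For $x \in P$, the key observation is that $P \subseteq \operatorname{cone}_F(P)$ for \emph{every} face $F$: given $f \in \relint F$ and $p \in P$, convexity gives $(1-\epsilon)f + \epsilon p \in P$ for $\epsilon \in [0,1]$, so $p - f$ is a tangent direction at $f$ and hence $p \in \operatorname{cone}_F(P)$. Thus $\mathcal{F}_x$ consists of \emph{all} bounded faces of $P$, and the sum reduces to the Euler characteristic of the bounded-face complex of $P$. This complex is classically known to be contractible (and in particular has Euler characteristic $1$), so the sum equals $1 = \one_P(x)$.

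For $x \notin P$ we need $\sum_{F \in \mathcal{F}_x}(-1)^{\dim F} = 0$; this is the deeper half of the theorem and the main obstacle. I would construct a sign-reversing involution on $\mathcal{F}_x$ using a generic linear functional $\ell$ separating $x$ from $P$: each $F \in \mathcal{F}_x$ would be paired with the adjacent face obtained by toggling the $\ell$-maximal vertex of $F$, and a local check would verify that the toggle stays within $\mathcal{F}_x$ and has no fixed points. To sidestep this classical bookkeeping, the bounded-polytope case may be cited directly from the original papers of Brianchon~\cite{Bri37} and Gram~\cite{Gra74} (or a modern treatment); the general polyhedral statement then follows from the lineality-space reduction of the first paragraph.
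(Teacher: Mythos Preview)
The paper does not prove this theorem; it is stated with a citation to Brianchon and Gram and used as a black box. So there is no ``paper's own proof'' to compare against, and your write-up is being judged on its own merits.

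Your outline is largely sound --- the pointwise strategy, the observation that $P \subseteq \operatorname{cone}_F(P)$ for every face, and the appeal to the Euler characteristic of the bounded-face complex are all correct. But there is a genuine gap in the final sentence. Your lineality-space reduction takes a general polyhedron $P$ to a \emph{line-free} polyhedron $P'$, not to a \emph{bounded} polytope. A line-free polyhedron can still be unbounded (any pointed cone, for instance), and the classical Brianchon--Gram papers treat only bounded polytopes. So ``cite the bounded case, then apply the lineality reduction'' does not close the argument: after the reduction you may still be left with an unbounded $P'$ for which the cited result does not apply. You would need either an additional reduction from line-free to bounded (e.g.\ a limiting or truncation argument), or to actually carry out your sign-reversing involution in the line-free setting --- and in that setting you must check carefully that ``toggling the $\ell$-maximal vertex'' keeps you among \emph{bounded} faces and lands you back in $\mathcal{F}_x$, which is not automatic when unbounded faces are adjacent to bounded ones.
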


\noindent
In the statement above it is important to recall \eqref{eq:dimF}; this minor but necessary adjustment is missing from the original sources.

\begin{figure}[h]
\centering
\includegraphics[width=15cm]{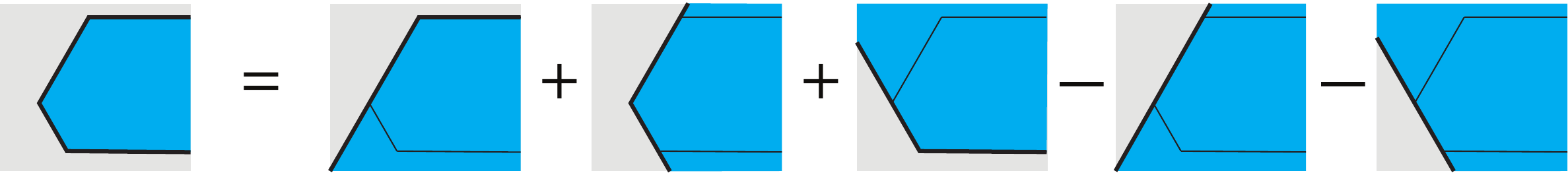}\qquad 
\end{figure}

\begin{lemma} \label{lemma:aligning}
For any preposet $q$  we have
\[
\one_{\operatorname{cone}(q)} = \sum_{\ell \textrm{ prelin. ext. of } q}  (-1)^{|q| - |\ell|} \one_{\operatorname{cone}(\ell)},
\]
where $|r|$ denotes the number of equivalence classes of elements of the preposet $r$.
\end{lemma}

\begin{figure}[h]
\centering
\includegraphics[width=15cm]{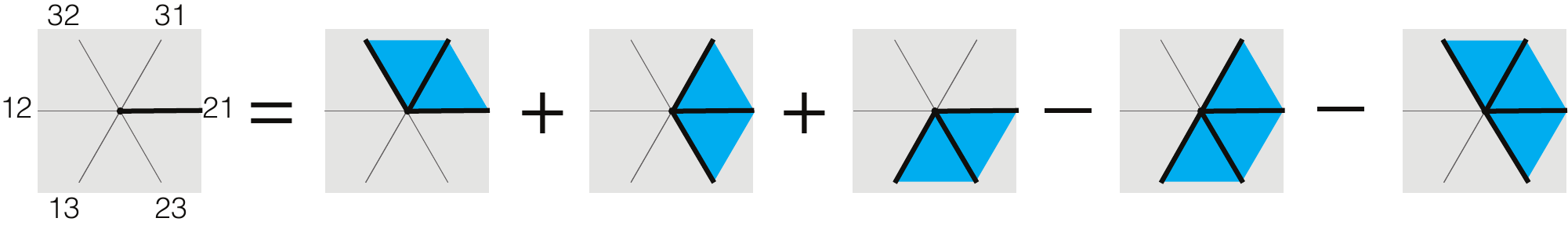}\\
\quad \,  \includegraphics[width=14cm]{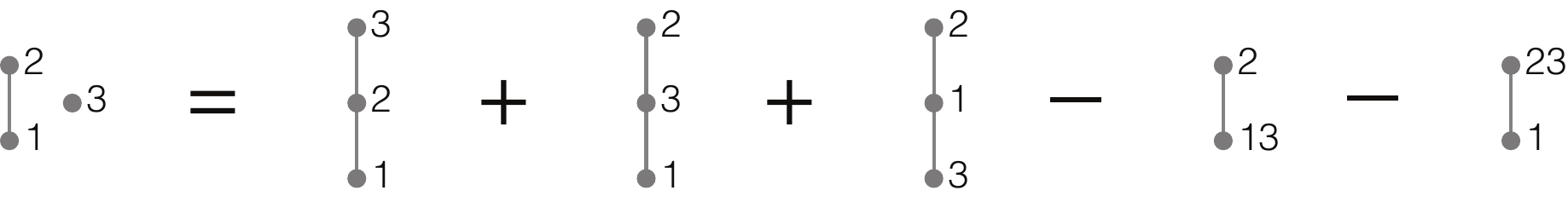}
\caption{A preposet cone equals the alternating sum of the cones of its prelinear extensions.\label{fig:cones}}
\end{figure}

\begin{proof}

Recall that the dual cone of a cone $C$ in $\R^d$ is $C^\Delta := \{d \in \R^I \, : \, \langle c,d \rangle \geq 0 \textrm{ for all } c \in C\}$, and we have $C^{\Delta\Delta} = C$. 
The dual cone to $\operatorname{cone}(q)$ is the \textbf{braid cone} $\sigma_q = \operatorname{cone}(q)^\Delta$ consisting of the points $x \in \R^I$ whose coordinates satisfy the relations of $q$, that is, $x_a \geq x_b$ whenever $a \geq b$ in $q$. 

The braid arrangement subdivides  the braid cone $\operatorname{cone}(q)^\Delta$ into the braid cones $\operatorname{cone}(\ell)^\Delta$ for the prelinear extensions $\ell$ of $q$: the relative interior of each subdividing braid cone consists of the points in $\operatorname{cone}(q)^\Delta$ whose coordinates are in a fixed relative order. The dimension of $\operatorname{cone}(q)^\Delta$ is $|q|$, so inclusion-exclusion gives the analogous relation for the braid cones
\begin{equation} \label{eq:braidcones}
\one_{\operatorname{cone}(q)^\Delta} = \sum_{\ell \textrm{ prelin. ext. of } q}  (-1)^{|q| - |\ell|} \one_{\operatorname{cone}(\ell)^\Delta}.
\end{equation}
This is illustrated in Figure \ref{fig:dualcones}, which is dual to Figure \ref{fig:cones}.

\begin{figure}[h]
\centering
\includegraphics[width=15cm]{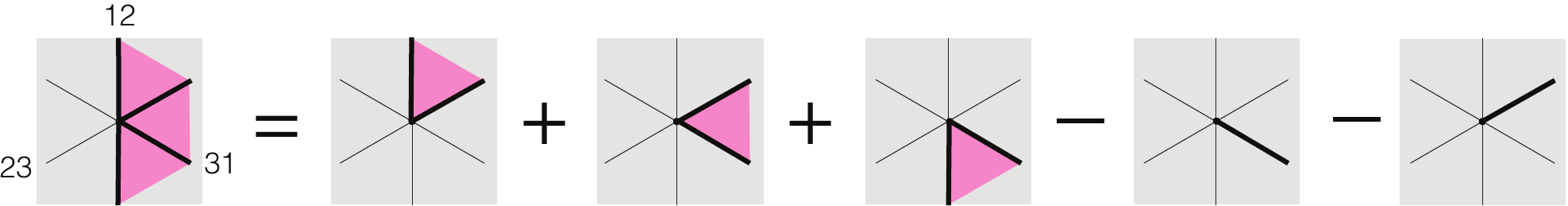}
\caption{A braid cone equals the alternating sum of the braid cones of its prelinear extensions.\label{fig:dualcones}}
\end{figure}

Now we show that the dual equation \eqref{eq:braidcones} implies the desired equation. For $x \in \R^I$ we have
\[
\one_{C}(x) = 1 \,\,  \Longleftrightarrow \,\,
x \in C \,\,  \Longleftrightarrow \,\,
 \langle d,x \rangle \leq 0 \textrm{ for all } d \in C^\Delta \,\,  \Longleftrightarrow  \,\,
 C^\Delta \cap H_x = \emptyset \,\,  \Longleftrightarrow  \,\,
 j_x(C^\Delta) = 1,
\]
where $H_x = \{ d \in \R^I \, : \, \langle d,x \rangle > 0\}$ is an open half-space and $j_x$ is the function on polyhedra given by $j_x(D) = 1$ if $D \cap H_x = \emptyset$ and $j_x(D) = 0$ otherwise. The function $j_x$ is a weak valuation for polyhedral subdivisions \cite[Prop. 4.5]{AFR}\footnote{In \cite{AFR} this statement is proved for functions on matroid polytopes, but the same proof applies to this setting.}, so for any $x \in \R^I$ we have
\[
\left(\one_{\operatorname{cone}(q)} - \sum_{\substack{\ell \textrm{ prelin.} \\ \textrm{ext. of } q}} (-1)^{|q|-|\ell|} \one_{\operatorname{cone}(\ell)}\right)(x)  \\
= j_x(\operatorname{cone}(q)^\Delta) - \sum_{\substack{\ell \textrm{ prelin.} \\ \textrm{ext. of } q}} (-1)^{|q|-|\ell|} j_x(\operatorname{cone}(\ell)^\Delta) = 0.
\]
This proves the desired result.
\end{proof}

For each polyhedron $P \in \mbf{GP}^+[I]$, define the \textbf{Brianchon-Gram generator} of $P$ to be
\[
P - \bg(P) := P - \sum_{F \leq P} (-1)^{\Ldim F}(\operatorname{cone}_F(P)) \,  \in \mbf{GP}^+[I]
\]
where the sum is taken over the relatively bounded faces $F$ of $P$. 
For each cone $C  \in \mbf{GP}^+[I]$ there is a preposet $q$ on $I$ and a translation vector $v$ such that $C = v + \operatorname{cone}(q)$; define the corresponding \textbf{aligning generator} of $C$ to be
\[
C - \al^-(C) := (v+\operatorname{cone}(q)) - \sum_{\ell \textrm{ prelin. ext. of } q}  (-1)^{|q| - |\ell|} (v+\operatorname{cone}(\ell)).
\]
Note that the summands on the right hand summation are plates.


\begin{lemma} \label{lem:gens}
The inclusion-exclusion subspace $\mbf{ie}[I]$ is generated by the Brianchon-Gram generators of the polyhedra $P \in \mbf{GP}^+[I]$ and the aligning generators of the cones $C \in \mbf{CGP}^+[I]$.
\end{lemma}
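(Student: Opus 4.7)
The plan is to verify both inclusions of the claimed equality.

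For the easy direction $\supseteq$, I would show each generator has vanishing indicator function, and then invoke the Derksen--Fink theorem (Theorem~\ref{thm: GP strong weak equiv}), which identifies $\mbf{ie}[I]$ with $\ker(\mbf{GP^+}[I] \to \val(\mbf{GP^+})[I])$. For the Brianchon--Gram generator $P - \bg(P)$, the identity $\one_P = \one_{\bg(P)}$ is exactly the Brianchon--Gram theorem (Theorem~\ref{thm:Brianchon Gram}). For the straightening generator $C - \st^-(C)$ with $C = v + \operatorname{cone}(q)$, I would prove the analogous identity
\[
\one_{\operatorname{cone}(q)} = \sum_{l}(-1)^{|q|-|l|}\,\one_{\operatorname{cone}(l)}
\]
(summed over prelinear extensions $l$ of $q$) by a direct inclusion--exclusion argument that refines the equivalence classes of $q$ one at a time.

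For the harder direction $\subseteq$, take any subdivision relation $g = P - \sum_i(-1)^{\dim P - \dim P_i}P_i$ and decompose it in two stages. First write
\[
g = \bigl(P - \bg(P)\bigr) - \sum_i (-1)^{\dim P - \dim P_i}\bigl(P_i - \bg(P_i)\bigr) + y,
\]
where $y := \bg(P) - \sum_i (-1)^{\dim P - \dim P_i}\bg(P_i) \in \mbf{CGP^+}[I]$ satisfies $\one_y = 0$ because $\bg$ preserves indicators, and the remaining terms are Brianchon--Gram generators. Expanding $y = \sum_j d_j C_j$ as a $\Z$-combination of translated preposet cones and applying straightening to each summand,
\[
y = \sum_j d_j\,\bigl(C_j - \st^-(C_j)\bigr) + z, \qquad z := \st^-(y),
\]
where the first sum is a linear combination of straightening generators and $z$ is a combination of translated OSP cones with $\one_z = 0$.

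The main obstacle is to conclude that $z = 0$ as an element of $\mbf{GP^+}[I]$, which is equivalent to the linear independence of the indicator functions of distinct translated OSP cones. My plan to establish this is to observe that $\operatorname{cone}(w,F)$ is uniquely reconstructible from its underlying set: its lineality recovers the unordered partition underlying $F$, its affine hull determines $w$, and the extreme rays modulo the lineality reveal the order of the blocks. A careful evaluation at test points in the relative interiors of the cones, organized by coarseness of the partition so that finer cones are isolated first, should force any putative linear relation among the $\one_{\operatorname{cone}(w,F)}$'s to be trivial.
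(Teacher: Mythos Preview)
Your overall architecture matches the paper's proof exactly: show each type of generator has vanishing indicator (hence lies in $\mbf{ie}[I]$ via Derksen--Fink), and for the reverse inclusion rewrite an arbitrary element first with Brianchon--Gram generators, then with straightening generators, reducing to a combination of translated OSP cones with vanishing indicator, which must then be zero by linear independence. Your explicit invocation of Theorem~\ref{thm: GP strong weak equiv} to identify $\mbf{ie}[I]$ with the kernel of $P\mapsto\one_P$ is in fact cleaner than the paper, which uses this identification implicitly.

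Two sub-steps are handled differently. For the straightening identity $\one_{\operatorname{cone}(q)}=\sum_l(-1)^{|q|-|l|}\one_{\operatorname{cone}(l)}$, the paper does not argue by refining equivalence classes; instead it passes to polar cones, observes that $\operatorname{cone}(q)^\Delta$ is subdivided by the braid arrangement into the $\operatorname{cone}(l)^\Delta$, and transfers this back via the identity $\one_C(x)=j_x(C^\Delta)$ where $j_x$ is a weak valuation. Your proposed inductive refinement is vague as stated: prelinear extensions can both split and merge blocks of $q$, so ``refining one class at a time'' does not organize the sum in an obvious way. The polarity argument is what makes this step go through cleanly.

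For the linear independence of the $\one_{\operatorname{cone}(w,F)}$, the paper simply cites \cite[Theorem~2.7]{ESS} applied to the permutahedron. Your direct approach is viable, but the induction should proceed from \emph{coarsest} to finest: a generic point of the affine hull of a coarse OSP cone lies in no strictly finer one (those have strictly smaller affine hulls), which isolates the coarse coefficients first; processing ``finer cones first'' runs into the problem that interior points of fine cones typically lie in many coarser ones.
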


\begin{proof}
Theorem \ref{thm:Brianchon Gram} and Lemma \ref{lemma:aligning} tell us that these  these generators are in $\mbf{ie}[I]$. 
Now consider any element $a \in \mbf{ie}[I]$ of the inclusion-exclusion subspace. 
Using the Brianchon-Gram generator for each polyhedron appearing in $a$, and then the aligning generator for each resulting cone, we can write $a = b + c + d$ where $b$ is a linear combination of Brianchon-Gram generators, $c$ is a linear combination of aligning generators, and 
\[
d = \sum_{i=1}^n \lambda_i (v_i + \operatorname{cone}(\ell_i))
\]
is a linear combination of plates.
Since $d=a-b-c \in \mbf{ie}[I]$, we have
\[
\sum_{i=1}^n \lambda_i \one_{v_i + \operatorname{cone}(\ell_i)} = 0.
\]
Now, \cite[Theorem 2.7]{ESS} states that the indicator functions $\one_{v + \operatorname{cone}_F(P)}$ of the translates of the tangent cones $\operatorname{cone}_F(P)$ of a polytope are linearly independent. For the permutahedron, these translates are precisely the permutahedral plates, so they are linearly independent. We conclude that  $d=0$, and the desired result follows.
\end{proof}

\subsection{The indicator Hopf monoid of extended generalized permutahedra}

Let the species $\val(\mbf{GP}^+)$ of indicator functions on $\mbf{GP}^+$ consist of the vector spaces
\begin{eqnarray*}
\val(\mbf{GP}^+)[I] & := & \espan\{\one_P \, | \, P \textrm{ is a generalized permutahedron in } \R^I\} \subset \operatorname{Hom}(\R^I, \F)\\
& \cong & \mbf{GP}^+[I] / \mbf{ie}[I].
\end{eqnarray*}
for each finite set $I$ and the natural maps between them. We say that a species morphism $\mbf{f}: \mbf{GP}^+ \to \mbf{A}$ is a \textbf{strong valuation} if the map $\mbf{f}[I]$ is a strong valuation for all $I$.

Similarly, for a subspecies $\mbf{H}$ of $\mbf{GP}^+$ define
the species $\val(\mbf{H})$ of indicator functions on $\mbf{H}$ by
\begin{eqnarray*}
\val(\mbf{H})[I] & := & \espan\{\one_P \, | \, P \in \mbf{H}[I]\} \\
& \cong & \mbf{H}[I] / (\mbf{ie}[I] \cap \mbf{H}[I]).
\end{eqnarray*}
Every strong or weak valuation of $\mbf{GP}^+$ restricts to a strong or weak valuation of $\mbf{H}$, respectively, since $\val(\mbf{H})$ is a subspecies of $\val(\mbf{GP}^+)$. 
However, the classes of strong and weak valuations may no longer agree in $\mbf{H}$.
Thus $\mbf{ie}[I] \cap \mbf{H}[I]$ may not be generated by elements of the form $P - \sum_{P_i \in \mathcal{P}} (-1)^{\dim P - \dim P_i} P_i$ for subdivisions $\{P_i\}$ of $P$ with $P_i, P \in \mbf{H}[I]$.

%
%

In this section we prove our main structural Hopf theoretic result (Theorem \ref{mainthm1}), that the Hopf monoid structure on $\mbf{GP}^+$ descends to the quotient $\mathbb{I}(\mbf{GP}^+)$.

\begin{theorem}\label{thm:quotient} 
Let $\mbf{GP}^+$ be the Hopf monoid of extended generalized permutahedra.
\begin{enumerate}
\item
The inclusion-exclusion species $\mbf{ie}$ is a Hopf ideal of $\mbf{GP}^+$.
\item 
The quotient $\val(\mbf{GP}^+) \cong  \mbf{GP}^+/\mbf{ie}$ is a Hopf monoid. 
\item
The resulting \textbf{indicator Hopf monoid of extended generalized permutahedra} is isomorphic to the Hopf monoid of weighted ordered set partitions: 
\[
\val(\mbf{GP}^+) \cong \mbf{w\Sigma^*}.
\]
\item
For any Hopf submonoid $\mbf{H} \subseteq \mbf{GP}^+$, the subspecies $\val(\mbf{H}) \subseteq \val(\mbf{GP}^+)$ is a Hopf quotient of $\mbf{H}$, namely, $\val(\mbf{H}) \cong \mbf{H}/(\mbf{ie} \cap \mbf{H})$.
\end{enumerate}
\end{theorem}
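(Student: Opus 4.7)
The strategy is to exhibit a Hopf morphism $\alpha:\mbf{GP^+}\to \mbf{w\Sigma^*}$ whose kernel is precisely $\mbf{ie}$, and then invoke the First Isomorphism Theorem for Hopf monoids (Theorem \ref{th:isothm1}) to obtain parts (1), (2), (3) in one stroke. The natural candidate, built from the two Hopf morphisms of Section 3, is the composition
\[
\alpha \;:\; \mbf{GP^+} \xrightarrow{\bg} \mbf{CGP^+} \xrightarrow{\operatorname{cone}^{-1}} \mbf{wPP} \xrightarrow{\st} \mbf{w\Sigma^*},
\]
which is a Hopf morphism by Propositions \ref{prop:bg} and \ref{prop:st} and Theorem \ref{thm:CGP}. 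The map $\alpha$ is surjective because every weighted ordered set partition $(w,F)\in \mbf{w\Sigma^*}[I]$ is the image of the OSP cone $\operatorname{cone}(w,F)\in \mbf{GP^+}[I]$: on this cone, $\bg$ acts as the identity (Lemma \ref{lem:bg^2}), and then $\st$ acts as the identity since $(w,F)$ is already a totally ordered weighted preposet (Lemma \ref{lem:st^2}).

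For the inclusion $\mbf{ie}\subseteq \ker \alpha$, I would reduce to the Brianchon--Gram and straightening generators using Lemma \ref{lem:gens}. On a Brianchon--Gram generator $P-\bg(P)$, the idempotency $\bg\circ\bg=\bg$ (Lemma \ref{lem:bg^2}) gives $\alpha(P)-\alpha(\bg(P))=\st(\bg(P))-\st(\bg(P))=0$. On a straightening generator $C-\st^-(C)$, the Brianchon--Gram step acts as the identity on every cone involved (Lemma \ref{lem:bg^2}), and then Lemma \ref{lem:st^2} ensures that $\st$ preserves each translated OSP cone in $\st^-(C)$; the signs $(-1)^{|q|-|l|}$ built into $\st^-$ then match the contributions of $\st$ expanding the translated preposet cone $C$ into its prelinear extensions, so the whole expression vanishes. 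For the reverse inclusion $\ker \alpha\subseteq \mbf{ie}$, I would take $x\in\ker\alpha$ and use Lemma \ref{lem:gens} to reduce $x$ modulo the generators of $\mbf{ie}$ to a linear combination $\sum_i c_i(v_i+\operatorname{cone}(l_i))$ of translated OSP cones; since $\alpha$ acts as the identity on such cones and $\alpha(x)=0$, the linear independence of indicator functions of translated OSP cones established in \cite[Theorem 2.7]{ESS} (and already invoked in the proof of Lemma \ref{lem:gens}) forces each $c_i=0$, so $x\in\mbf{ie}$.

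Having established $\ker \alpha=\mbf{ie}$ together with surjectivity, Theorem \ref{th:isothm1} immediately yields parts (1), (2), and (3): $\mbf{ie}$ is a Hopf ideal of $\mbf{GP^+}$, the quotient $\val(\mbf{GP^+})=\mbf{GP^+}/\mbf{ie}$ is a Hopf monoid, and $\val(\mbf{GP^+})\cong \mbf{w\Sigma^*}$. Part (4) follows from the same theorem applied to the composition $\phi:\mbf{H}\hookrightarrow \mbf{GP^+}\twoheadrightarrow \val(\mbf{GP^+})$: as a composition of Hopf morphisms, $\phi$ is a Hopf morphism whose image is precisely $\val(\mbf{H})\subseteq\val(\mbf{GP^+})$, so $\val(\mbf{H})\cong \mbf{H}/\ker\phi$ is a Hopf quotient of $\mbf{H}$. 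I expect the principal obstacle to be the sign bookkeeping in the straightening-generator case: reconciling the $(-1)^{|q|-|l|}$ signs in the definition of $\st^-$ with those that arise when $\st$ expands a translated preposet cone is what forces the codomain to carry the quasishuffle product of $\mbf{w\Sigma^*}$, and it is the key computation that makes the whole picture coherent.
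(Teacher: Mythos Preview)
Your strategy is exactly the paper's, but your morphism $\alpha=\st\circ\operatorname{cone}^{-1}\circ\bg$ is missing one ingredient, and without it the kernel computation fails on the straightening generators. Concretely, for a preposet cone $C=\operatorname{cone}(q)$ your map gives
\[
\alpha\bigl(C-\st^-(C)\bigr)
\;=\;
\st(q)\;-\;\sum_{l}(-1)^{|q|-|l|}\st(l)
\;=\;
\sum_{l}l\;-\;\sum_{l}(-1)^{|q|-|l|}l
\;=\;
\sum_{l}\bigl(1-(-1)^{|q|-|l|}\bigr)\,l,
\]
which is nonzero whenever $q$ has a prelinear extension $l$ with $|q|-|l|$ odd. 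Thus $\mbf{ie}\not\subseteq\ker\alpha$, and your claim that ``the signs $(-1)^{|q|-|l|}$ built into $\st^-$ then match the contributions of $\st$'' is simply false: the straightening morphism $\st$ carries no signs at all.

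The fix, which the paper implements, is to interpose the sign automorphism $\sgn$ of $\mbf{wPP}$ from Lemma~\ref{lem:sgn} and use
\[
\varphi \;=\; \st\circ\sgn\circ\operatorname{cone}^{-1}\circ\bg.
\]
Now $\sgn$ turns $q-\sum_l(-1)^{|q|-|l|}l$ into $(-1)^{|q|}\bigl(q-\sum_l l\bigr)$, and $\st$ annihilates this since $\st(q)=\sum_l l$ and $\st(l)=l$. The rest of your argument (Brianchon--Gram generators via $\bg\circ\bg=\bg$, the reverse inclusion via reduction to OSP cones and their linear independence, surjectivity, and the First Isomorphism Theorem for parts (1)--(4)) goes through unchanged once $\sgn$ is inserted; note that with $\sgn$ present one has $\varphi(d)=(-1)^{|I|}d$ on OSP cones rather than $\varphi(d)=d$, but this is still a bijection and the linear-independence step works identically. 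Your closing remark correctly flagged the sign bookkeeping as the crux; the point is that it cannot be made to work without this extra automorphism.
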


\begin{proof}
Consider the composition $\varphi$ of the Brianchon-Gram morphism (Proposition \ref{prop:bg}), the isomorphism between conical generalized permutahedra and weighted preposets (Theorem \ref{thm:CGP}), the sign automorphism of weighted preposets (Lemma \ref{lem:sgn}), and the aligning morphism (Proposition \ref{prop:st}) as follows:
\begin{eqnarray*} 
\varphi &:& \mbf{GP}^+ \xrightarrow{\bg} \mbf{CGP}^+ \xrightarrow[\cong]{\operatorname{cone}^{-1}} \mbf{wPP} \xrightarrow{\sgn} \mbf{wPP} \xrightarrow{\al} \mbf{w\Sigma^*} \\
& & P  \longmapsto (-1)^{|I| - \dim \text{Lin}(P)} \sum_{\substack{F \leq P \\ \operatorname{cone}_F(P) = \operatorname{cone}(w,q)}}  \sum_{\substack{(v,\ell) \text{ prelin. ext. } \\ \text{ of } (w,q)}} (v,\ell),
\end{eqnarray*}
for a generalized permutahedron $P \in \mbf{GP}^+[I]$ with lineality space Lin$(P)$. To verify the correctness of the sign, notice that the sign on a summand $(v,\ell)$ in $\varphi(P)$ is $(-1)^{\Ldim F}(-1)^{|q|}$, recall \eqref{eq:dimF}, observe that $\text{Lin}(P) = \text{Lin}(F)$, and notice that the dimension of $F$ equals the dimension of the lineality space of $\operatorname{cone}_F(P) = \operatorname{cone}(w,q)$, which is $|I|-|q|$. 

Recall that plates are in bijection with weighted ordered set partitions, and notice that each plate $d=\operatorname{cone}(v,\ell)$ satisfies $\varphi(d) = (-1)^{|l|}(v,\ell)$, so $\varphi$ is surjective. 
We claim that the kernel of $\varphi$ is the inclusion-exclusion species:
\[
\ker \varphi = \mbf{ie}
\]

\smallskip

\noindent
$\supseteq$: Since $\bg \circ \bg = \bg$ by Lemma \ref{lem:bg^2}, we have
\[
\varphi(P-\bg(P)) = 0
\]
for all polyhedra $P$, so all Brianchon-Gram generators are in the kernel of $\varphi$.
Also notice that for any cone $C = \operatorname{cone}(q)$ (assuming the apex of $C$ contains $0$, without loss of generality), we have 
\[
C-\al^-(C) \xrightarrow{\bg}
C-\al^-(C) \xrightarrow[\cong]{\operatorname{cone}^{-1}}
q - \sum_{\substack{\ell \textrm{ prelin.} \\ \textrm{ ext. of } q}}  (-1)^{|q| - |\ell|} \ell  \xrightarrow{\sgn}
 (-1)^{|q|} (q - \al(q))  \xrightarrow{\al}
0
\]
 since $\al$ is idempotent, so
 \[
 \varphi(C-\al^-(C)) = 0
 \]
 as well.
Therefore, in light of Lemma \ref{lem:gens}, the inclusion-exclusion species is in the kernel of $\varphi$.


\smallskip

\noindent
$\subseteq$: 
Consider any element $a \in \ker \varphi$. We can use the Brianchon-Gram generators to rewrite each summand of $a$ in terms of its affine tangent cones, and then the aligning relations to write each of those cones in terms of plates. Therefore we have
\[
a=b+c+p
\]
where $b$ is a linear combination of Brianchon-Gram generators, $c$ is a linear combination of aligning generators, and $p$ is a linear combination of plates.
Since $a,b,c \in \ker \varphi$ we have $p \in \ker \varphi$ so $\varphi(p)=0$. 
But each plate $d=\operatorname{cone}(v,\ell)$ satisfies $\varphi(d) = (-1)^{|l|} (v,\ell)$ and there are no linear relations among weighted ordered set partitions in $\mathbf{w\Sigma^*}$, so in fact we must have $p = 0$ and $a = b+c \in \mbf{ie}$, as desired.

\bigskip

Claims 1--3 then follow from the First Isomorphism Theorem of Hopf moniods, which we prove in Theorem \ref{thm:Noether}. Furthermore, Claim 2 implies that the projection map $\one_{(-)}: \mbf{GP}^+[I] \to \mathbb I(\mbf{GP}^+)$ which sends a polytope to its indicator function is a Hopf monoid morphism. The restriction $\one_{(-)}|_{\mbf{H}}$ of this morphism to the submonoid $\mbf{H}$ has image $\mathbb I(\mbf{H})$ and kernel $(\mbf{ie} \cap \mbf{H})$, so Claim 4 follows by applying the First Isomorphism Theorem to this morphism.
\end{proof}

\begin{corollary}
The antipode of the indicator Hopf monoid of generalized permutahedra $\val(\mbf{GP}^+)$ is given by 
\[
s_I(\one_{P}) = (-1)^{|I|-\dim P} \one_{P^\circ} \qquad \text{ for } P \in \mbf{GP}^+[I], 
\]
where $P^\circ$ is the relative interior of $P$.
\end{corollary}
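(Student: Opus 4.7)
The plan is to apply Takeuchi's antipode formula directly inside $\mathbb I(\mbf{GP}^+)$, and then to identify the resulting signed sum of face indicators using a compactly supported Euler characteristic computation together with the Euler relation for a polyhedron.

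By Theorem~\ref{mainthm1}, $\mathbb I(\mbf{GP}^+)$ is a Hopf monoid whose product and coproduct are induced from those of $\mbf{GP}^+$, namely $\one_P \cdot \one_Q = \one_{P \times Q}$ and $\Delta_{S,T}(\one_P) = \one_{P|_S} \otimes \one_{P/_S}$ (interpreted as $0$ when $P$ is unbounded in direction $e_{S,T}$). Takeuchi's universal formula for the antipode therefore reads
\[
s_I(\one_P) = \sum_{k \geq 1}\, \sum_{\substack{S_1 \sqcup \cdots \sqcup S_k = I \\ S_j \neq \emptyset}} (-1)^k\, \one_{P_{S_1, \ldots, S_k}},
\]
where $P_{S_1, \ldots, S_k}$ is the iterated face obtained by successively maximizing $e_{S_1, T_1}, e_{S_2, T_2}, \ldots$ on $P$, taken to be $0$ if the process fails at some step. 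Collecting terms by the face $F$ produced gives
\[
s_I(\one_P) = \sum_{\emptyset \neq F \leq P} \mu(F)\, \one_F, \qquad \mu(F) := \sum_{\substack{(S_1, \ldots, S_k) \\ P_{S_1, \ldots, S_k} = F}} (-1)^k.
\]

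Next, I would identify $\mu(F)$ with the compactly supported Euler characteristic of an open cone. Each set composition $(S_1, \ldots, S_k)$ indexes an open cell of dimension $k$ in the braid fan of $\R^I$. Because the braid fan refines the normal fan of every generalized permutahedron, the set compositions producing $F$ correspond exactly to the open braid cells contained in the relative interior of the normal cone $N_F(P)$, and these cells partition $\relint(N_F(P))$. Since $\relint(N_F(P))$ is an open convex cone of dimension $|I| - \dim F$, hence homeomorphic to $\R^{|I|-\dim F}$ with compactly supported Euler characteristic $(-1)^{|I|-\dim F}$, we obtain $\mu(F) = (-1)^{|I|-\dim F}$.

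Finally, the classical Euler relation for any polyhedron $P$,
\[
\one_{P^\circ} = \sum_{\emptyset \neq F \leq P} (-1)^{\dim P - \dim F}\, \one_F,
\]
rearranges to $\sum_F (-1)^{|I|-\dim F}\, \one_F = (-1)^{|I|-\dim P}\, \one_{P^\circ}$, matching the expression above and giving the stated antipode formula. The main technical obstacle is the Euler characteristic identification for $\mu(F)$: one must check, especially for extended (possibly unbounded) generalized permutahedra, that braid cells outside the support of $P$'s normal fan correspond precisely to those set compositions for which the iterated face operation fails (so they contribute zero to Takeuchi and do not disturb the cell decomposition of $\relint(N_F(P))$), and that the dimension bookkeeping aligns the $(-1)^k$ weights with the compactly supported Euler characteristic.
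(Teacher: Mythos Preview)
Your argument is correct, and its final step---using the Euler relation $\one_{P^\circ} = \sum_{F \leq P} (-1)^{\dim P - \dim F}\,\one_F$ to collapse the signed face sum---is exactly what the paper does. The difference is in how you arrive at that face sum. The paper simply quotes Aguiar and Ardila's antipode formula for $\mbf{GP^+}$, namely $s_I(P) = (-1)^{|I|}\sum_{Q \leq P}(-1)^{\dim Q}\,Q$, and then passes to the quotient $\mathbb I(\mbf{GP^+})$; you instead reprove this formula from scratch by running Takeuchi, grouping by the resulting face $F$, and computing $\mu(F)$ via the compactly supported Euler characteristic of $\relint N_F(P)$, partitioned by braid cells. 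That Euler-characteristic computation \emph{is} the heart of the Aguiar--Ardila proof in \cite{AA17}, so you have recovered their argument rather than bypassed it. Your self-contained route is perfectly sound and has the virtue of not relying on an external citation; the paper's route is shorter because it does. The ``technical obstacle'' you flag for unbounded $P$---matching set compositions where the coproduct vanishes with braid cells outside the support of the normal fan---is real and is handled in \cite{AA17}; it is not difficult, but you are right that it needs to be said.
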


\begin{proof}
Aguiar and Ardila \cite{AA17} showed that the antipode in $\mbf{GP}^+$ is given by
\[
s_I(P) = (-1)^{\abs{I}}\sum_{Q \leq P} (-1)^{\dim Q} \, Q
\]
summing over the faces of $P$. Using the inclusion-exclusion relations that hold in the quotient $\val(\mbf{GP}^+)$, this simplifies to the desired result.
\end{proof}

For an extended generalized permutahedron $P$ in $\R^I$ and an ordered set partition $\ell$ of $I$, let the $\ell$-maximal face $P_\ell$ of $P$ be the $f$-maximal face $P_f$ for any vector $f$ whose entries are in the same relative order as $\ell$, that is, 
$f_i < f_j$ for $i \in \ell_a, j \in \ell_b, a < b$ and 
$f_i = f_j$ for $i,j \in \ell_a$.

\begin{proposition} \label{prop:phi}
The isomorphism of Theorem \ref{thm:quotient}.3 is realized by the map
\begin{eqnarray*}
\varphi: \mathbb{I}(\mbf{GP}^+) & \xrightarrow{\,\, \cong \,\, } &  \mbf{w\Sigma^*} \\
\one_P  &\longmapsto&   (-1)^{|I| - \dim \mathrm{Lin}(P)}\sum_{\substack{\ell \, : \, P_\ell \text{ is } \\ \text{ rel. bounded}}} (v_{P,\ell},\ell).
\end{eqnarray*}
for an extended generalized permutahedron $P \in \mbf{GP}^+[I]$, where the $\ell$-maximal face $P_\ell$ of $P$ lies on the subspace given by equations $\sum_{l \in \ell_i} x_l = v_{P,\ell}(\ell_i)$ for each block $\ell_i$ of $\ell$.
In particular, for the plate $P=\text{cone}(w,\ell)$ of a weighted ordered set partition $(w,\ell)$, we have
\[
\one_{\text{cone}(w,l)} \longmapsto (-1)^{|I|-\ell}(w,\ell).
\]
\end{proposition}

\begin{proof}
Consider a relatively bounded face $F$ of $P$ and let $\operatorname{cone}_F(P) = \operatorname{cone}(w,q) = w^q + \operatorname{cone}(q)$. 
Each prelinear extension $\ell$ of $q$ corresponds to an open face $\sigma_\ell^o$ of the braid fan contained in the open dual preposet cone $\sigma_q^o$, that is, an ordered set partition $\ell$ such that $P_\ell = F$. The corresponding prelinear extension $(v,\ell)$ of $(w,q)$ is obtained by grouping the weights $w$ of $q$ among the parts of $\ell$; since 
$\operatorname{cone}_F(P) = \operatorname{cone}(w,q)$ these are the weights described in the statement of the proposition.
\end{proof}

\begin{remark}
Eur, Sanchez, and Supina \cite{ESS} constructed the universal valuation $\mathcal{F}$ for extended generalized $\Phi$-permutahedra for any finite reflection group.  
When $\Phi$ is the symmetric group $S_n$, their map $\mathcal{F}$, once interpreted combinatorially, is identical to the map $\varphi$ of Proposition \ref{prop:phi}. Their work thus explains why our map $\varphi$ is a valuation; this is equivalent to the inclusion $\supseteq$ in the proof of Theorem \ref{thm:quotient}, which we reprove for completeness. Our work reveals that their map $\mathcal{F}$ is not just a linear map, but also a Hopf morphism. 
\end{remark}

\begin{remark}
The isomorphism $\varphi$ of Proposition \ref{prop:phi} has an unexpected sign twist, which we illustrate in the smallest interesting example. In $\mbf{\Sigma^*}$, the product of the trivial ordered set partitions $a \in \mbf{w\Sigma^*}[\{a\}]$ and $b \in \mbf{w\Sigma^*}[\{b\}]$ is
\[
a \cdot b = a|b + b|a + ab \, \in \mbf{w\Sigma^*}[\{a,b\}],
\]
whereas the corresponding plates, $\textrm{cone}(a) = \{0\} \subset \R^{\{a\}}$ and 
$\textrm{cone}(b) = \{0\} \subset \R^{\{b\}}$, when considered in $\mathbb{I}(\mbf{GP}^+)$, satisfy
\begin{eqnarray*}
\one_{\textrm{cone}(a)} \cdot \one_{\textrm{cone}(b)} 
&=& \one_{(0,0)}  \subset \R^{\{a,b\}} \\
&=& \one_{\R_{\geq 0}(e_a-e_b)} + \one_{\R_{\leq 0}(e_a-e_b)} - \one_{{\R}(e_a-e_b)} \\
&=& \one_{\textrm{cone}(b|a)} + \one_{\textrm{cone}(a|b)} - \one_{\textrm{cone}(ab)},
\end{eqnarray*}
which matches the expression for $a \cdot b$ after a sign correction.
\end{remark}

\subsection{The extended McMullen species and the indicator Hopf monoid of posets}\label{sec:indicator2}

Although it is less relevant to our goal of studying valuations on generalized permutahedra, the following version of Theorem \ref{thm:quotient} may be of independent interest. 
Consider the following extension of the inclusion-exclusion species that also identifies a polyhedron $P \subseteq \R^I$ with any translate $v+P$ of it, where $v \in \R^I$, as is done in the McMullen polytope algebra.

\begin{definition}
The \textbf{extended McMullen species} consists of the vector subspaces
\[
\mbf{Mc}^+[I] 
=
\mbf{ie}[I] + 
\espan\left(P - (P+v) \, | \, P \in \mbf{GP}^+[I], v \in \R^I \right) \subset \mbf{GP}^+[I]
\]
for each finite set $I$ and the natural maps between them. 
\end{definition}


Let the \textbf{indicator Hopf monoid of (pre)posets} $\mathbb{I}(\mbf{(P)P})$ be the submonoid of $\mathbb{I}(\mbf{GP}^+)$ generated by (pre)poset cones.


\begin{theorem}\label{thm:McMullen} 
Let $\mbf{GP}^+$ be the Hopf monoid of extended generalized permutahedra.
\begin{enumerate}
\item
The extended McMullen species $\mbf{Mc}^+$ is a Hopf ideal of $\mbf{GP}^+$.
\item
The quotient Hopf monoid 
is isomorphic to the Hopf monoid of ordered set partitions: 
\[
\mbf{GP}^+/\mbf{Mc}^+ \cong \mbf{\Sigma^*}.
\]
\item
For any Hopf submonoid $\mbf{H} \subseteq \mbf{GP}^+$ the 
subspecies of $\mbf{GP}^+/\mbf{Mc}^+$ generated by the images of the indicator functions of polyhedra in $\mbf{H}$ 
is a Hopf quotient of $\mbf{H}$,
namely, $\mbf{H}/(\mbf{Mc}^+ \cap \mbf{H})$.
\item
The quotient Hopf monoid $\mbf{GP}^+/\mbf{Mc}^+$
is isomorphic to the indicator Hopf monoid of preposet cones and 
to the indicator Hopf monoid of poset cones:
\[
\mbf{GP}^+/\mbf{Mc}^+ \cong \mathbb{I}(\mbf{PP}) \cong \mathbb{I}(\mbf{P}).
\]
\end{enumerate}
\end{theorem}

\begin{proof}
We can compose the morphisms $\varphi: \mbf{GP}^+ \rightarrow \mbf{w\Sigma^*}$ with the projection $\mbf{w\Sigma^*} \rightarrow \mbf{\Sigma^*}$ that drops the weights, or equivalently, translates the plates to the origin. 
The resulting morphism $\psi: \mbf{GP}^+ \rightarrow \mbf{\Sigma^*}$ is surjective, and we claim that $\ker \psi = \mbf{Mc}^+[I]$, following Theorem \ref{thm:quotient}.

\medskip

\noindent 
$\supseteq$: We already saw that $\mbf{ie} \subseteq \ker \varphi \subseteq \ker \psi$, and $\psi(P) = \psi(v+P)$ since $P$ and $v+P$ have the same Brianchon-Gram decomposition up to translation.

\medskip

\noindent 
$\subseteq$: Take $a \in \ker \psi$. Analogously to Theorem \ref{thm:quotient}, we can write $a=b+c+d+p$ where $b, c,$ and $d$ are linear combinations of Brianchon-Gram generators, aligning generators, and translation generators ($P - (v+P)$) of $\mbf{Mc}^+[I]$, respectively, and $p$ is a linear combination of centered plates. Then  $p = a-b-c-d \in \ker \psi$ so $\psi(p) = 0$. But each centered plate $\operatorname{cone}(\ell)$ satisfies $\psi(\operatorname{cone}(\ell)) = (-1)^{|\ell|} \ell$ and there are no linear relations among ordered set partitions in $\mathbf{\Sigma^*}$, so in fact we must have $p = 0$ and $a = b+c+d \in \mbf{Mc}^+[I]$ as desired.

Again, the first three claims follow by the first isomorphism theorem of Hopf monoids.

4. The Brianchon-Gram theorem guarantees that the quotient $\mbf{GP}^+/\mbf{Mc}^+$ is spanned by the images of the preposet cones. Applying  Theorem \ref{thm:McMullen}.3 to the Hopf submonoid $\mbf{PP} \cong \mbf{CGP}^+_0 \subset \mbf{GP}^+$
of preposet cones, we get:
\[
\mbf{GP}^+/\mbf{Mc}^+ \cong \mbf{PP}/(\mbf{Mc}^+ \cap \mbf{PP}).
\]
But all preposet cones are centered at the origin, so $\mbf{Mc}^+ \cap \mbf{PP} = \mbf{ie} \cap \mbf{PP}$, and thus 
\[
\mbf{GP}^+/\mbf{Mc}^+ \cong \mbf{PP}/(\mbf{ie} \cap \mbf{PP}) = \mathbb{I}(\mbf{PP})
\]
by Theorem \ref{thm:quotient}.4, as desired. The isomorphism
$\mathbb{I}(\mbf{PP}) \cong \mathbb{I}(\mbf{P})$ is shown in Proposition \ref{prop:P=PP}.
 \end{proof}


With different goals in mind, Bastidas \cite{Bastidas20} proved a result analogous to part 1 of Theorem \ref{thm:McMullen} for the quotient $\mbf{GP}/\mbf{Mc}$ where only bounded polytopes are allowed. The difference between these two quotients may seem small at first sight, but their behavior is very different. For instance, every \textbf{bounded} generalized permutahedron $P$ in $\R^I$  maps to the same element of $\mbf{GP}^+/\mbf{Mc}^+ \cong \mbf{\Sigma}^*$, namely to
$(-1)^{|I|}\sum_{\ell \in \mbf{\Sigma^*}[I]} \ell$, thanks to the following result.

\begin{proposition} \label{prop:psi}
The image of an extended generalized permutahedron $P \in \mbf{GP}^+[I]$ in the quotient $\mbf{GP}^+/\mbf{Mc}^+ \cong \mbf{\Sigma^*}$ under the isomorphism of Theorem \ref{thm:McMullen}.4 is
\begin{eqnarray*}
\mbf{GP}^+ & \xrightarrow{\,\, \cong \,\, } & \mbf{GP}^+/\mbf{Mc}^+ \cong  \mbf{\Sigma^*} \\
P  &\longmapsto&   (-1)^{|I| - \dim \mathrm{Lin}(P)}\sum_
{\substack{\ell \, : \, P_\ell \text{ is } \\ \text{ rel. bounded}}} \ell.
\end{eqnarray*}
\end{proposition}

\begin{proof}
This follows readily from Proposition \ref{prop:phi}.
\end{proof}

\section{Cofreeness and universality} \label{sec:universal}

Aguiar and Ardila showed that many Hopf monoids on combinatorial objects embed into the Hopf monoid of extended generalized permutahedra \cite{AA17}. These include Hopf monoids of matroids, graphs, posets, multigraphs, simplicial complexes, and building sets, among others. 
Their work suggests that generalized permutahedra may satisfy some universality property in the category of Hopf monoids. We prove a concrete result in this direction by describing a universal property that characterizes the quotients $\mathbb I(\mbf{GP}^+) \cong \mbf{GP}^+/\mbf{ie}$ and 
$\mbf{GP}^+/\mbf{Mc}^+$.

This section assumes familiarity with the notion of cofree Hopf monoids as developed by Aguiar and Mahajan \cite{AM10}. We note that they proved the analogous result to Theorem \ref{thm: universality of Sigma} for $\mbf{\Sigma}^*$, which is isomorphic to $\mbf{GP}^+/\mbf{Mc}^+$.
In the Appendix, and Section \ref{sec:cofree} in particular, we summarize the relevant definitions and constructions.

Let $\mbf{E}^+$ and $\mbf{E}_{\mathbb{F}[t]}^+$ be the species with vector spaces
\[
\mbf{E}^+[I] = 
\begin{cases}
\langle 0 \rangle & \text{ if } I = \emptyset \\
\mathbb{F} & \text{ if } I \neq \emptyset,
\end{cases}
\qquad \qquad 
\mbf{E}_{\mathbb{F}[t]}^+[I] = 
\begin{cases}
\langle 0 \rangle & \text{ if } I = \emptyset \\
\mathbb{F}[t] & \text{ if } I \neq \emptyset.
\end{cases}
\]
and the natural maps between them, where $\mathbb{F}[t]$ is the polynomial ring with coefficients in $\mathbb{F}$.
Define $\mbf{E}_{\mathbb{F}\{t\}}^+$ similarly, where $\mathbb{F}\{t\}$ is the ring of \textbf{generalized polynomials} $\sum_{i=1}^n a_i t^{r_i}$ where $a_i \in \mathbb{F}$ and $r_i \in \R$. These species have the structure of positive monoids, with product given by the multiplication in the field or (generalized) polynomial ring.

\begin{theorem} \label{thm:Mcofree} The Hopf monoids $\mbf{\Sigma^*}$ and
$\mbf{GP}^+/\mbf{Mc}^+$ are cofree. They are isomorphic to the cofree Hopf monoid on $\mbf{E}^+$.
\end{theorem}

\begin{proof}
The first two Hopf monoids are isomorphic by Theorem \ref{thm:McMullen}.3. The second isomorphism was shown by Aguiar and Mahajan \cite[Proposition 12.58]{AM10} and it follows readily from the definitions.
\end{proof}

\begin{theorem} \label{thm:cofree} The Hopf monoids $\mbf{w\Sigma^*}$ and
$\mathbb I(\mbf{GP}^+) \cong \mbf{GP}^+/\mbf{ie}$ are cofree. They are isomorphic to the cofree Hopf monoid on $\mbf{E}_{\mathbb{F}\{t\}}^+$.
\end{theorem}

\begin{proof} 
We use the construction of cofree Hopf monoids explained in the Appendix.
Consider the species morphism 
\begin{eqnarray*}
\mbf{w\Sigma^*} & \longrightarrow & \mathcal{T}(\mbf{E}_{\mathbb{F}\{t\}}^+) \\
((w_1, \ldots, w_k), \ell_1 | \cdots | \ell_k) &\longmapsto & 
 (\ell_1 | \cdots | \ell_k, t^{w_1}\otimes \cdots \otimes t^{w_k}).
 \end{eqnarray*}
The pairs $(\ell_1 | \cdots | \ell_k, t^{w(\ell_1)} \otimes \cdots \otimes t^{w(\ell_k)})$ form a basis for the cofree Hopf monoid $\mathcal{T}(\mbf{E}_{\mathbb{F}\{t\}}^+)$, so this is a species isomorphism. Comparing the Hopf structures of $\mbf{w\Sigma^*}$ and $\mathcal{T}(\mbf{E}_{\mathbb{F}\{t\}}^+)$, described in Definition \ref{def:wSigma*} and Section \ref{sec:cofree}, immediately reveals that this is actually a Hopf monoid isomorphism.
\end{proof}

\begin{definition} \label{def:character}
A \textbf{character} $\zeta$ on a connected Hopf monoid $\mbf{H}$ consists   of maps $\zeta_I: \mbf{H}[I] \to \mathbb{F}$ that are natural, multiplicative, and unital in the sense of Definition \ref{def:character}. Similarly, a \textbf{(generalized) polynomial character} on $\mbf{H}$ consists of maps to the ring of (generalized) polynomials with the same properties.
\end{definition}

We define the \textbf{canonical character} $\beta$
on the Hopf monoid of ordered set partitions $\mbf{\Sigma}^*$ by
 \[\beta_I(\ell) = \begin{cases}
 1 & \text{if  $\ell$ has length one,} \\
 0 & \text{otherwise}.
 \end{cases} \]
Equivalently, we define the \textbf{canonical character} $\beta$ on
$\mbf{GP}^+/\mbf{Mc}^+$ by 
by
 \[
 \beta([\one_P]) = \begin{cases}
 (-1)^{|I|-\dim \textrm{Lin}(P)} 
 & \text{if $P$ is relatively bounded} \\ 
 0 & \text{if $P$ is relatively unbounded}.
 \end{cases} 
 \]
 for each extended generalized permutahedron $P$,  where $\textrm{Lin}(P)$ is the lineality space of $P$. 
 This is well-defined and matches the canonical character of $\mbf{\Sigma^*}$ by Proposition \ref{prop:psi}.

 \begin{theorem}\label{thm: universality of Sigma}
The terminal object in the category of Hopf monoids with characters is $(\mbf{GP}^+/\mbf{Mc}^+, \beta)$. 

Explicitly:
For any connected Hopf monoid $\mbf{H}$ and any character $\zeta$ on $\mbf{H}$,  there exists a unique Hopf morphism $\hat{\zeta}: \mbf{H} \to \mbf{GP}^+/\mbf{Mc}^+$ such that $\beta \circ \hat{\zeta} = \zeta$.
\end{theorem}

\begin{proof} 
A character is equivalent to a multiplicative map from $\mbf{H}_+$ to $\mbf{E}^+$. The result follows from Theorem \ref{thm: cofree universality}.
\end{proof}

Similarly, we define the \textbf{canonical} (generalized polynomial) \textbf{character} $\beta$ on the Hopf monoid of weighted ordered set partitions $\mbf{w\Sigma}^*$ by
 \[\beta_I(w, \ell) = \begin{cases}
 t^{w_1} & \text{if  $\ell$ has length one,} \\
 0 & \text{otherwise}.
 \end{cases} \]
Equivalently, we define the \textbf{canonical} (generalized polynomial) \textbf{character} $\beta$ on the indicator Hopf monoid of extended generalized permutahedra $\mathbb{I}(\mbf{GP}^+)$ by
 \[
 \beta(\one_P) = \begin{cases}
 (-1)^{|I|-\dim \textrm{Lin}(P)} t^{p} 
 & \text{if $P$ is relatively bounded and lies on hyperplane $\displaystyle \sum_{i \in I} x_i = p$ in $\R^I$,} \\ 
 0 & \text{if $P$ is relatively unbounded}.
 \end{cases} 
 \]
 where $\textrm{Lin}(P)$ is the lineality space of $P$. 
 This is well-defined and matches the canonical character of $\mbf{w\Sigma^*}$ by Proposition \ref{prop:phi}. We obtain the following analog to Theorem \ref{thm: universality of Sigma}.

%
%

\begin{theorem}\label{thm: universality of wSigma}
The terminal object in the category of Hopf monoids with generalized polynomial characters is $(\mathbb{I}(\mbf{GP}^+), \beta)$, the indicator Hopf monoid of extended generalized permutahedra with the canonical character.
%
%
\end{theorem}

\begin{proof} A generalized polynomial character is equivalent to a multiplicative map from $\mbf{H}_+$ to $\mbf{E}_{\mathbb{F}\{t\}}^+$. The result follows from Theorem \ref{thm: cofree universality}.
\end{proof}

Similarly, the terminal object in the category of Hopf monoids with polynomial characters is $(\mathbb{I}(\mbf{GP_\mathbb{N}}^+), \beta)$, where  $\mbf{GP}_\mathbb{N}^+ \subset \mbf{GP}^+$ is the Hopf monoid of \textbf{natural} extended generalized permutahedra for which the affine hulls of their faces are non-negative integral translates of root subspaces. Equivalently, these are the generalized permutahedra whose submodular function take an non-negative integral values.

The universality Theorems \ref{thm: universality of Sigma} and \ref{thm: universality of wSigma} explain why so many Hopf monoids are closely related to the Hopf monoid of generalized permutahedra, in ways that are compatible with functions that turn out to have valuative properties when they are viewed polyhedrally.

\begin{example} One consequence of Theorem \ref{thm: universality of wSigma} is that there is a natural bijection between generalized polynomial characters of $\mbf{H}$ and Hopf morphisms of the form $\phi: \mbf{H} \to \mathbb I(\mbf{GP}^+)$ given by the map that sends $\phi$ to the polynomial character $\beta \circ \phi$. 

As an example of this bijection, consider the Hopf submonoid $\mbf{GP} \subset\mbf{GP}^+$ consisting of bounded generalized permutahedra, and the map $\one_{-}: \mbf{GP} \to \mathbb I(\mbf{GP}^+)$ that sends a polytope to its indicator function. The corresponding generalized polynomial character is given by $\zeta = \beta \circ \one_{-}$. For any polytope $P \in \mbf{GP}$, the value of $\beta(\one_{P})$ is $(-1)^{|I|}t^p$ where $p$ is the real number such that $P$ lies on the hyperplane $\sum_{i} x_i = p$. 

This shows that the indicator function $P \mapsto \one_P$ corresponds to  the character $\zeta(P) = (-1)^{|I|}t^{\mu(P)}$ where $\mu: 2^I \rightarrow \mathbb{R}$ is the submodular function defining $P$ and $\mu(P) = \mu(I)$.
\end{example}

\section{Valuations from Hopf theory}\label{sec:valuations}

We now apply the results of the previous section to construct new valuations. First, we will need the following general fact about coideals in comonoids.

\begin{prop}\label{prop: Coideal factoring} Let $\mbf{C}$ be a comonoid, $\mbf{g}$ be a coideal, and $S_1 \sqcup \cdots \sqcup S_k = I$ be a set decomposition. If $f_1,\ldots, f_k$ are functions $f_i: \mbf{C}[S_i] \to R$, 
for some ring $R$ with multiplication $m$, define the function $f_1\cdots f_k: \mbf{C}[I] \to R$ by 
$f_1\cdots f_k = m \circ f_1 \otimes f_2 \otimes \cdots \otimes f_k \circ \Delta_{S_1,\cdots, S_k}$. Then
\[ 
\text{ If } f_i(\mbf{g}[S_i]) = 0 \text{ for $i = 1, \ldots, k$, then } 
f_1\cdots f_k(\mbf{g}[I]) = 0.
\]
\end{prop}
\begin{proof}
By the definition of coideal, we have
\begin{align*}
    \Delta_{S_1,\ldots,S_k}(\tb{g}[I]) &\subseteq \tb{g}[S_1] \otimes \tb{H}[S_2] \otimes \cdots \otimes \tb{H}[S_k] \\
 &+ \tb{H}[S_1] \otimes \tb{g}[S_2]  \otimes \cdots \otimes \tb{H}[S_k] \\
 & + \cdots \\ 
 & + \tb{H}[S_1] \otimes \tb{H}[S_2] \otimes \cdots \otimes \tb{g}[S_k] \\
\end{align*}
Since $f_i(\mbf{g}[S_i]) =  0 $ for each $i$, we have that $f_1 \otimes \cdots \otimes f_k \circ \Delta_{S_1,\ldots, S_k}(\mbf{g}[I]) =  0.$
\end{proof}

As a corollary, we obtain a proof of one of our main theorems, Theorem \ref{mainthm:vals}, which states that for a Hopf submonoid $\tb{H}$ of $\tb{GP}^+$,

\begin{center}
if $f_i: \tb{H}[S_i] \to R$ are strong valuations for $1 \leq i \leq k$, \\ then 
$f_1 \cdot \cdots \cdot f_k: \tb{H}[S_1 \sqcup \cdots \sqcup S_k] \to R$ is a strong valuation.
\end{center}

%

\begin{proof}[Proof of Theorem \ref{mainthm:vals}]
Let $\one_{-}: \mbf{H}\to \val({\mbf{H}})$ be the map that sends a polytope to its indicator function. Then by Theorem \ref{mainthm1}, we have that $\operatorname{Ker}(\one_{-})$ is a coideal of $\mbf{H}$ and $\val(\mbf{H}) \cong \mbf{H}/\operatorname{Ker}(\one_{-}).$ Since a function $f: \mbf{H}[I] \to A$ is a strong valuation if and only if $f(\operatorname{Ker}(\one_{-})) = 0$, the result follows from Proposition \ref{prop: Coideal factoring}.
\end{proof}

We now turn to two general applications of Theorem \ref{mainthm:vals}. The first is to the convolutions of linear species morphisms.

In what follows, if we have a collection of linear maps $g[I]: \mbf{F}[I] \to V$ to the same vector space $V$, we will often identify this with the species map $g$ from $\mbf{F}$ to the species $\mbf{V}[I] = V$ with trivial maps $\mbf{V}[f] = id$ for all $f: I \to J$.

\begin{definition} \label{def:convolution}
Let $f_1, \ldots, f_k$ be species maps $f_i: \mbf{GP}^+ \to A$ for some algebra $A$ with multiplication $m$. Their \textbf{convolution} $f_1 \star \cdots \star f_k:  \mbf{GP}^+ \to A$ is the species map given by
 \[ f_1 \star \cdots \star f_k[I](x) = \sum_{S_1 \sqcup \cdots \sqcup S_k = I} m \circ f_1[S_1] \otimes \cdots \otimes f_k[S_k] \circ \Delta_{S_1,\ldots,S_k}(x). \]
\end{definition}

Applying Theorem \ref{mainthm:vals}, we obtain the following corollary.

\begin{corollary}\label{cor: convolution valuations} Let $\mbf{H}$ be a submonoid of $\mbf{GP}^+$. Let $f_1,\ldots,f_k$ be species maps from $\mbf{H}$ to an algebra $A$. If each $f_i[I]$ is a strong valuation, then $f_1 \star \cdots \star f_k$ is a strong valuation.
\end{corollary}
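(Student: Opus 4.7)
The plan is to derive this as an essentially immediate consequence of Theorem \ref{mainthm2}. By definition,
\[
(f_1 \star \cdots \star f_k)[I](x) = \sum_{S_1 \sqcup \cdots \sqcup S_k = I} m_R \circ (f_1[S_1] \otimes \cdots \otimes f_k[S_k]) \circ \Delta_{S_1,\ldots,S_k}(x),
\]
so the convolution is presented as a finite sum of maps, each of which has exactly the shape handled by Theorem \ref{mainthm2}. The plan is therefore: (i) verify that the hypotheses of Theorem \ref{mainthm2} apply to each summand; (ii) invoke Theorem \ref{mainthm2} to conclude each summand is a strong valuation; (iii) close under addition.

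For step (i), the hypothesis of Theorem \ref{mainthm2} requires that $\val(\mbf{H})$ is a comonoid quotient of $\mbf{H}$. This is supplied by Theorem \ref{mainthm1}(4), which says that for any Hopf submonoid $\mbf{H} \subseteq \mbf{GP^+}$, $\val(\mbf{H})$ is (in particular) a comonoid quotient of $\mbf{H}$. The maps $f_i$ are strong valuations by hypothesis. One minor point to address is that the sum defining convolution ranges over set decompositions in the sense of Definition \ref{def:setcomp} that may contain empty blocks, whereas Theorem \ref{mainthm2} is stated for ordered set partitions. This is harmless: when some $S_i = \emptyset$, the space $\mbf{H}[\emptyset]$ is one-dimensional and any linear map out of it is trivially a strong valuation, so one may drop empty blocks and apply Theorem \ref{mainthm2} to the resulting ordered set partition without changing the value of the summand.

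For step (ii), each summand is of the form $m_R \circ (f_1[S_1] \otimes \cdots \otimes f_k[S_k]) \circ \Delta_{S_1, \ldots, S_k}$, which is exactly the map that Theorem \ref{mainthm2} certifies as a valuation into $R$ (viewed as an abelian group under addition).

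For step (iii), note that strong valuations on $\mbf{H}[I]$ form a vector space: they are precisely the linear maps on $\mbf{H}[I]$ that factor through the quotient $\val(\mbf{H})[I] = \mbf{H}[I]/\ker(\one_{(-)})$, and a sum of maps factoring through a common quotient also factors through that quotient. Hence the finite sum defining $(f_1 \star \cdots \star f_k)[I]$ is again a strong valuation, which is the conclusion.

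There is no real obstacle here; the content of the corollary is already packaged in Theorem \ref{mainthm2}, and the only care required is in handling empty parts in the decomposition and in recording that strong valuations are closed under finite sums.
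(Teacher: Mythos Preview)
Your proof is correct and follows essentially the same approach as the paper, which simply states the corollary as an immediate application of Theorem \ref{mainthm2}. Your treatment is more detailed—explicitly invoking Theorem \ref{mainthm1}(4) for the comonoid quotient hypothesis and handling the empty-block discrepancy between set decompositions and ordered set partitions—but these are elaborations of the same underlying argument rather than a different route.
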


Another application regards the character theory of Hopf monoids. This theory shows that multiplicative functions on a Hopf monoid give rise to polynomial invariants, quasisymmetric functions, and ordered set partitions associated to the elements in the Hopf monoid \cite{AA17}\cite{ABS06}\cite{AM10}. This construction has been used often to describe complicated combinatorial invariants in terms of simpler functions as well as to study combinatorial reciprocity; see \cite{AA17} or \cite{RG14} for examples.

\begin{definition} \label{def:character} Let $\mbf{H}$ be a connected Hopf monoid and $\mathbb{F}$ be a field. A \textbf{character} of $\mbf{H}$ with values in a field $\mathbb{F}$ is a collection of maps $\zeta_I:\mbf{H}[I] \to \mathbb{F}$ for each finite set $I$ satisfying the following properties.
\hspace{-1cm}
\begin{enumerate}
    \item (Naturality) For any bijection $\sigma: I \to J$, we have $\zeta_I(x) = \zeta_J(\mbf{H}[\sigma]\cdot x)$.
    \item (Multiplicativity) For any decomposition $S_1 \sqcup \cdots \sqcup S_k=I$, \, $\zeta_{S_1}(x_1) \cdots \zeta_{S_k}(x_k) = \zeta_I(x_1 \cdots x_k)$.
    \item (Unitality) The map $\zeta_{\emptyset}$ maps the unit of $\mbf{H}[\emptyset]$ to the unit of $\mathbb{F}$.
\end{enumerate}
\end{definition}

\begin{definition} \label{def:charinvariants}
Let $\zeta$ be a character of a connected Hopf monoid $\mbf{H}$. 
\begin{enumerate}
    \item The \textbf{polynomial invariant associated to} $\zeta$ is the function that maps $h \in \mbf{H}[I]$ to the unique polynomial $f_{\zeta}(h)(t)$ such that for any positive integer $k$
     \[  f_{\zeta}(h)(k) = \zeta^{*k}(h).\]
    \item The \textbf{quasisymmetric function associated to} $\zeta$ is the function that maps $h$ to the quasisymmetric function $\Phi_{\zeta}(h)$ given by
     \[ \Phi_{\zeta}(h) = \sum_{S_1 \sqcup \cdots \sqcup S_k = I} m \circ \zeta_{S_1} \otimes \cdots \otimes \zeta_{S_k} \circ \Delta_{S_1,\ldots, S_k}(h) M_{|S_1|, |S_2|, \ldots, |S_k|}.\]
     \item The \textbf{ordered set partition invariant associated to} $\zeta$ is the function that maps $h$ to the linear combination of ordered set partitions $O_{\zeta}(h)$ given by
     \[ O_{\zeta}(h) = \sum_{S_1 \sqcup \cdots \sqcup S_k = I} m \circ \zeta_{S_1} \otimes \cdots \otimes \zeta_{S_k} \circ \Delta_{S_1,\ldots, S_k}(h) [S_1 \sqcup S_2 \sqcup \cdots \sqcup S_k].\]
\end{enumerate}
\end{definition}

Combining Theorem \ref{mainthm:vals} with these invariants associated to a character, we obtain the following.

\begin{corollary}\label{cor: character theory valuation} Let $\mbf{H}$ be a submonoid of $\mbf{GP}^+$. Let $\zeta$ be a character of $\mbf{H}$ such that $\zeta[I]$ is a strong valuation. Then the three maps 
\[
    h \mapsto f_{\zeta}(h), \qquad
    h \mapsto \Phi_{\zeta}(h), \qquad
    h \mapsto O_{\zeta}(h) \qquad  \text{ for } h \in \mbf{H}[I]
\]
are strong valuations.
\end{corollary}

We conclude this section by showing that valuative characters on generalized permutahedra form a group.
Aguiar, Bergeron, and Sottile showed that the characters of a combinatorial Hopf algebra form a group.
Aguiar and Ardila \cite{AA17} extended character theory to Hopf monoids, giving several combinatorial consequences. The key structural result is the following.

\begin{proposition}\cite{ABS06, AA17} The set of characters $\mathbb X(\mbf{H})$ of a (connected) Hopf monoid form a group under convolution. The identity is the character $e$ where $e[I]=0$ for $I \not= \emptyset$ and $e[\emptyset]$ is the isomorphism $\mbf{H}[\emptyset] \cong \F$. 
The inverse of a character $\zeta$ is $\zeta^{-1} = \zeta \circ S$, 
%
where $S$ is the antipode of $\mbf{H}$.
\end{proposition}

\noindent 
We have done all the work needed to show that character theory interacts nicely with valuations.

\begin{proposition}\label{prop:valchar} Let $\mbf{H}$ be a Hopf submonoid of $\mbf{GP}^+$. The  characters of $\mbf{H}$ that are strong valuations $\mathbb X(\mbf{H})^{val}$ form a subgroup of the character group $\mathbb X(\mbf{H})$.
\end{proposition}

\begin{proof} The identity character is trivially a valuation. Corollary \ref{cor: convolution valuations} implies that valuative characters are closed under convolution. A character being a valuation is the same as $\zeta(\mbf{ie}[I]) = \langle 0 \rangle$. By Theorem \ref{mainthm1}, we know that $\mbf{ie}$ is an ideal and a coideal. This implies that
 \[ S(\mbf{ie}[I]) \subseteq \mbf{ie}[I].\]
Therefore, for any valuative character $\zeta$ we have
$\zeta^{-1}(\mbf{ie}[I]) = \zeta \circ S( \mbf{ie}[I]) \subseteq \zeta(\mbf{ie}[I]) = \langle 0 \rangle$.
This shows that $\mathbb X(\mbf{H})^{val}$ is closed under taking inverses.
\end{proof}


\section{Valuations on generalized permutahedra}\label{sec:valGP}

We now use this Hopf theoretic framework to give simple, unified proofs for some new and some known valuations on generalized permutahedra. We recall that for the class of generalized permutahedra, weak valuations and strong valuations coincide \cite{DF10}.

\subsection{Chow classes in the permutahedral variety}

The braid fan $\Sigma_I$ has an associated toric variety, called the \textbf{permutahedral variety} $X_I$. The Chow ring of $X_I$ was described by McMullen \cite{McM93} and Fulton and Sturmfels \cite{FS94}\footnote{using slightly different conventions} in terms of \textbf{Minkowski weights}: these are the functions assigning a weight $w_\sigma$ to each $k$-dimensional face of $\Sigma_I$, subject to a certain balancing condition. In the braid fan we have a face $\sigma_{S_1|\cdots|S_k}$ for each ordered set partition $S_1|\cdots|S_k$ of $I$.

Fulton and Sturmfels constructed a linear\footnote{The vector space of indicator functions of generalized permutahedra forms an algebra with product given by Minkowski sums of polytopes. With this structure, their map becomes an algebra isomorphism.} isomorphism $\theta$ between the space of indicator functions of rational generalized permutahedra in $\R^I$ up to translation and the Chow ring $A^\cdot(X_{\Sigma_I}) \otimes \Q$ of the permutahedral variety $X_{\Sigma_I}$ 
\cite{FS94}. If $D$ is a line bundle of $X_I$ where $\mathcal{O}(D)$ is generated by its sections with corresponding generalized permutahedron $P_D$, then $\theta(P_D)$ is the exponential $\operatorname{exp}(D)$. Explicitly, the element (viewed as a Minkowski weight) $\theta(P)$ is given by
\[
\theta(P)_{S_1 | \cdots | S_k} = v_{S_1 | \cdots |S_k} \mathrm{NVol}(P_{S_1|\cdots|S_k}),
\]
where $P_{S_1|\cdots|S_k}$ is the face of $P$ maximized by any direction $w \in \sigma_{S_1|\cdots|S_k}$, 
$\mathrm{NVol}(P_{S_1|\cdots|S_k})$ is its $(|I|-k)$-dimensional volume in the suitable translate of the subspace given by $\sum_{s \in S_i} x_s= 0$ for all $i$, and $v_{S_1 | \cdots |S_k}$ is an explicit constant not depending on $P$.

Their proof uses the fact, due to McMullen \cite{McM93}, that $\theta$ is valuative. We now give a simple Hopf theoretic proof of this fact.

\begin{proposition}
The map $P \mapsto \theta(P)$ is a valuation.
\end{proposition}

\begin{proof}

Aguiar and Ardila \cite{AA17} showed that the iterated coproduct for the Hopf monoid $\mathbf{GP}$ is
\[
\Delta_{S_1,\ldots, S_k}(P) =P[F_0, F_1] \otimes \cdots \otimes P[F_{k-1},F_k]
\]
where $F_i = S_1 \sqcup \cdots \sqcup S_i$, and $P[F_0, F_1], \ldots, P[F_{k-1}, F_k]$ are the polytopes in $\R^{S_1}, \ldots, \R^{S_k}$ whose product is $P_{S_1|\cdots|S_k}$. Therefore
\begin{align*}
\theta(P)_{S_1 | \cdots | S_k} &= v_{S_1 | \cdots |S_k} \mathrm{NVol}(P_{S_1|\cdots|S_k}) \\
&= v_{S_1 | \cdots |S_k} m_{\R} \circ \mathrm{NVol}[S_1] \otimes \cdots \otimes \mathrm{NVol}[S_k] \circ \Delta_{S_1,\ldots,S_k}(P), 
\end{align*}
which is valuative by Theorem \ref{mainthm:vals}, since normalized volume is valuative and constant scalings of valuative functions are valuative.
\end{proof}


\subsection{The universal Tutte character of generalized permutahedra}

%

One of the most important invariants of a matroid is the \textbf{Tutte polynomial} defined by Tutte \cite{Tutte67} and Crapo \cite{Crapo69}. It is the universal polynomial satisfying a deletion-contraction recurrence. We will define and study it and many related invariants in Section \ref{sec:valM}. 

In this section we focus on a generalization of the Tutte polynomial for generalized permutahedra, due to Dupont, Fink, and Moci \cite{DFM17} 
\footnote{Their construction applies to a class of objects called \emph{minor systems}, which includes comonoids; we have adapted their definitions to $\mbf{GP}$.} We will restrict our attention to the species $\mbf{GP}_{\N}$ of generalized permutahedra whose submodular functions take values in $\N$. This can be adapted to all generalized permutahedra by using generalized polynomials, as was done in the universality results of Section \ref{sec:universal}.

\begin{definition} \cite{DFM17}   
\begin{enumerate}
\item
 A \textbf{Tutte-Grothendieck invariant}\footnote{This definition is equivalent to the universality property described in Proposition 3.20 of \cite{DFM17}. To see the equivalence, note that every norm to a ring $R$ factors uniquely through the universal norm of $\mbf{GP}_\N$. Thus, a norm is equivalent to a map from the Grothendieck monoid $U(\mbf{GP}_\N)$ of $\mbf{GP}_\N$ into the ring $R$. By Proposition 8.2 of \cite{DFM17}, the monoid $U(\mbf{GP}_\N)$ embeds into $\F[x,y,y^{-1}]$ and so every norm is induced by a ring morphism $f: \F[x,y,y^{-1}] \to \R$.} on generalized permutahedra is a linear species morphism $\Phi: \mbf{GP}_\N \to R$ to a ring $R$ such that $\Phi[\emptyset](1) = 1_R$ and  there exist two ring morphisms $f_1, f_2: \F[x,y,y^{-1}] \to R$ such that 
  \[\Phi(P) = f_1\left(xy^{\mu(P|_i)}\right) \cdot \Phi(P/_i) + f_2\left(xy^{\mu(P/_{(I-i)})}\right)\Phi(P|_{I-i}). \]
for all $i \in I$. 
\item
The \textbf{universal Tutte character} $\mathcal{T}[I]: \mbf{GP}_\N[I] \to \F[x_1,y_1,y_1^{-1},x_2,y_2,y_2^{-1}]$ is given by
\[
\mathcal{T}[I](P)  = x_2^{|I|}y_2^{\mu_P(I)} \sum_{A \subseteq I} \left ( \frac{x_1}{x_2} \right )^{|A|} \left ( \frac{y_1}{y_2} \right )^{\mu_P(A)}.
\]
for a generalized permutahedron $P$ in $\R^I$, where $\mu_P$ is the submodular function associated to $P$.
\end{enumerate}
\end{definition}

Any Tutte-Grothendieck invariant is an evaluation of the universal Tutte character:

\begin{theorem}\cite{DFM17} If $\Phi:\mbf{GP}_\N \to R$ is a Tutte-Grothendieck invariant of $\mbf{GP}_\N$, there is a map 
    \[\alpha: \F[x_1,y_1,y_1^{-1},x_2,y_2,y_2^{-1}] \to R\]
 such that $\Phi(P) = \alpha( \mathcal{T}[I](P))$ for all $P \in \mbf{GP}_\N[I]$.
\end{theorem}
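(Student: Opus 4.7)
The plan is to prove this universality statement by induction on $|I|$, using that both $\Phi$ and the universal Tutte character $\mathcal{T}_{\mbf{GP}}$ satisfy the same shape of deletion-contraction recursion. First I would define the ring homomorphism $\alpha: \R[x_1,y_1,y_1^{-1},x_2,y_2,y_2^{-1}] \to R$ on generators by $\alpha(x_j^a y_j^b) = f_j(x^a y^b)$ for $j \in \{1,2\}$, using the two ring morphisms $f_1, f_2$ provided by the Tutte-Grothendieck recursion for $\Phi$. The goal is then to show $\Phi(P) = \alpha(\mathcal{T}_{\mbf{GP}}(P))$ for every $P \in \mbf{GP}[I]$. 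The base case $|I| = 0$ is immediate from the unitality condition $\Phi[\emptyset](1) = 1_R$ and the fact that $\mathcal{T}_{\mbf{GP}}(\emptyset)$ is the unit of the convolution.

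For the inductive step, I would first establish that $\mathcal{T}_{\mbf{GP}}$ itself satisfies a universal version of the Tutte-Grothendieck recursion. Starting from the explicit formula
\[
\mathcal{T}_{\mbf{GP}}(P) = \sum_{A \subseteq I} N_1(P|_A)\,N_2(P/_A),
\]
pick any $i \in I$ and split the sum according to whether $i \in A$ or not. For $A$ with $i \notin A$, coassociativity of $\Delta$ and multiplicativity of the universal norm let me factor $N_1(P|_A) = N_1(P|_{I \setminus i}) / N_1(P|_{I \setminus i \setminus A})$ style identities so that the partial sum reassembles as $x_1^{|I|-1} y_1^{\mu(P|_{I-i})}\,\mathcal{T}_{\mbf{GP}}(P/i)$. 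A symmetric argument for the terms with $i \in A$ yields the complementary summand $x_2 y_2^{\mu(P/(I-i))}\,\mathcal{T}_{\mbf{GP}}(P|_i)$. This gives the universal recursion
\[
\mathcal{T}_{\mbf{GP}}(P) \;=\; x_1^{|I-i|} y_1^{\mu(P|_{I-i})}\, \mathcal{T}_{\mbf{GP}}(P/i) \;+\; x_2 y_2^{\mu(P/(I-i))}\, \mathcal{T}_{\mbf{GP}}(P|_i).
\]

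Now applying $\alpha$ to both sides and using the definitions $\alpha(x_j^a y_j^b) = f_j(x^a y^b)$, I recover exactly the Tutte-Grothendieck recursion satisfied by $\Phi$. By the inductive hypothesis applied to the two smaller polytopes $P/i$ (which lives on $I \setminus \{i\}$) and $P|_i$ (which lives on the singleton $\{i\}$, hence is even smaller or handled as a base case), both right-hand sides agree, so $\Phi(P) = \alpha(\mathcal{T}_{\mbf{GP}}(P))$. Finally one checks that this definition is independent of the choice of $i$, which follows on the $\Phi$ side from the assumption that $\Phi$ is a well-defined Tutte-Grothendieck invariant, and on the $\mathcal{T}_{\mbf{GP}}$ side from the manifestly symmetric expression $\sum_{A \subseteq I} N_1(P|_A) N_2(P/_A)$.

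The main obstacle will be the clean verification of the universal recursion for $\mathcal{T}_{\mbf{GP}}$. The formula $\mathcal{T}_{\mbf{GP}}(P) = \sum_{A} N_1(P|_A)N_2(P/_A)$ is symmetric in the choice of $i$, but extracting the two specific coefficients $x_1^{|I-i|} y_1^{\mu(P|_{I-i})}$ and $x_2 y_2^{\mu(P/(I-i))}$ requires handling the submodular functions on restrictions and contractions carefully: one must check that $\mu(P|_A) - \mu((P/i)|_{A})$ and similar differences assemble to give precisely the exponents of $y_1$ and $y_2$ predicted by the formula. This is a bookkeeping task, but it is the technical heart of the proof; everything else is formal induction combined with the ring-homomorphism property of $\alpha$.
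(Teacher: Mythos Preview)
The paper does not give its own proof of this statement; it is quoted as a result of Dupont, Fink, and Moci \cite{DFM17} and used as a black box. So there is nothing to compare against, and your proposal is supplying an argument the paper omits.

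Your overall strategy is the standard and correct one: define $\alpha$ from $f_1,f_2$, verify that $\mathcal{T}_{\mbf{GP}}$ satisfies the universal deletion--contraction recursion, and induct on $|I|$. The induction and the definition of $\alpha$ are fine. However, your execution of the key step --- splitting $\sum_{A\subseteq I} N_1(P|_A)N_2(P/_A)$ according to whether $i\in A$ --- has the two cases reversed and the prefactors misidentified. Concretely, using the explicit formula and the identities $\mu_{P|_{I-i}}(A)=\mu(A)$ and $\mu_{P/\{i\}}(A')=\mu(A'\cup\{i\})-\mu(\{i\})$, one finds
\[
\sum_{i\notin A} N_1(P|_A)N_2(P/_A)=x_2\,y_2^{\,\mu(I)-\mu(I-i)}\,\mathcal{T}_{\mbf{GP}}(P|_{I-i}),
\qquad
\sum_{i\in A} N_1(P|_A)N_2(P/_A)=x_1\,y_1^{\,\mu(\{i\})}\,\mathcal{T}_{\mbf{GP}}(P/_{\{i\}}),
\]
so the case $i\notin A$ produces the $N_2$-prefactor attached to the \emph{restriction}, not the $N_1$-prefactor attached to the contraction as you wrote. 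Your line ``$N_1(P|_A)=N_1(P|_{I\setminus i})/N_1(P|_{I\setminus i\setminus A})$'' is also not a correct identity in this ring and should be replaced by the direct submodular bookkeeping above. Finally, note that the paper's notation ``$P|i$'' in the recursion is ambiguous; to make the induction go through you need both minors on ground sets of size $|I|-1$, so you should read the recursion as involving $P|_{I-i}$ and $P/_{\{i\}}$ and check that the resulting prefactors match $f_1,f_2$ applied to the universal norms of the complementary one-element minors. Once these swaps and identifications are made carefully, your inductive argument goes through.
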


Our main result of this section is that the universal Tutte polynomial is a valuation.

\begin{proposition}\label{thm: Tutte character is a valuation} The universal Tutte character $\mathcal{T}$ is a valuation on $\mbf{GP}_\N$. In particular, every Tutte-Grothendieck invariant of $\mbf{GP}_\N$ is a valuation.
\end{proposition}

\begin{proof} 
Let  $N_1[I], N_2[I]: \mbf{GP}_\N[I] \to \F[x_1,y_1,y_1^{-1},x_2,y_2,y_2^{-1}]$ be the characters given by 
\[
	N_1[I](P) = x_1^{|I|}y_1^{\mu_P(I)}, \qquad 
	N_2[I](P) = x_2^{|I|}y_2^{\mu_P(I)},
\]
for a generalized permutahedron $P$ in $\R^I$, where $\mu$ is the submodular function of $P$ and $\mu(P) = \mu(I)$. By construction, $\mathcal{T}$ is the convolution of the two morphisms $N_1 \star N_2$.
By Corollary \ref{cor: convolution valuations} it suffices to show that the maps $N_1$ and $N_2$ are valuations.

Any subdivision $\mathcal{P}$ of $P \subseteq \R^I$  is contained in the hyperplane where $\sum x_i = \mu(P)$, so every polytope $P_i \in \mathcal{P}$ must also be contained in that hyperplane. Thus $N_1$ and $N_2$ are constant on any subdivision, so they are weak valuations by Proposition \ref{prop: Euler characteristic}, and strong valuations  by Theorem \ref{thm: GP strong weak equiv}.
The result follows.
\end{proof}

\noindent
Using the tools of \cite{DFM17}, this theorem can be generalized to any linearized subcomonoid of $\mbf{GP}_\N$ and using generalized polynomial rings this can further be generalized to $\mbf{GP}$.

\subsection{The Tutte polynomial of a matroid and of a matroid morphism}

As an application, we now give a proof that the Tutte polynomial for matroids, matroid morphisms, and flag matroids are all valuations.
From the point of view of geometry, (representable) matroids are naturally connected to the Grassmannian. Extending this relationship to the various partial flag varieties gives rise to the notion of flag matroids. For a thorough discussion see \cite{BGW03}.

A \textbf{matroid morphism} is a pair of matroids $M \rightarrow M'$ on the same ground set such that every flat of $M'$ is a flat of $M$; these two matroids are called \textbf{concordant}. More generally, a \textbf{flag matroid} $\mathcal{M}$ consists of $k$ matroids $M_1, \ldots, M_k$ of different ranks such that every pair is concordant. The \textbf{flag matroid polytope} of $\mathcal{M}$ is the Minkowski sum of the corresponding matroid polytopes:
\[
P(\mathcal{M}) = P(M_1) + \cdots + P(M_k) = \{a_1 + \cdots + a_k \, : \, a_i \in P(M_i) \text{ for } i=1, \ldots, k\}.
\]
The flag matroid polytope $P(\mathcal{M})$ is itself a generalized permutahedron. \cite{BGW03}.

Las Vergnas \cite{Ver80} introduced the \textbf{Tutte polynomial of a matroid morphism} $M \rightarrow M'$:
\[
T_{M \rightarrow M'}(x,y,z) = \sum_{A \subseteq E} (x-1)^{r(M')-r_{M'}(A)} (y-1)^{|A| - r_M(A)} z^{(r(M)-r_M(A)) - (r(M') - r_{M'}(A))}
\]
and showed that it specializes to many quantities of interest. See also \cite{Ardila07semimatroids}. The Tutte polynomial of a matroid $M$ is obtained by setting $M=M'$.

\begin{proposition}\label{prop:Tutte}
The Tutte polynomial of a matroid, the Las Vergnas Tutte polynomial of a matroid morphism, and the universal Tutte character of a flag matroid are valuations.
\end{proposition} 

\begin{proof} 
This follows from 
Proposition \ref{thm: Tutte character is a valuation} and
the fact, shown in \cite{DFM17}, that the Tutte polynomial and the Las Vergnas Tutte polynomial are reparameterizations of the universal Tutte character for matroids and matroid morphisms.
\end{proof}

Using the relationship between flag matroids and the flag variety Dinu, Eur, and  Seynnaeve defined a \textbf{K-theoretic Tutte polynomial} for flag matroids \cite{DES20}. They showed that their polynomial is also valuative, but it is not a Tutte-Grothendieck invariant. It would be interesting to explain its relationship with the Hopf algebraic framework of this paper.

\section{Valuations on matroids}\label{sec:valM}

The subdivisions of a matroid polytope into smaller matroid polytopes arise naturally in various algebro-geometric contexts, for example, the compactification of the moduli space of hyperplane arrangements due to Hacking, Keel, and Tevelev \cite{HKT} and Kapranov \cite{Kap}, the compactification of fine Schubert cells in the Grassmannian due to Lafforgue \cite{Lafforgue1,Lafforgue2}, the K-theory of the Grassmannian \cite{SpeyerKtheory}, the stratification of the tropical Grassmannian \cite{SS04} and the study of tropical linear spaces by Ardila and Klivans \cite{ArdilaKlivans} and Speyer  \cite{Speyer}. 

The study of valuations on matroids was initiated by Speyer in \cite{Speyer, SpeyerKtheory} in order to understand the constraints on matroid subdivisions. He  discovered several valuations on matroids -- some coming from the K-theory of the Grassmannian -- and used them to prove bounds on the $f$-vector of a tropical linear space. With this paper as motivation, many authors have constructed other valuations of matroids. We now show that many of these valuations arise easily from our construction. We note that for the class of matroids, weak valuations and strong valuations coincide \cite{DF10}.

Two key matroid invariants are the \textbf{Tutte polynomial} and the \textbf{characteristic polynomial}:
%
\begin{eqnarray*}
T_M(x,y) &=& \sum_{A \subseteq E} (x-1)^{r(M)-r_{M}(A)} (y-1)^{|A| - r_M(A)}, \\ \chi_M(t) &=& \sum_{\substack{F \subseteq M \\ \text{$F$ flat}}} \mu(\emptyset, F) t^{rk(M) - rk(F)} = (-1)^r T_M(1-t,0)
\end{eqnarray*}
where $\mu$ is the M\"obius function of the lattice of flats of $M$. We saw in Proposition \ref{prop:Tutte} that $T_M$ is a valuation. Since all matroids involved in a matroid subdivision lie on the same hyperplane $\sum_i x_i = r$, they must have the same rank, and hence the above expression shows that $\chi_M(t)$ is a valuation as well.

\subsection{The Chern-Schwartz-MacPherson cycles of a matroid}

The deep connection between matroids and tropical geometry, which stem from the fact that the Bergman fan of a matroid is a tropical fan \cite{ArdilaKlivans},  leads to many old and new invariants of matroids coming from geometry. A very interesting example is the Chern-Schwartz-MacPherson (CSM) cycle of a matroid defined by L\'opez de Medrano, Rinc\'on, and Shaw \cite{LRS}.\footnote{The CSM cycle of a matroid was originally defined as a tropical cycle; we describe it as a Minkowski weight.} 

The \textbf{beta invariant} $\beta(M)$ of a matroid is the coefficient of $x^1y^0$ in the Tutte polynomial of $M$. The beta invariant of a flag $\mathcal{F} = \{F_1 \subset \cdots \subset F_k\}$ is $\beta(M[\mathcal{F}]) =  \prod_{i=0}^{k} \beta(M[F_i, F_{i+1}])$ where  $M[A,B] = (M|_B)/_A$ for $A \subseteq B$; this is non-zero if and only if $\mathcal{F}$ is a flag of flats.

\begin{definition}
Let $M$ be matroid of rank $r$ on ground set $I$. 
For $0 \leq k \leq r-1$, the $k$-th \textbf{Chern-Schwartz-MacPherson cycle} $\CSM_k(M)$ is the $k$-dimensional Minkowski weight on the braid fan $\Sigma_I$ given by 
\[
\CSM_k(M)(S_1 | \cdots | S_k) = (-1)^{r-k}  \beta(M[\mathcal{F}]) 
\]
for each ordered set partition $S_1|\cdots|S_k$ of $I$, where $F_i = S_1 \sqcup \cdots \sqcup S_i$ for $1 \leq i \leq k$.
\end{definition}

To interpret this as a Minkowski weight on the braid fan $\Sigma_I$, we note that the faces of this fan are in natural bijection with the ordered set partitions of $I$.\footnote{The fact that the function above is indeed a Minkowski weight on this fan is a non-trivial result in \cite{LRS}.} 
We obtain a much simpler proof of a theorem of L\'opez de Medrano, Rinc\'on, and Shaw.

\begin{theorem}\cite{LRS} 
For any fixed $k$, the $k$th Chern-Schwartz-MacPherson class $\CSM_k(M)$ is a valuation of matroids.
\end{theorem}

\begin{proof} Since the Tutte polynomial is a valuation by Proposition \ref{prop:Tutte}, the $\beta$ invariant is also a valuation; this was first observed by Speyer \cite{Speyer}. 
Theorem \ref{mainthm:vals} then implies that  the function
 \[
 m \circ \beta^{\otimes k} \circ \Delta_{S_1,\ldots,S_k}(M) = \beta(M[\mathcal{F}])
 \]
is also a valuation for any set partition $S_1 \sqcup \cdots \sqcup S_k$.
Since a matroid polytope $P(M)$ lies on the hyperplane $\sum_i x_i = r(M)$, all the matroids in a matroid subdivision must have the same rank. It follows that $\CSM_k(M) = (-1)^{r(M)-k} m \circ \beta^{\otimes k} \circ \Delta_{S_1,\ldots,S_k}(M)$ is a valuation as well.
\end{proof}

\subsection{The volume polynomial of a matroid}

One of the most recent celebrated results in matroid theory is the construction of the combinatorial Chow ring of a matroid by Adiprasito, Huh, and Katz \cite{AHK15}. In the case when $M$ is a realizable matroid, this ring agrees with the Chow cohomology ring of the wonderful compactification of the hyperplane arrangement associated to $M$. 
For each loopless matroid, Eur constructed a multivariate polynomial which encodes all the information of its combinatorial Chow ring \cite{Eur20}. 


%

The \textbf{characteristic polynomial} of a loopless matroid $M$ is given by \[ \chi_M(t) = \sum_{\substack{F \subseteq M \\ \text{$F$ flat}}} \mu(\emptyset, F) t^{rk(M) - rk(F)} = (-1)^r T_M(1-t,0)\]
where $\mu$ is the M\"obius function of the lattice of flats of $M$ and $T_M$ is the Tutte polynomial.
If $M$ has a loop, we set $\chi_M(t) = 0$. This is a multiple of $t-1$, and the \textbf{reduced characteristic polynomial} is $\overline{\chi}_M(t) = \frac{\chi_M(t)}{t-1}.$
Let $\mu^i(M)$ denote the unsigned coefficient of $t^i$ in the reduced characteristic polynomial of $M$.

%
%


\begin{definition}\cite{Eur20} Let $I$ be a finite set and $\R[t_F]$ be the polynomial ring on variables $t_F$ for $F \subset I$. The \textbf{volume polynomial}\footnote{The motivation for this definition is algebro-geometric; this is a non-trivially equivalent formulation.} of a matroid $M$ is 
 \[ 
 VP_M(\mbf{t}) = \sum_{\substack{\emptyset = F_0 \subset F_1 \subset \cdots \subset F_k \subset F_{k+1} \\d_1 + \cdots + d_k = d }} (-1)^{d-k}\binom{d}{d_1, \cdots, d_k} \prod_{i} \binom{d_i - 1}{\hat{d_i} - r(F_i)} \mu^{\hat{d_i} - r(F_i)}(M[F_i,F_{i+1}]) t_{F_1} \cdots t_{F_k},
 \]
 where the sum over flags of flats of $M$ and over sets of positive integers $d_i$ with $\sum d_i = d$, and we denote $\hat{d_j} = \sum_{i=1}^j d_i$.
\end{definition}

%


\begin{theorem}\cite{Eur20} 
The volume polynomial $VP_M(\mbf{t})$ is a valuation of matroids.
\end{theorem}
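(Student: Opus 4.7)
The plan is to realize each coefficient of $VP_M(\mbf{t})$ as a convolution of valuations on the pieces of an iterated coproduct of $M$, and then invoke Theorem \ref{mainthm2}. The key ingredient is Theorem \ref{thm: characteristic polynomial is a valuation}, which shows that $M \mapsto \hat{\chi}_M(t)$ is a valuation on matroids; hence for every finite set $J$ and every $j\geq 0$, the map $\mu^j:\mbf{M}[J]\to\R$ extracting the unsigned $j$-th coefficient of the reduced characteristic polynomial is a strong valuation.

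The main obstacle is that the superscript of $\mu$ and the binomial $\binom{d_i - 1}{\hat{d}_i - \operatorname{rk}(F_i)}$ in the definition of $c_{F_1,\ldots,F_k}^{d_1,\ldots,d_k}(M)$ both depend on $M$ through the rank $\operatorname{rk}_M(F_i)$, so the product in $c$ is not immediately a piecewise expression of the form required by Theorem \ref{mainthm2}. I would overcome this by stratifying according to the rank sequence of the flag. For non-negative integers $j,r$, define
\[
\nu_{j,r}(N) := \mu^j(N)\cdot\one[\operatorname{rk}(N)=r].
\]
Because a matroid subdivision preserves rank (all rank-$r$ matroid polytopes lie in the hyperplane $\sum_i x_i = r$), the rank indicator $\one[\operatorname{rk}(\cdot) = r]$ is a strong valuation; splitting the subdivision relations for $\mu^j$ by rank then shows that $\nu_{j,r}$ is a strong valuation as well.

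Now fix a flag $\emptyset = F_0 \subset F_1 \subset \cdots \subset F_k \subset F_{k+1} = I$, positive integers $d_1,\ldots,d_k$ with $\sum d_i = d$, and a rank sequence $(r_1,\ldots,r_{k+1})$. Setting $S_i := F_i \setminus F_{i-1}$, Lemma \ref{lemma: iterated coproduct matroids} gives $\Delta_{S_1,\ldots,S_{k+1}}(M) = M[F_0,F_1]\otimes\cdots\otimes M[F_k,F_{k+1}]$. On the stratum where $\operatorname{rk}(M[F_{i-1},F_i]) = r_i$ for every $i$, the cumulative ranks $\operatorname{rk}_M(F_i) = r_1 + \cdots + r_i$ are constants of the stratum, so the exponents $e_i := \hat{d}_i - (r_1 + \cdots + r_i)$ and the binomials $\binom{d_i - 1}{e_i}$ become fixed numbers. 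The contribution of this stratum to $c_{F_1,\ldots,F_k}^{d_1,\ldots,d_k}(M)$ is therefore a constant times
\[
m_\R\circ\bigl(\one[\operatorname{rk}(\cdot)=r_1]\otimes\nu_{e_1,r_2}\otimes\cdots\otimes\nu_{e_k,r_{k+1}}\bigr)\circ\Delta_{S_1,\ldots,S_{k+1}}(M),
\]
which is a valuation by Theorem \ref{mainthm2}. Summing over the finitely many rank sequences recovers $c_{F_1,\ldots,F_k}^{d_1,\ldots,d_k}(M)$ as a valuation, and summing over chains and tuples $(d_1,\ldots,d_k)$ and multiplying by the $M$-independent monomial $t_{F_1}\cdots t_{F_k}$ yields $VP_M(\mbf{t})$ as a strong valuation with values in $\R[t_F]$.
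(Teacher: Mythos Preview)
Your proposal is correct and follows essentially the same route the paper sketches: recognize each coefficient $c_{F_1,\ldots,F_k}^{d_1,\ldots,d_k}$ as arising from the iterated coproduct (Lemma \ref{lemma: iterated coproduct matroids}) and then invoke Theorem \ref{mainthm2} (equivalently, the convolution statement of Lemma \ref{lemma: eur valuation lemma}). The paper does not write out a full argument; it simply cites Eur's proof via Lemma \ref{lemma: eur valuation lemma} and remarks that Theorem \ref{mainthm2} reproves that lemma without appealing to the Derksen--Fink universal invariant.

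Your rank-stratification step is the one place where you do more than the paper. You correctly identify that the exponents $\hat d_i-\operatorname{rk}_M(F_i)$ and the binomials $\binom{d_i-1}{\hat d_i-\operatorname{rk}_M(F_i)}$ depend on $M$ through the ranks of the pieces, so the expression is not literally a single tensor of fixed valuations. Your fix---multiplying by the rank indicators $\one[\operatorname{rk}(\cdot)=r_i]$, which are strong valuations because matroid polytopes of distinct ranks live in disjoint hyperplanes $\sum x_j=r$, and then summing over the finitely many rank sequences---is a clean and correct way to make the exponents and binomials constant in each summand. This is exactly the kind of detail the paper elides when it speaks of ``the clear relationship between $c_{F_1,\ldots,F_k}^{d_1,\ldots,d_k}$ and the iterated coproduct rule.'' The final step of extending the sum from flags of flats of $M$ to all chains of subsets is also fine, since $\mu^j(M[F_i,F_{i+1}])=0$ whenever $F_i$ is not a flat (the minor then has a loop), so $c$ vanishes on such chains.
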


\begin{proof}
Since the Tutte polynomial is a valuation,  the function $\mu^e(M)$ is also a valuation for fixed $e$.
Once again, the matroids involved in a matroid subdivision have a fixed rank, so the term corresponding to a fixed choice of $F_1, \ldots, F_k$ and $d_1, \ldots, d_k$ is a constant multiple of $\prod \mu^{e_i}(M[F_i, F_{i+1}]) = \mu^{e_1}\star \cdots \star \mu^{e_k}(M)$ for fixed $e_1, \ldots, e_k$; this is a valuation by Theorem \ref{mainthm:vals}.
\end{proof}

Eur's proof is similar in spirit, but he relies on the universality of the Derksen-Fink invariant. 


\subsection{The Kazhdan-Lusztig polynomial of a matroid}\label{KL}

\begin{definition}\label{def:KL polynomial}\cite{EPW} The \textbf{Kazhdan-Lusztig polynomial} of a matroid $M$ is the unique polynomial $P_M(t)$ satisfying the following conditions for all matroids:

\begin{enumerate}
	\item If $M$ is the trivial matroid of rank $0$ , then $P_M(t) = 1$.
	\item If $r(M) > 0$, then $\operatorname{deg}(P_M(t)) < \frac{1}{2} r(M)$.
	\item For every matroid $M$ on $I$,
	 \[ t^{r(M)}P_M(t^{-1}) = \sum_{F \subseteq I \textrm{ flat}}\chi_{M|_F}(t)P_{M/_F}(t). \] 
\end{enumerate}
\end{definition}

\begin{definition}\label{def:invKL polynomial}\cite{GX20} The \textbf{inverse Kazhdan-Lusztig polynomial} of a matroid $M$ is the unique polynomial $Q_M(t)$ satisfying the following conditions for all matroids:

\begin{enumerate}
	\item If $M$ is the trivial matroid of rank $0$, then $Q_M(t) = 1$.
	\item If $r(M) > 0$, then $\operatorname{deg}(Q_M(t)) < \frac{1}{2} r(M)$.
	\item For every matroid $M$ on $I$,
	 \[ (-t)^{r(M)}Q_M(t^{-1}) = \sum_{F \subseteq I \textrm{ flat}} (-1)^{r(M|_F)} Q_{M|_F}(t)t^{r(M/_F)} \chi_{M/_F}(t^{-1}). \] 
\end{enumerate}
\end{definition}

\begin{remark} In \cite{EPW} and \cite{GX20}, these polynomials are only defined for loopless matroids. The definitions we have given extend their definitions to the case of all matroids. Note that $P_M = 0$ whenever $M$ is non-trivial and has a loop.
\end{remark}

\begin{theorem} 
The inverse Kazhdan-Lusztig polynomial $Q_M(t)$ is a valuation of matroids.
\end{theorem}

\begin{proof} We proceed by induction on the size of the ground set $I$. When $|I| = 1$, there are two matroids on $I$. Their matroid polytopes are both points. Thus, every function is trivially a valuation on these matroids. Now suppose $Q_M(t)$ is a strong valuation for matroids on ground sets of size less than $k$,  and consider a ground set $I$ with size $|I| = k$.

Define
 \begin{eqnarray*}
 R_M(t) &=& \sum_{\substack{F \not = I \; \textrm{flat}}} (-1)^{r(M|_F)} Q_{M|_F}(t)t^{r(M/_F)} \chi_{M/_F}(t^{-1}). \\
\end{eqnarray*}
If $S \subsetneq I$ is not a flat of $M$, then the contraction $M/_S$ will have a loop, so $\chi_{M/_S}(t)=0$. Thus 

 \begin{eqnarray*}
R_M(t) &=& \sum_{\substack{S \sqcup T = I \\ T \not = \emptyset}} m \circ \left((-1)^{r(-)}Q_{-}(t) \otimes t^{r(-)}\chi_{-}(t^{-1}) \right) \circ \Delta_{S,T}(M).
 \end{eqnarray*}
 For each decomposition $S \sqcup T=I$ with $T \not = \emptyset$, $Q_{-}(t)$ is a valuation for matroids on $S$ by the induction hypothesis, and $\chi_{-}(t)=(-1)^{r(-)}T_{-}(1-t,0)$ is a valuation for matroids on $T$. Since the matroids in a matroidal subdivision have the same rank, the assignments 
 $M \mapsto (-1)^{r(M)}Q_M(t)$ and 
 $M \mapsto t^{r(M)}\chi_M(t^{-1})$ are also valuations on $S$ and $T$ respectively. Theorem \ref{mainthm:vals} then shows that $R_{-}(t)$ is a valuation. In view of Definition \ref{def:invKL polynomial}(3) of $Q_M(t)$, the function
\[ 
(-1)^{r(M)}R_M(t) = t^{r(M)} Q_M(t^{-1}) - Q_M(t)
\]
is also a valuation.

Let $\sum_{i} \pm \one_{M_i} = 0$ be a relation coming from a matroidal subdivision, where all the matroids have rank $r$. Then, we have
 \[\sum_{i}\pm \left (t^r Q_{M_i}(t^{-1}) - Q_{M_i}(t) \right) = 0; \]
that is,
 \[ \sum_{i}\pm t^r Q_{M_i}(t^{-1}) = \sum_{i}\pm Q_{M_i}(t).\]
But each term in the left hand side has degree greater than $r/2$ and each term in the right hand side has degree less than $r/2$ by Definition \ref{def:invKL polynomial}(2), so both sides must equal $0$. Thus 
 $\sum_{i} \pm Q_{M_i} = 0$ and 
 $M \mapsto Q_M$ is a weak valuation. For matroids, weak and strong valuations agree \cite{DF10}, so this is also a strong valuation.
\end{proof}

\begin{theorem} 
The Kazhdan-Lusztig polynomial $P_M(t)$ is a valuation of matroids.
\end{theorem}

\begin{proof} We proceed by induction on the size of the ground set $M$. As with the inverse Kazhdan-Lusztig polynomial, the base case trivially holds. Now suppose that $M \mapsto P_M(t)$ is valuative for all matroids on ground sets of size less than $k$, and consider a ground set $I$ with $|I| = k$.

If a matroid $M$ contains a loop $e$, then its matroid polytope lies on the hyperplane $x_e=0$, and all matroids in a matroid subdivision of $M$ will also contain that loop. Thus the Kazhdan-Lusztig polynomial is valuative on any such subdivision, because it equals 0 on all of the matroids involved. If $M$ is loopless, then Gao and Xie \cite{GX20} show that
 \[
 P_M(t) = -\sum_{\substack{F \neq \emptyset \\ \text{flat}}} (-1)^{\operatorname{r}(M|_F)}  Q_{M|_F}(t) \cdot P_{M/_F}(t).
 \]
Since $P_M$ vanishes whenever $M$ has loops and $M/_S$ will have a loop if $S$ is not a flat, we can rewrite this equation as
\begin{align*}
P_M(t) &= - \sum_{S \neq \emptyset} (-1)^{\operatorname{r}(M|_S)}  Q_{M|_S}(t) \cdot P_{M/_S}(t) \\
	&= - \sum_{S \neq \emptyset} m \circ  \left( (-1)^{\operatorname{r}(-)}Q_{-} \otimes P_{-}(t) \right)  \circ \Delta_{S,T}(M).
\end{align*}
By induction, the functions in the summand are valuative. Theorem \ref{mainthm:vals} then allows us to conclude that $M \mapsto P_M(t)$ is valuative.
%
\end{proof}

\begin{example}\label{ex:u36 subdivision} Consider the following matroid subdivision described in \cite{BJR09}. Let $U_{3,6}$ denote the uniform matroid on ground set $[6]$. Let $M_1$ be the Schubert matroid with maximal element $\{2,4,6\}$. The bases of this matroid are all subsets $1 \leq a < b < c \leq 6$ with $a \leq 2$, $b \leq 4$, $c \leq 6$. Let $\sigma$ be the permutation $345612$. Then, the matroids $M_1$, $\sigma \cdot M_1$, and $\sigma^2 \cdot M_1$, with $\sigma$ acting by relabelling the ground set and bases, are the maximal dimensional matroids of a subdivision of the uniform matroid $U_{3,6}$.

The other matroids in this subdivision are the matroid $M_2$ with bases
 \[\mathcal{B}(M_2) = \{134, 135, 136, 145, 146, 234, 235, 236, 245, 246 \}, \]
two isomorphic matroids given by $\sigma \cdot M_2$ and $\sigma^2 \cdot M_2$ and a final matroid $M_3$ with bases
 \[ \mathcal{B}(M_3) = \{135,136,145,146,235, 236, 245, 246\}.\]
This subdivision gives the inclusion-exclusion relation among indicator functions
 \[ \one_{U_{3,6}} = \one_{M_1} + \one_{\sigma \cdot M_1} + \one_{\sigma^2 \cdot M_1} - \one_{M_2} - \one_{\sigma \cdot M_2} - \one_{\sigma^2 \cdot M_2} + \one_{M_3}. \]
Using the methods of \cite{EPW}, we compute the Kazhdan-Lusztig polynomials
\[
	P_{U_{3,6}} = 9t + 1 
	\qquad
	P_{M_1} = P_{\sigma \cdot M_1} = P_{\sigma^2 \cdot M_1} = 3t + 1
	\qquad
	P_{M_2} = P_{\sigma \cdot M_2} = P_{\sigma^2 \cdot M_2} = 1 
	\qquad
	P_{M_3} = 1,
\]
which indeed satisfy the inclusion-exclusion relation
 $9t +1 = 3(3t + 1) - 3(1) + 1.$
\end{example}

\subsection{The motivic zeta function of a matroid}\label{sec:motivic}

In \cite{JKU19}, Jensen, Kutler, and Usatine constructed three motivic zeta functions for matroids which, in the case of realizable matroids, coincide with the Igusa zeta functions of hyperplane arrangements. The \textbf{local motivic zeta function} of a matroid $M$ on ground set $I$ is
\[
Z_M^0(q,t) = \sum_{w \in \Z_{>0}^E} \chi_{M_w}(q) q^{-r(M)-\operatorname{wt}_M(w)}t^{|W|},
\]
where $M_w$ is the matroid of $w$-maximal bases of $M$, and $\operatorname{wt}_M(w)$ is the weight of those bases. 
The \textbf{motivic zeta function} $Z_M(q,t)$ and \textbf{reduced motivic zeta function} $\overline{Z}_M(q,t)$ are defined similarly, and can be obtained from $Z_M^0(q,t)$ through the following relations.
\[ 
Z_M(q,t) = q^{-1}(q-1) \left (\frac{1}{1 - q^{- r(M)}t^{|I|}} \right ) \overline{Z}_M(q,t), 
\quad 
Z_M^0(q,t) = q^{-1}(q-1) \left (\frac{q^{-r(M)}t^{|I|}}{1 - q^{- r(M)}t^{|I|}} \right ) \overline{Z}_M(q,t)
\]

%

%

\begin{theorem} The motivic zeta functions
$Z_M(q,t), Z_M^0(q,t), \overline{Z}_M(q,t)$ are valuations.
\end{theorem}

\begin{proof} 
To show that a function is valuative, it suffices to show that it is valuative on matroids of a fixed ground set and fixed rank. Thus, from the above relations, to prove that the three motivic zeta functions are valuative, it suffices to prove that $M \mapsto q^{r(M)} Z_M^0(q,t)$ is a strong valuation. 

 We proceed by induction on $|I|$. 
 Again,  every function is trivially a valuation on the two matroids with $|I|=1$. 
 Now suppose this assignment is a strong valuation for matroids on ground sets of size less than $k$, and consider a ground set $I$ with $|I| = k$. We use the following recurrence, proved in \cite[Theorem 1.12]{JKU19}:
\[
q^{r(M)}Z_M^0(q,t) = \frac{(q-1)q^{-r(M)}t^{|I|}}{1 - q^{-r(M)}t^{|I|}} 
 	\left ( \overline{\chi}_M(q) + \sum_{\hat{0} \subsetneq F \subsetneq I  \textrm{ flat}} \overline{\chi}_{M/_F}(q)q^{r(M|_F)}Z_{M|_F}^0(q,t)\right ).
\]
Assume momentarily that $M$ is loopless, so $\hat{0}=\emptyset$. 
If $S \subsetneq I$ is not a flat of $M$, then the contraction $M/_S$ will have a loop, so $\chi_{M/_S}(t)=0$. Thus the equation above is equivalent to 
 \begin{eqnarray*}
 q^{r(M)}Z_M^0(q,t) &=&  \frac{(q-1)q^{-r(M)}t^{|I|}}{1 - q^{-r(M)}t^{|I|}} \left ( \chi_M(q) + \sum_{\emptyset \subsetneq S \subsetneq I }\overline{\chi}_{M/_S}(q)q^{r(M|_S)}Z_{M|_S}^0(q,t)\right ) \\
  &=& \frac{(q-1)q^{-r(M)}t^{|I|}}{1 - q^{-r(M)}t^{|I|}} \left ( \overline{\chi}_M(q) + \sum_{\emptyset \subsetneq S \subsetneq I }m \circ (q^{r(-)}Z_{-}^0(q,t) \otimes \overline{\chi}_{-}(q)) \otimes \Delta_{S,T}(-)\right ).
 \end{eqnarray*}
 If $M$ is non-trivial and has a loop, then both equations above say $0=0$, and the equivalence is still valid.
 Since the matroids in a matroid subdivision have the same rank and ground set, we may ignore the factor $\frac{(q-1)q^{-r(M)}t^{|I|}}{1 - q^{-r(M)}t^{|I|}}$.
 For each decomposition $S \sqcup T=I$ with $T \not = \emptyset$, the map $M \mapsto q^{r(M)}Z_M^0(q,t)$ is a valuation for matroids on $S$ by the induction hypothesis, and $\overline{\chi}_{-}(q)=(-1)^{r(-)}T_{-}(1-q,0)/(q-1)$ is a valuation for matroids on $T$ by Proposition \ref{prop:Tutte}. By Theorem \ref{mainthm:vals}, $ q^{r(M)}Z_M^0(q,t)$ is a valuation on $I$ as desired. 
\end{proof}

\subsection{The Billera-Jia-Reiner quasisymmetric function of a matroid}

For a matroid $M$, a function $f$ from the ground set $I$ of $M$ to the natural numbers $\N$ is $M$-\textbf{generic} if $M$ has a unique basis $B$ that minimizes $f(B) = \sum_{b \in B} f(b)$. 

\begin{definition}
The \textbf{Billera-Jia-Reiner quasisymmetric function} \cite{BJR09} of a matroid $M$ on ground set $I$ is the quasisymmetric function given by
 \[ F(M,\mathbf{x}) = \sum_{\substack{w: I \to \N \\ \text{$M$-generic}}} \prod_{b \in B} x_{w(b)}.
 \]
\end{definition}

Billera, Jia, and Reiner showed this is the quasisymmetric function $\Phi_\zeta$ associated by Definition \ref{def:charinvariants} to the character 
 \[ \zeta(M) = \left \{ \begin{array}{ll}
    1 & \text{if $M$ has a unique basis} \\
    0 & \text{otherwise}
    \end{array} \right.\]
on the Hopf monoid of matroids $\mbf{M}$.

\begin{theorem}\cite{BJR09}
The Billera-Jia-Reiner quasisymmetric function is a valuation on matroids.
\end{theorem}

\begin{proof}
By Corollary \ref{cor: character theory valuation} it suffices to show that the map $\zeta$ is a strong valuation on matroids.
We use the following useful lemma proved by Ardila, Fink, and Rinc\'on \cite{AFR}. For any closed convex set $X \subset \R^I$, the function 
 \[ j_X(M) = \begin{cases}
    1 & \text{if $P(M) \cap X = \emptyset$,} \\
    0 & \text{otherwise.}
    \end{cases}\]
is a valuation for the matroids on $I$.

Let $B_{r,I}$ be the subset of $\{0,1\}^I$ consisting of those vectors with exactly $r$ ones. For each point $b \in B_{r,I}$, consider the valuation $i_b = j_{\text{conv}(B_{r,I} - b)}$. For matroids of rank $r$ on $I$, the function $i_b(M)$ is equal to $1$ if and only if $b$ is the unique basis of $M$. Therefore $\zeta = \sum_{b \in B_{r,I}} i_b$ is a valuation.
\end{proof}

\subsection{The Derksen-Fink invariant of a matroid}

A \textbf{valuative invariant} on matroids is a valuation $f$ on matroids such that $f(M) = f(N)$ whenver $M$ and $N$ are isomorphic. Derksen and Fink constructed a valuative invariant on matroids which is universal among all valuative invariants in the sense that any other valuative invariant is obtained from theirs by a specialization \cite{DF10}. 

\begin{definition} Let $M$ be a matroid on $I$ with $|I| = n$ and let $\ell$ be a linear order of $I$. The \textbf{rank jump} function with respect to $\ell$ is the function $\mathrm{rkjump}_\ell: \mbf{M}[I] \to \{0,1\}^{n}$ given by
 \[ 
 (\mathrm{rkjump}_\ell(M))_i =
 r_M(\{\ell_1, \ldots, \ell_i\}) - r_M(\{\ell_1, \ldots, \ell_{i-1}\}).
 \]
The \textbf{Derksen-Fink invariant} is the function $\mathcal{G}: \mbf{M}[I] \mapsto \R[U_{\alpha} \; \lvert \; \alpha \in \{0,1\}^n]$ given by
 \[ \mathcal{G}(M) = \sum_{\ell} U_{\mathrm{rkjump}_\ell(M)}, \]
 where the sum is over all linear orders $\ell$ on $I$.
\end{definition}

Let us give a short proof that $\mathcal{G}$ is indeed a valuation using Theorem \ref{mainthm:vals}. To do this, we will identify the vector space $\R[U_{\alpha} \; \lvert \; \alpha \in \{0,1\}^n]$ as a subspace of the algebra $\R\langle x, y \rangle$ of noncommutative polynomials in $x$ and $y$ through the map
 \[U_{\alpha} \mapsto z_1 z_2 \cdots z_n, \]
where
 \[z_i = \begin{cases}
 x & \text{if $\alpha_i = 0$} \\
 y & \text{if $\alpha_i = 1$}
 \end{cases} \]
With this, $\mathcal{G}$ extends to a map which we also denote by $\mathcal{G}$ from $ \mbf{M}[I]$ into $\R\langle x , y \rangle$.

\begin{theorem} The Derksen-Fink invariant is a valuation on matroids.
\end{theorem}

\begin{proof} 
For each singleton $\{a\}$ define the map $f: \mbf{M}[\{a\}] \to \R\langle x, y \rangle$ by
 \[ 
 f(M) = \begin{cases}
 x & \text{if  $r(M) = 0$,} \\
 y & \text{if $r(M) = 1$.}
 \end{cases}
 \]
There are only two matroids on $\{a\}$, namely a loop and a coloop, so their matroid polytopes have no matroid subdivisions. Therefore, $f$ is trivially a valuation. 

Now, notice that for each linear order $\ell$, the map $m_{A} \circ f^{\otimes n} \circ \Delta_{\{\ell_1\}, \ldots, \{\ell_n\}}$, which is a valuation by Theorem \ref{mainthm:vals}, sends $M$ to the noncommutative polynomial identified with $U_{\mathrm{rkjump}_\ell(M)}$. Summing over all possible $\ell$ we obtain the desired result.
\end{proof}

%
%
%

\section{Valuations on poset cones}\label{sec:valP}

We now study valuations on poset and preposet cones. Recall from Theorem \ref{thm:CGP} that the map $p \mapsto \operatorname{cone}(p)$ is a bijection between (pre)posets and (not necessarily) pointed conical generalized permutahedra where the origin is in the lineality space. Furthermore, this map induces Hopf monoid isomorphisms
\[
\mbf{PP} \cong \mbf{CGP}_0^+ \qquad \mbf{P} \cong \mbf{PCGP}_0^+
\]
from (pre)posets to (not necessarily) pointed generalized permutahedra where $0$ is (in) the apex. We call these (pre)poset cones, and identify the isomorphic pairs of Hopf monoids above.

\begin{proposition} \label{prop:P=PP}
\begin{enumerate} 
\item
There is an equality of Hopf monoids $\mathbb I(\mbf{P}) = \mathbb I(\mbf{PP})$.
\item
The indicator functions of the totally ordered preposets $\ell$ on ground set $I$ form a basis for $\val(\mbf{PP}[I])$. For any preposet $q$, the expansion of $\one_{q}$ in this basis is
 \[ \one_{q} = \sum_{\ell \text{ prelin. ext. of $q$}} (-1)^{|q| - |\ell|} \one_{\ell}. \]
\item
A valuation $f$ on poset cones extends uniquely to a \textbf{strong valuative extension} $\hat{f}$ on preposet cones.
The assignment $f \mapsto \hat{f}$ is compatible with Theorem \ref{mainthm:vals} and Corollary \ref{cor: character theory valuation}. 
\end{enumerate}
\end{proposition}

The compatibility of the assignment $f \mapsto \hat{f}$ with Theorem \ref{mainthm:vals} is the following:
Suppose $\hat{f}_1, \ldots, \hat{f}_k$ are valuations on preposets which extend the valuations $f_1, \ldots, f_k$ on posets. Then for any ordered set partition $S_1 \sqcup \cdots \sqcup S_k$, the valuation on preposets given by $m_{A} \circ \hat{f_1} \otimes \cdots \otimes \hat{f_k} \circ \Delta_{S_1,\ldots,S_k}$ extends the valuation on posets given by $m_{A} \circ f_1 \otimes \cdots \otimes f_k \circ \Delta_{S_1,\ldots,S_k}$. 
Similarly, the compatibility with Corollary \ref{cor: character theory valuation} is the following: If $\zeta$ is a character on posets and $\hat{\zeta}$ is its extension to preposets, then the extension of the poset invariants $f_\zeta, \Phi_\zeta,$ and $O_\zeta$ are the preposet invariants
$f_{\hat{\zeta}}, \Phi_{\hat{\zeta}},$ and $O_{\hat{\zeta}}$, respectively. 

\begin{proof} 1. We prove the equality of species by proving that the vector spaces are isomorphic on any fixed finite ground set $I$.

\medskip
\noindent
$\subseteq$: Every poset is a preposet.

\medskip
\noindent
$\supseteq$: We need to prove that every preposet cone equals a linear combination of poset cones in $\mathbb I (\mbf{PP})$. We proceed by reverse induction on $|q|$. If $|q| = |I|$ then $q$ is a poset and the result is trivial. For $|q|<|I|$, consider an equivalence class $A$ of $q$ of size at least $2$ and an element $a \in A$. Define the preposets obtained from $q$ as follows:
\begin{eqnarray*}
q_+: && \text{Make $a$ greater than $A-a$ and keep all other relations of $q$ intact.} \\
q_-: && \text{Make $a$ less than $A-a$ and keep all other relations of $q$ intact.} \\
q_\pm: && \text{Make $a$ incomparable to $A-a$ and keep all other relations of $q$ intact.} 
\end{eqnarray*}
Let $C = \operatorname{cone}(q_{\pm})$. Let $b \in A-a$; notice that neither $e_a-e_b$ nor $e_b-e_a$ is in the cone $C$. We have 
\[
\operatorname{cone}(q_+) = C + \R_{\geq 0}(e_a-e_b), \qquad 
\operatorname{cone}(q_-) = C + \R_{\leq 0}(e_a-e_b), \qquad 
\operatorname{cone}(q) = C + \R(e_a-e_b)
\]
and one readily verifies that:
\[
\operatorname{cone}(q_+) \cap \operatorname{cone}(q_-) = 
\operatorname{cone}(q_\pm), \qquad 
\operatorname{cone}(q_+) \cup \operatorname{cone}(q_-) = 
\operatorname{cone}(q).
\]
The only nontrivial claim here is that $\operatorname{cone}(q_+) \cap \operatorname{cone}(q_-) \subseteq \operatorname{cone}(q_{\pm}).$ This follows by observing that for any point $x \in \operatorname{cone}(q_+)$ such that
$x \notin \operatorname{cone}(q_\pm)$, there is a hyperplane $H$ for which  $x$ is in the positive half-space and $ \operatorname{cone}(q_\pm)$ is in the negative half-space. But then $e_a-e_b$ is in the positive half-space, so $\operatorname{cone}(q_-)$ is in the negative half-space and cannot contain $x$. 

It follows that
\[
\one_q + \one_{q_\pm} = \one_{q_+} + \one_{q_-}.
\]
Since $|q_\pm| = |q_+| = |q_-| = |q|+1$, by induction, the cones $\one_{q_\pm}, \one_{q_+}, \one_{q_-}$ are linear combinations of poset cones. Therefore, so is $\one_q$.
The result follows by induction.

\medskip
\noindent
2. This follows from the proof of Lemma \ref{lem:gens}. The linear relation says that the aligning generators. The cones of the totally ordered preposets of $I$ are the centered plates; their indicator functions are linearly independent by \cite[Theorem 2.7]{ESS}.

\medskip
\noindent
3. This follows readily, since valuations on $\mbf{(P)P}$ correspond to linear functions on $\mathbb I(\mbf{(P)P})$. The compatibility follows readily from the definitions.
%
%
%
\end{proof}

%
%

%

All of the valuations on posets studied in this section will be built out of the following simple valuation. Say a preposet is a \textbf{preantichain} if there are no $x,y$ such that $x<y$; in other words, if $x \leq y$ then $y \leq x$. Note that for posets, this restricts to the usual notion of antichains.

\begin{definition}\label{def:alpha}
The antichain and preantichain characters are defined as follows.
\begin{enumerate} 
\item 
The \textbf{antichain character}\footnote{When we regard posets $\mbf{P} \cong \mbf{PCGP}^+_0 \subset \mbf{GP}$ as a Hopf submonoid of generalized permutahedra, the antichain character of $\mbf{P}$ is the restriction of the basic character of $\mbf{GP}$ defined in \cite{AA17}.} $\alpha: \mbf{P}[I] \to \F$ on posets is given by
 \[ \alpha(p) = \begin{cases}
 1 & \text{if $p$ is an antichain} \\
 0 & \text{otherwise}.
 \end{cases}\]
\item
The \textbf{preantichain character} $\hat{\alpha}: \mbf{PP}[I] \to \F$ on preposets is given by
 \[ \hat{\alpha}(q) = \begin{cases}
 (-1)^{|I| - |q|} & \text{if $q$ is a preantichain} \\
 0 & \text{otherwise}.
 \end{cases}\]
\end{enumerate}
\end{definition}

We now show that these characters are indeed valuations.

\begin{proposition} \label{prop:preanti}
The preantichain character $\hat{\alpha}$ and the antichain character $\alpha$ are strong valuations on preposets $\mbf{PP}$ and posets $\mbf{P}$, respectively. Furthermore, the preantichain character $\hat{\alpha}$ on preposets is the strong valuative extension of the antichain character ${\alpha}$ on posets given by  Proposition \ref{prop:P=PP}.3.
\end{proposition}

\begin{proof} 
By Lemma \ref{lem:gens} and Theorem \ref{thm:quotient}, to prove that  $\hat{\alpha}$ is a strong valuation,  it suffices to check that $\hat{\alpha}$ is  zero on any aligning generator $C-\al^-(C)$ corresponding to $C = \operatorname{cone}(q)$ for a preposet $q$. 

If $q$ is not a preantichain, then none of its linear extensions is a preantichain either, so we have $\hat{\alpha}(C-\al^-(C)) = 0 - 0 = 0$.
If $q$ is a preantichain then its only prelinear extension that is a preantichain is the trivial antichain $t$ consisting of all elements in one equivalence class; therefore we have 
$\hat{\alpha}(C-\al^-(C)) = \hat{\alpha}(q-(-1)^{|q|-|t|} \, t) = (-1)^{|I|-|q|} - (-1)^{|q|-1}(-1)^{|I|-1} = 0$. 

We conclude that $\hat{\alpha}$ is a valuation on preposets. Restricting to posets, we obtain that $\alpha$ is a valuation on posets as well. 

Since $\alpha$ and $\hat{\alpha}$ agree on posets, $\hat{\alpha}$ must be the strong valuative extension of $\alpha$.
\end{proof}

\subsection{The order polynomial}

As a first application, we study the order polynomial of posets.
The \textbf{(strict) order polynomial} of the poset $p$ is defined as the unique polynomial such that for any positive integer $k$ we have
\[ \Omega^{(s)}(p)(k) = \text{number of (strictly) order-preserving maps $p \to [k]$.}\]

\begin{proposition}\cite{AA17} The associated polynomial $\Omega_{\alpha}(p)(t)$ to the antichain character $\alpha$ is the \textbf{strict order polynomial} $\Omega^s(p)(t)$.
\end{proposition}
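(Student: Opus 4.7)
The plan is to unfold the definition of the polynomial invariant $f_\alpha(p)(t)$ and match the resulting combinatorial sum with the definition of $\Omega^s(p)(t)$. Since both are polynomials in $t$, it suffices to show they agree at every positive integer $k$, so it is enough to prove
\[
\alpha^{*k}(p) \;=\; \#\{\phi : p \to [k] \text{ strictly order-preserving}\}
\]
for all $k \geq 1$.

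First, I would apply the definition of the $k$-fold convolution together with the iterated coproduct in $\mbf{P}$. Unraveling the comultiplication $\Delta_{S,T}(p) = p|_S \otimes p|_T$ (which is $0$ unless $S$ is a lower ideal of $p$), induction yields
\[
\Delta_{S_1, \ldots, S_k}(p) \;=\; p|_{S_1} \otimes p|_{S_2} \otimes \cdots \otimes p|_{S_k}
\]
whenever every prefix $F_i = S_1 \sqcup \cdots \sqcup S_i$ is a lower ideal of $p$, and is $0$ otherwise. Consequently
\[
\alpha^{*k}(p) \;=\; \sum_{S_1 \sqcup \cdots \sqcup S_k = I} \alpha(p|_{S_1}) \cdots \alpha(p|_{S_k}),
\]
summed over set decompositions (possibly with empty parts) such that each prefix is a lower ideal; a nonzero contribution requires each induced subposet $p|_{S_i}$ to be an antichain of $p$, in which case the contribution is $1$.

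Second, I would set up the bijection between such decompositions and strictly order-preserving maps $\phi: p \to [k]$. Given $\phi$, let $S_i := \phi^{-1}(i)$ and $F_i := \phi^{-1}(\{1, \ldots, i\})$; conversely, given a valid decomposition, define $\phi$ by $\phi(x) = i$ whenever $x \in S_i$. The lower-ideal condition on every $F_i$ translates exactly to $\phi$ being weakly order-preserving ($x \leq y \Rightarrow \phi(x) \leq \phi(y)$), and the antichain condition on every $S_i$ translates exactly to $\phi$ being injective on each comparable pair. Together these two conditions are equivalent to strict order-preservation ($x < y \Rightarrow \phi(x) < \phi(y)$), which is the only nontrivial checkpoint but is a direct verification.

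Finally, combining these two steps gives $\alpha^{*k}(p) = \Omega^s(p)(k)$ for every positive integer $k$, so $f_\alpha(p)(t) = \Omega^s(p)(t)$ as polynomials. The only genuine subtlety is making sure empty parts $S_i = \emptyset$ contribute $\alpha(\text{empty poset}) = 1$ (which they do by unitality of the character), corresponding to non-surjective maps $\phi$; with this in place the argument is a clean unfolding and requires no further input.
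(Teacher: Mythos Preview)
Your proof is correct. Note that the paper does not actually prove this proposition; it is quoted from \cite{AA17}, and your argument is precisely the standard one given there: expand $\alpha^{*k}(p)$ via the iterated coproduct for posets, observe that the surviving terms are indexed by decompositions $(S_1,\ldots,S_k)$ whose prefixes are lower ideals and whose parts are antichains, and biject these with strictly order-preserving maps $p\to[k]$.
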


This Hopf-theoretic interpretation readily gives the following result.

\begin{proposition}
The order polynomial and strict order polynomial are strong valuations on posets.
\end{proposition}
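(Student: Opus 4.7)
My plan is to obtain both statements from what is already in place. For the strict order polynomial, I would invoke the character-theoretic machinery directly; for the (weak) order polynomial, I would pass to it via Stanley's classical reciprocity. No new combinatorial analysis should be required.

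For the strict case, I would combine three ingredients. First, Proposition \ref{prop:preanti} shows that the antichain character $\alpha$ is a strong valuation on posets. Second, the preceding proposition (due to Aguiar--Ardila) identifies the polynomial invariant associated to $\alpha$ as the strict order polynomial, namely $f_\alpha(p)(t) = \Omega^s(p)(t)$. Third, Corollary \ref{cor: character theory valuation} asserts that the polynomial invariant of a character that is a strong valuation is itself a strong valuation. Since $\mbf{P}$ embeds into $\mbf{GP}^+$ via $\operatorname{cone}$ by Theorem \ref{thm:CGP}, these three ingredients combine immediately to show that $p \mapsto \Omega^s(p)(t)$ is a strong valuation on $\mbf{P}$.

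For the (weak) order polynomial, I would invoke Stanley's reciprocity identity
\[
\Omega(p)(t) = (-1)^{|I|}\,\Omega^s(p)(-t)
\]
for posets $p$ on the finite set $I$. Writing $\Omega^s(p)(t) = \sum_i c_i(p)\, t^i$, the valuativeness of $\Omega^s$ in $p$ is equivalent to each coefficient $p \mapsto c_i(p)$ being a strong valuation. The substitution $t \mapsto -t$ merely rescales $c_i(p)$ by $(-1)^i$, and multiplication by $(-1)^{|I|}$ is a global constant on the component $\mbf{P}[I]$; both operations preserve valuativeness. Therefore $p \mapsto \Omega(p)(t)$ is a strong valuation as well.

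There is no substantive obstacle here, since the real content was already encoded in Proposition \ref{prop:preanti} and in the Aguiar--Ardila identification $f_\alpha = \Omega^s$. The only mild subtlety worth flagging is that Corollary \ref{cor: character theory valuation} is stated for submonoids of $\mbf{GP}$, but its proof rests on Theorem \ref{mainthm2}, which is formulated at the level of $\mbf{GP}^+$ -- and this is precisely the generality needed to accommodate the unbounded poset cones corresponding to elements of $\mbf{P}$.
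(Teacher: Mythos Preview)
Your proposal is correct and follows essentially the same approach as the paper: invoke Proposition \ref{prop:preanti} together with Corollary \ref{cor: character theory valuation} for the strict order polynomial, then pass to the weak order polynomial via Stanley's reciprocity. Your added remark about the $\mbf{GP}$ versus $\mbf{GP}^+$ discrepancy in the hypotheses of Corollary \ref{cor: character theory valuation} is a worthwhile observation that the paper glosses over.
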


\begin{proof}
The antichain character is a strong valuation by Proposition \ref{prop:preanti}, so Corollary \ref{cor: character theory valuation} implies that the strict order polynomial is a strong valuation.
Stanley's reciprocity theorem (which is explained Hopf-theoretically in \cite{AA17}) says that $\Omega(p)(n) = (-1)^{|I|} \Omega^s(p)(-n)$, so the order polynomial is a strong valuation as well.
\end{proof}

\subsection{The poset Tutte polynomial}

For any antichain $A \subseteq I$ of $p$, let 
 \[
 J_\geq(A) = \{x \in p \; \lvert \; x \geq y \text{ for some $y \in A$}\}, \qquad 
 J_>(A) = \{x \in p \; \lvert \; x > y \text{ for some $y \in A$} \}. 
 \]
For any poset $p$, let $\mathcal{A}(p)$ denote the set of antichains of $p$.

\begin{definition}\cite{Gor93} The \textbf{Tutte polynomial} of a poset $p$ on ground set $I$ is 
 \[T_p(x,y) = \sum_{A \in \mathcal{A}(p)} x^{|J_\geq(A)|}(y+1)^{|J_>(A)|}.\]
\end{definition}

The set of lower ideals of a poset forms an antimatroid, and the Tutte polynomial for posets is a special case of the Tutte polynomial of antimatroids; see \cite{KSL91}. We now show it is a strong valuation.



\begin{proposition} 
The poset Tutte polynomial $T_p(x,y)$ is a strong valuation of posets.
\end{proposition}

\begin{proof} 
Let
\[
f_1(p) = 1, \qquad 
f_2(p) = \alpha(p) \cdot x^{|p|}, \qquad
f_3(p) = x^{|p|}(y+1)^{|p|}.
\]
where $\alpha(p)$ is the antichain character of Definition \ref{def:alpha}. 
%
Their convolution is
\[
 f_1 \star f_2 \star f_3(p) 
 = \sum_{\substack{S_1 \sqcup S_2 \sqcup S_3 = I \\ S_2 \text{ antichain}}} x^{|S_2|}(x(y+1))^{|S_3|}. 
= T_p(x,y).
\]
 where we sum over ordered set partitions $S_1 \sqcup S_2 \sqcup S_3$ where $S_1$ is a lower ideal of $p$, $S_2$ is an antichain and a lower ideal of $p - S_1$, and $S_3$ is an upper ideal of $p$.
For a fixed ground set $I$, the functions $f_1, f_2, f_3$ are constant multiples of $1$ and $\alpha(p)$, which are strong valuations thanks to Proposition \ref{prop: Euler characteristic} and \ref{prop:preanti}. Therefore their convolution is a strong valuation by Corollary \ref{cor: convolution valuations}.
\end{proof}


\begin{corollary} 
The following quantities and their dual quantities are strong valuations on posets:
\begin{itemize}
        \item The number of order ideals of $p$ of size $k$.
        \item The number of antichains of size $k$.
               \item The number of maximal elements of $p$.
        \item The generating function $\displaystyle G_p(s,t) = \sum_{A \text{ antichain}} s^{|J_{\leq}(A)|}t^{|A|}$.
\end{itemize}
\end{corollary}

\begin{proof}
This follows from the fact that these quantities are 
the coefficient of $t^{|I| - k}$ of $T_p(t,0)$, 
the coefficient of $t^k$ in $T_p(t,t^{-1} - 1)$, 
the exponent of $T_p(t,-1) = (t+1)^M$ -- which equals $\frac{d}{dt}(T_p(t,-1))|_{t=0}$ -- and 
$T_{p}(st,t^{-1}-1)$, respectively \cite{Gor93}.
Since the cone of the reverse poset $-p$ is $\operatorname{cone}(-p) = - \operatorname{cone}(p)$, the dual quantities are also strong valuations.
\end{proof}

%
%

\subsection{The Poincar\'e polynomial}

%
%

Let $\Sigma_I$ be the braid arrangement in $\R^I$ and $\mathcal{L}(\Sigma_I)$ be its lattice of intersections, ordered by reverse inclusion; its minimum element is $V=\R^I$.
For a poset $p$, consider the open braid cone 
 \[\sigma^o_p =\{ x \in \R^I \; \lvert \; x_i > x_j \text{ whenever } i \geq_p j\}.\]
 Its closure is dual to the poset cone of $p$.

\begin{definition} \cite{DBKR19, Zaslavsky75} Let $p$ be a poset on $I$. The \textbf{interior intersection lattice} of the braid arrangement $\Sigma_I$ with respect to the poset $p$ is the sublattice of intersections which meet the interior of the cone $\sigma^o(p)$ 
\[
\mathcal{L}_{p}(\Sigma_I) = \{ X \in \mathcal{L}(\Sigma_I) \; \lvert \; X \cap \sigma^o_p \not = \emptyset\}.
\]
ordered by reverse inclusion. The \textbf{Poincar\'e polynomial} of poset $p$ is
 \[
 \Poin(p,t) := \sum_{\substack{X \in \mathcal{L}_{p}(\Sigma_I)}} |\mu(V,X)| \, t^k =:  \sum_{k} c_k(p) \, t^k.
 \]
\end{definition}

Zaslavsky showed that $\Poin(p,1)$ is the number of chambers of the braid arrangement $\Sigma_I$ that lie inside the cone $\sigma^o_p$.
In order to relate the Poincar\'e polynomial to a valuation, we use the following formula of Dorpalen-Barry, Kim, and Reiner.

\begin{definition} Let $p$ be a poset on ground set $I$. A \textbf{transverse ordered set partition} of $p$ is an ordered set partition $S_1 \sqcup \cdots \sqcup S_k = I$ such that $p|_{S_i}$ is an antichain and $S_i$ is a lower ideal of $p|_{S_i \sqcup \cdots \sqcup S_k}$ for each $i$. A \textbf{transverse unordered set partition} of $p$ is an unordered set partition $\{S_1, \ldots S_k\}$ such that there exists an ordering that makes it into a transverse ordered set partition of $p$.
\end{definition}

\begin{proposition}\cite{DBKR19} Let $p$ be a poset on $I$. Let $\Pi_p^{T}$ denote the set of transverse \textbf{unordered} set partitions of $p$. Then,

\[\Poin(p,t) = \sum_{\{S_1, \ldots, S_k\} \in \Pi_p^{T}} \; \prod_{i =1}^k \left ( |S_i| - 1\right)! \; t^{k} \]
\end{proposition}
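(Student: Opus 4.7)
The plan is to unpack the definition of the Poincar\'e polynomial directly, matching each term $|\mu(V,X)|\,t^{\dim X}$ with a summand on the right-hand side. The approach rests on three ingredients: a bijective identification of the interior intersection lattice with $\Pi_P^T$, a M\"obius calculation on the partition lattice, and a dimension count. (Note that, checking against small cases, the statement is consistent with the convention $\Poin(P,t)=\sum_{X\in\mathcal L_P(\Sigma_I)}|\mu(V,X)|t^{\dim X}$, which I will use throughout.)

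First I would parameterize the flats of the braid arrangement $\Sigma_I$ in the standard way: an unordered set partition $\pi=\{S_1,\ldots,S_k\}$ of $I$ corresponds to the flat
\[
X_\pi=\{x\in\R^I\mid x_i=x_j \text{ whenever } i,j \text{ lie in the same block of }\pi\},
\]
and $\dim X_\pi=k$. I would then verify the bijection $\mathcal L_P(\Sigma_I)\leftrightarrow \Pi_P^T$ by showing that $X_\pi$ meets the relative interior of $\sigma_P$ if and only if $\pi$ is transversal. Indeed, if $x\in X_\pi\cap\mathrm{relint}(\sigma_P)$, then each block must be an antichain of $P$ (otherwise $a<_P b$ with $x_a=x_b$ would be inconsistent with the strict inequality produced by the relative interior), and the induced relation on blocks, defined by $S_i\preceq S_j$ whenever some $a\in S_i,\,b\in S_j$ satisfy $a\leq_P b$, must be antisymmetric. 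Any linear extension of the resulting quotient poset provides a peeling order witnessing $\pi\in\Pi_P^T$, and conversely any peeling order produces such an $x$ by assigning strictly increasing real values to the blocks in that order.

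Second I would compute $|\mu(V,X_\pi)|$ using the classical fact that the interval $[V,X_\pi]$ in the intersection lattice of $\Sigma_I$ is isomorphic to the product of partition lattices $\Pi_{S_1}\times\cdots\times\Pi_{S_k}$ (one independently refines each block of $\pi$). By multiplicativity of the M\"obius function and the standard identity $\mu_{\Pi_n}(\hat 0,\hat 1)=(-1)^{n-1}(n-1)!$, this gives
\[
|\mu(V,X_\pi)|=\prod_{i=1}^k(|S_i|-1)!.
\]
Summing $|\mu(V,X_\pi)|\,t^{\dim X_\pi}$ over $\pi\in\Pi_P^T$ yields exactly the right-hand side of the claimed formula.

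The main obstacle will be the equivalence in the first step. One must carefully argue that the combinatorial condition defining $\Pi_P^T$, namely the existence of a peeling order of lower antichains, is equivalent to the quotient of $P$ by the block relation being a genuine poset, and that this in turn is equivalent to the geometric condition $X_\pi\cap\mathrm{relint}(\sigma_P)\neq\emptyset$. This reduces essentially to the observation that a finite quotient relation with each fibre an antichain is antisymmetric precisely when it admits a linear extension; once this is in hand, the M\"obius computation and the summation are routine bookkeeping.
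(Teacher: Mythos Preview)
The paper does not supply its own proof of this proposition: it is quoted verbatim from \cite{DBKR19} and then used as input for the valuation results that follow. So there is no ``paper's proof'' to compare against; you are reconstructing the argument from the cited source.

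Your reconstruction is correct. The three ingredients you isolate are exactly what is needed. The bijection $\mathcal L_P(\Sigma_I)\leftrightarrow\Pi_P^T$ works as you describe (and you are right that the condition must be $X_\pi\cap\operatorname{relint}(\sigma_P)\neq\emptyset$; with the closed cone the condition is vacuous since every braid flat contains the diagonal line). The M\"obius computation is the standard one for the partition lattice, and the interval $[V,X_\pi]$ is taken inside the full intersection lattice $\mathcal L(\Sigma_I)$, not the subposet $\mathcal L_P(\Sigma_I)$, so the product formula $\prod_i(|S_i|-1)!$ applies without further justification. You also correctly flagged the convention issue: the exponent must be $\dim X_\pi=k$ (number of blocks) for the formula to hold, whereas the paper's surrounding text defines the Whitney numbers via codimension; the proposition as stated is consistent with the $\dim$ convention.

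The only place to tighten is the ``main obstacle'' you mention. The equivalence between ``$\pi$ admits a peeling order by lower antichains'' and ``the quotient relation on blocks is a poset'' is immediate once you observe that the lower-ideal condition on $S_i$ in $P|_{S_i\cup\cdots\cup S_k}$ says precisely that no element of $S_i$ lies above an element of a later block; this forces the block relation to be acyclic, and conversely any linear extension of an acyclic block relation gives a peeling order. This is a two-line check, not a genuine obstacle.
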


Before we describe the connection between the Poincar\'e polynomial and valuations, we will describe an ordered analog of the Poincar\'e polynomial.

\begin{proposition}\label{prop: ordered poincare polynomial} Let $p$ be a poset on $I$ and $\Sigma_p^T$ denote the set of transverse \textbf{ordered} set partitions of $p$. The function $\Phi$ given by
 \[ \Phi(p,t) = \sum_{ S_1 \sqcup \cdots \sqcup S_k \in \Sigma_p^T} \; \prod_{i=1}^k \left ( |S_i| - 1 \right)! \; t^k\]
is a strong valuation on posets.
\end{proposition}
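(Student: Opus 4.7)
The plan is to recognize $\Phi$ as a (finite) sum of convolutions of known valuations on the Hopf monoid $\mbf{P}$ of posets, and then invoke Theorem \ref{mainthm2}. The two ingredients packaged by the transversal condition are exactly: (a) the iterated coproduct of $\mbf{P}$ enforcing the ``lower ideal at each stage'' requirement, and (b) the antichain character $\alpha$ of Proposition \ref{prop:preanti}, which enforces the ``$P|_{S_i}$ is an antichain'' requirement. Both are valuations: $\alpha$ by Proposition \ref{prop:preanti}, and $\val(\mbf{P})$ is a comonoid quotient of $\mbf{P}$ by Theorem \ref{mainthm1}(4) together with the equality $\val(\mbf{P}) = \val(\mbf{PP})$ proved at the start of Section~8.

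First I would recall the iterated coproduct formula for $\mbf{P}$: by associativity of $\Delta$, for any ordered set partition $S_1 \sqcup \cdots \sqcup S_k = I$ into nonempty parts,
\[
\Delta_{S_1,\ldots,S_k}(P) \;=\; P|_{S_1}\otimes P|_{S_2}\otimes\cdots\otimes P|_{S_k}
\]
whenever each $S_i$ is a lower ideal of $P|_{S_i \sqcup \cdots \sqcup S_k}$ (equivalently, each prefix $S_1\sqcup\cdots\sqcup S_j$ is a lower ideal of $P$), and $\Delta_{S_1,\ldots,S_k}(P) = 0$ otherwise.

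Next, for each such ordered set partition I would define the maps $f_i : \mbf{P}[S_i]\to\R[t]$ by
\[
f_i(Q) \;=\; (|S_i|-1)!\, t\cdot \alpha(Q).
\]
Each $f_i$ is a scalar multiple of the valuation $\alpha$, hence a valuation. By Theorem \ref{mainthm2}, applied to the submonoid $\mbf{P}\subseteq\mbf{GP^+}$, the composite
\[
\Phi_{S_1,\ldots,S_k}(P)\;:=\;m_{\R[t]}\circ (f_1\otimes\cdots\otimes f_k)\circ \Delta_{S_1,\ldots,S_k}(P)
\]
is a valuation. By the iterated coproduct formula, this composite equals $\left(\prod_{i=1}^k (|S_i|-1)!\right)t^k$ precisely when $\Delta_{S_1,\ldots,S_k}(P)\neq 0$ \emph{and} each $P|_{S_i}$ is an antichain, and vanishes otherwise; that is, it is nonzero exactly when $S_1\sqcup\cdots\sqcup S_k$ is a transversal ordered set partition of $P$.

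Finally, I would sum over all ordered set partitions of $I$ into nonempty parts (a finite sum, since $k\leq |I|$) to obtain $\sum_k\sum_{S_1\sqcup\cdots\sqcup S_k = I} \Phi_{S_1,\ldots,S_k}(P)=\Phi(P)$. Theorem \ref{mainthm2} guarantees that sums of such valuations are valuations, so $\Phi$ is a valuation on $\mbf{P}$. I do not foresee any serious obstacle: the proof is simply the observation that the two defining conditions of a transversal ordered set partition factor exactly along the comonoid structure of $\mbf{P}$ and the antichain character, both of which the earlier sections have already shown are valuation-compatible.
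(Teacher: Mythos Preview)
Your proposal is correct and follows essentially the same approach as the paper: both express $\Phi$ as a finite sum over all ordered set partitions of $I$ of terms $m\circ(\text{antichain-character factors})\circ\Delta_{S_1,\ldots,S_k}$, and then invoke Theorem~\ref{mainthm2}. The only cosmetic difference is that you absorb the scalars $(|S_i|-1)!\,t$ into the maps $f_i$, whereas the paper keeps $\alpha^{\otimes k}$ pure and multiplies by the scalar weight when forming the linear combination.
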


\begin{proof} Let $\alpha$ be the antichain character. The function 
 \[ p \mapsto m \circ \alpha^{\otimes k} \circ \Delta_{S_1,\ldots,S_k}(p). \]
equals $1$ if $S_1 \sqcup \cdots \sqcup S_k = I$ is a transverse ordered set partition of $p$, and equals $0$ otherwise. By Theorem \ref{mainthm:vals}, this function is a strong valuation. The function $\Phi$ is a linear combination of these; we have
 \[\Phi(p,t) = \sum_{S_1 \sqcup \cdots \sqcup S_k} \; \prod_{i=1}^k \left (|S_i| - 1 \right)! \; t^k \cdot \left( m \circ \alpha^{\otimes k} \circ \Delta_{S_1,\ldots,S_k}(p) \right ), \]
summing over all ordered set partitions of $I$. Therefore it is a strong valuation.
\end{proof}

The functions $\Poin(p,t)$ and $\Phi(p,t)$ have a similar form, but the former is given by a sum over transverse unordered set partitions while the latter is given by a sum over transverse ordered set partitions. 
To prove results about $\Poin(p,t)$, we will pay more careful attention to the labelling of the poset.

\begin{definition} 
Let $\ell$ be a linear order on the ground set $I$. An ordered set partition $S_1 \sqcup \cdots \sqcup S_k = I$ is $\ell$-\textbf{increasing} if 
$\min_\ell S_1 <_\ell \cdots <_\ell \min_\ell S_k$.
\end{definition}

With these definitions, define the function $\Phi_\ell$ by
\begin{align*}
\Phi_\ell(p,t) &= \sum_{\substack{S_1 \sqcup \cdots \sqcup S_k \in \Sigma_{p}^T \\ \ell-\text{increasing}}} \; \prod_{i=1}^k \left ( |S_i| - 1 \right)! \; t^k \\
    &=  \sum_{\substack{S_1 \sqcup \cdots \sqcup S_k \\ \ell-\text{increasing}}} \; \prod_{i=1}^k \left ( |S_i| - 1 \right)! \; \left ( m \circ \alpha^{\otimes k} \circ \Delta_{S_1,\ldots,S_k}(p) \right )
\end{align*}
By a similar argument as Proposition \ref{prop: ordered poincare polynomial}, we have that $\Phi_\ell$ is a strong valuation for each $\ell$.

\begin{proposition} For any linear extension $\ell$ of the poset $p$ we have
 \[\Poin(p,t) = \Phi_{\ell}(p,t). \]
\end{proposition}

\begin{proof} We prove this by constructing a bijection between the $\ell$-increasing transverse ordered set partitions and the unordered transverse set partitions.
Let $\{T_1, \ldots, T_k\}$ be a transverse unordered set partition. We will find an ordered set partition $S_1 \sqcup \cdots \sqcup S_k$ such that for all $i$ we have that $S_i = T_j$ for some $j$. Let $\ell_1$ denote the minimal element of $I$ with respect to $\ell$. In order for $S_1 \sqcup \cdots \sqcup S_k$ to be $\ell$-increasing, it must be the case that $S_1$ is the part $T_j$ which contains $\ell_1$. Because $p$ is $\ell$-increasing, $S_1$ will be a lower ideal of $p$. Recursively, to determine $S_i$, let $\ell_{i}$ be the minimal element of $I - S_1 - S_2 - \cdots - S_{i-1}$ and note that $S_i$ must be the part $T_j$ that contains $\ell_i$. By construction, $S_1 \sqcup \cdots \sqcup S_k$ will be an ordered transverse set partition. Further, it is clear from this construction that this is the only ordering of $\{T_1,\ldots, T_k\}$ with the required properties.

This gives a bijection between $\ell$-increasing ordered transverse set partitions and unordered transverse set partitions. Therefore the formulas for $\Poin(p,t)$ and $\Phi_{\ell}(p,t)$ coincide.
\end{proof}

Since every poset $p$ is properly labelled with respect to some linear order, this gives a way of studying $\Poin(p,t)$ for any poset $p$ using valuations. We illustrate this general principle with the following concrete result.

\begin{corollary}\label{cor:Poincareval}
The Poincar\'e polynomial is a weak valuation for posets:
If the poset cone of a poset $p$ can be subdivided into the poset cones of the posets $p_1, \ldots, p_k$, then 
\[
\Poin(p,t) = \sum_{i=1}^k (-1)^{c(p_i)-c(p)}\Poin(p_i,t).
\]
where $c(q)$ is the number of connected components of the Hasse diagram of $q$.
\end{corollary}

\begin{proof}
Let $\ell$ be a linear extension of $p$. Then $\operatorname{cone}(\ell)$ contains $\operatorname{cone}(p)$ and hence it contains $\operatorname{cone}(p_i)$ for each preposet $p_i$, so $\ell$ is also a linear extension for them. Thus for each one of these posets the Poincar\'e polynomial coincides with $\Phi_\ell$, which is a strong valuation. Since the poset cone of $q$ has dimension $|I|-c(q)$, the desired equation follows.
\end{proof}

\begin{figure}[h]
\centering
\includegraphics[width=4cm]{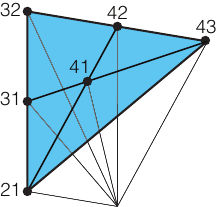}\qquad \qquad
\includegraphics[width=10cm]{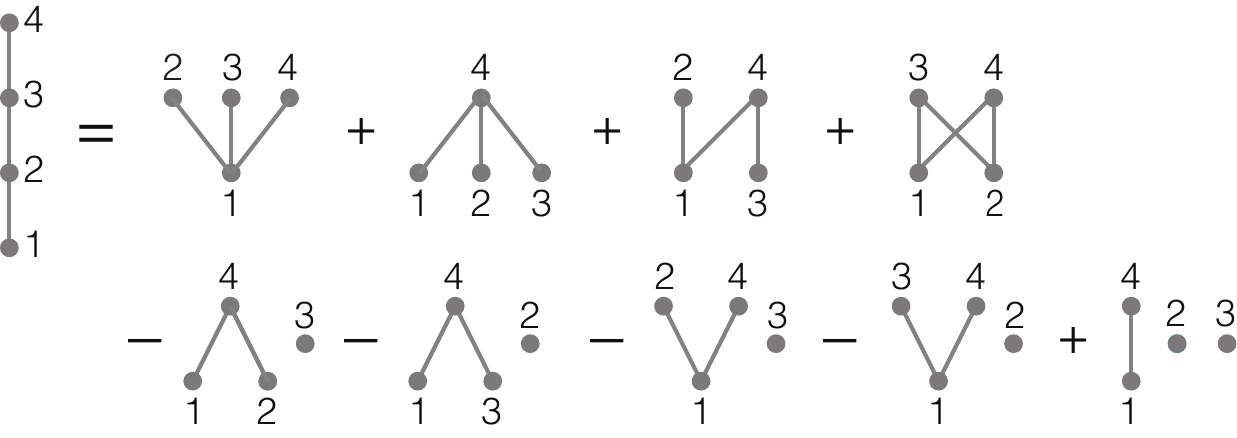}
\caption{A poset subdivision of a poset cone and the corresponding relation on posets in $\mbf{ie} \cap \mbf{P}$.
\label{fig:posetsubdiv}}
\end{figure}

\begin{example}\label{ex:posetvaluation}
Figure \ref{fig:posetsubdiv} shows a subdivision of the poset cone of the chain $1<2<3<4$ into four full-dimensional poset cones and five lower dimensional ones. Since Poincar\'e polynomials of posets are weak valuations by Corollary \ref{cor:Poincareval}, we obtain the following relation between the corresponding Poincar\'e polynomials:
\begin{eqnarray*}
1 &=& (1+3t+2t^2)+ (1+3t+2t^2) + (1+3t+t^2)+(1+2t+t^2) \\
&& - (1+4t+3t^2) - (1+4t+3t^2) - (1+4t+3t^2) - (1+4t+3t^2) + (1+5t+6t^2) .
\end{eqnarray*} 
\end{example}

%
%
%
%
%
%


\section{Building sets and nestohedra}\label{sec:valBS}

Building sets are a combinatorial abstraction of the notion of connectedness. They were introduced independently by Schmitt, seeking methods of understanding the chromatic polynomial \cite{Sch95}, and by De Concini and Procesi, in order to study the wonderful compactification of a hyperplane arrangement \cite{CP96}. Postnikov defined a polytope that encodes the combinatorial structure of a building set, called a nestohedron. \cite{postnikov09}
In this section we show that the $f$-polynomial of a nestohedron is strongly valuative, and we use this to show that there are no subdivisions of a nestohedron into smaller nestohedra. 

As explained in \cite{AA17}, nestohedra do not form a Hopf submonoid of $\mbf{GP}$. Thus it will be more convenient for us to work with a larger class of objects, namely, \textbf{multinestohedra} and the corresponding \textbf{building multisets}.

\begin{definition}
A \textbf{building multiset} $\mathcal{B}$ on ground set $I$ is a multiset of subsets of $I$ satisfying the following axioms:
\begin{itemize}
    \item If $A, B \in \mathcal{B}$ and $A \cap B \not = \emptyset$, then $A \cup B \in \mathcal{B}$.
    \item For all $i \in I$, $\{i\} \in \mathcal{B}$.
\end{itemize}
The \textbf{multinestohedron} of a building multiset $\mathcal{B}$ is the generalized permutahedron
 \[
 N_{\mathcal{B}} = \sum_{J \in \mathcal{B}} \Delta_J, 
 \]
where $\Delta_J$ is the simplex given by $\Delta_J = \operatorname{conv}(e_j \; \lvert \; j \in J)$, and the Minkowski sum contains repeated summands corresponding to the repeated subsets in $\mathcal{B}$.
\end{definition}

A \textbf{building set} is a building multiset with no repeated subsets, and its corresponding polytope is called a \textbf{nestohedron}.
The \textbf{simplification} of a building multiset $\mathcal{B}$ is the building set $\overline{\mathcal{B}}$ obtained by forgetting the multiplicities of the subsets in $\mathcal{B}$. The multinestohedron $N_{\mathcal{B}}$ has the same normal fan as the nestohedron $N_{\overline{\mathcal{B}}}$.

Several important polytopes are nestohedra; for example:


\noindent $\bullet$
The permutahedron, for the building set containing all subsets of $I$.

\noindent $\bullet$ 
The associahedron, for the building set consisting of all intervals $[i,j]$ of $\{1, \ldots, n\}$ for $i < j$.

\noindent $\bullet$
The \textbf{graph associahedron} of Carr and Devadoss \cite{CD06}, for the \textbf{graphical building set} of a graph $G$, which consists of the subsets $I$ of the vertex set for which the graph $G|_I$ is connected. 

\bigskip


The species of building multisets has the structure of a Hopf monoid, defined as follows.
Consider any decomposition $S \sqcup T = I$.
For a building multiset $\mathcal{B}_1$ on ground set $S$ and a building multiset $\mathcal{B}_2$ on ground set $T$, let 
 \[m_{S,T}(\mathcal{B}_1,\mathcal{B}_2) = \mathcal{B}_1 \sqcup \mathcal{B}_2\] where $\sqcup$ denotes the disjoint union of multisets.
For a building multiset $\mathcal{B}$ on ground set $I$, let
 \[ \mathcal{B}|_S = \{A \subseteq S \; \lvert \; A \in \mathcal{B}\},\]
where the multiplicity of $A$ in $\mathcal{B}|_S$ is the multiplicity of $A$ in $\mathcal{B}$. Let
\[\mathcal{B}/_S = \{C \cap T \; \lvert \; C \in \mathcal{B} \}, \]
where the multiplicity of $B \in \mathcal{B}/_S$ is the total number of $C \in \mathcal{B}$ such that $C \cap T = B$, counted with multiplicities. One readily verifies that $\mathcal{B}_1 \sqcup \mathcal{B}_2, \mathcal{B}|_S$, and $\mathcal{B}/_S$ are building sets.

The \textbf{coopposite} $\mbf{H}^{cop}$ of a Hopf monoid $\mbf{H}$ has the same product and the reverse coproduct of $\mbf{H}$; in Sweedler notation, if $\Delta_{S,T}(z) = \sum z|_S  \otimes z/_S$ in $\mbf{H}$, then $\Delta_{S,T}(z) = \sum z/_T \otimes z|_T$ in $\mbf{H}^{cop}$.

\begin{proposition} The linear species $\mbf{BMS}^{cop}[I] = \F\{\text{building multisets on $I$}\}$ forms a Hopf monoid with multiplication maps
 \[m_{S,T}(\mathcal{B}_1, \mathcal{B}_2) = \mathcal{B}_1 \sqcup \mathcal{B}_2, \]
and comultiplication maps
 \[ \Delta_{S,T}(\mathcal{B}) = \mathcal{B}/_S \otimes \mathcal{B}|_S.\]
The map $\mathcal{B} \mapsto N_{\mathcal{B}}$ induces an embedding of $\mbf{BMS}^{cop}$ into $\mbf{GP}$ as Hopf monoids.
\end{proposition}

\begin{proof} The species $\mbf{BMS}^{cop}$ is a Hopf submonoid of the coopposite Hopf monoid of hypergraphs $\mbf{HG}^{cop}$ given in \cite{AA17}, and the map above is the restriction of the analogous map from the coopposite $\mbf{HG}^{cop}$ to the Hopf monoid $\mbf{HGP}$ of hypergraphic polytopes described there. 
\end{proof}

We denote our Hopf monoid $\mbf{BMS}^{cop}$ because it naturally extends the coopposite $\mbf{BS}^{cop}$ of the Hopf monoid of building sets $\mbf{BS}$ defined in \cite{AA17}.

%
%

\subsection{The $f$-vector}

For any polytope $P$, let 
\[
f_P(t) = \sum_{\text{$F$ face of $P$}} t^{\dim F}
\]
denote the \textbf{$f$-polynomial} of the polytope. For any building multiset $\mathcal{B}$, let $f_{\mathcal{B}}(t)$ denote the $f$-polynomial of the hypernestohedron $N_{\mathcal{B}}$. Its coefficients constitute the \textbf{$f$-vector} of $P$.

The $f$-polynomial of a hypernestohedron can be computed recursively as follows. If $\mathcal{B}$ can not be written as a disjoint union of other building multisets, we say that $\mathcal{B}$ is \textbf{connected}. Otherwise, $\mathcal{B}$ will have a unique factorization $\mathcal{B} = \mathcal{B}_1 \sqcup \cdots \sqcup \mathcal{B}_k$. The building multisets $\mathcal{B}_i$ are called the \textbf{connected components} of $\mathcal{B}$.

\begin{proposition} \cite{postnikov09} \label{prop: f-polynomial recurrence} The $f$-polynomial of a hypernestohedron $N_{\mathcal{B}}$ is the unique polynomial satisfying the following properties:
\begin{enumerate}
    \item If $\mathcal{B}$ is equal to a singleton, then $f_{\mathcal{B}}(t) = 1$.
    \item If $\mathcal{B}$ is disconnected with connected components $\mathcal{B}_1, \ldots, \mathcal{B}_k$, then
     \[f_{\mathcal{B}}(t) = f_{\mathcal{B}_1}(t) \cdots f_{\mathcal{B}_k}(t).\]
    \item If $\mathcal{B}$ is connected, then
     \[f_{\mathcal{B}}(t) = \sum_{S \subsetneq I} t^{|I| - |S| - 1}f_{\mathcal{B}|_S}(t). \]
\end{enumerate}
\end{proposition}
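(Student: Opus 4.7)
The plan is to verify the three defining properties of $f_{\mathcal{B}}(t) := f_{N_{\mathcal{B}}}(t)$; uniqueness then follows by induction on $|I|$, with the singleton axiom providing the base case and both the disconnected and connected axioms reducing $f_{\mathcal{B}}(t)$ to $f$-polynomials of building sets on strictly smaller ground sets.

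The first two properties are direct polyhedral observations. The singleton case $\mathcal{B} = \{\{i\}\}$ gives $N_{\mathcal{B}} = \{e_i\}$, a point with $f$-polynomial $1$. For the disconnected case, if $\mathcal{B}$ has connected components $\mathcal{B}_1, \ldots, \mathcal{B}_k$ on disjoint ground sets $I_1, \ldots, I_k$, each simplex $\Delta_J$ with $J \in \mathcal{B}_i$ lies in the coordinate subspace $\R^{I_i} \subset \R^I$; these subspaces are pairwise orthogonal, so the Minkowski sum factors as the Cartesian product $N_{\mathcal{B}} = \prod_i N_{\mathcal{B}_i}$. Multiplicativity of the $f$-polynomial under Cartesian products -- faces of a product are products of faces and dimensions add -- yields $f_{\mathcal{B}}(t) = \prod_i f_{\mathcal{B}_i}(t)$.

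The connected recurrence is the substantive content. The plan is to invoke Postnikov's description \cite{postnikov09} of the face lattice of $N_{\mathcal{B}}$ in terms of $\mathcal{B}$-nested sets: for connected $\mathcal{B}$ on $I$, the nonempty faces of $N_{\mathcal{B}}$ correspond to nested sets $\mathcal{N} \subseteq \mathcal{B}$ with $I \notin \mathcal{N}$, and $\dim F_{\mathcal{N}} = |I|-|\mathcal{N}|-1$. I would then partition these nested sets by the subset $S = \bigcup_{N \in \mathcal{N}} N \subsetneq I$ they cover. The nested sets with support $S$ should be in a dimension-respecting bijection with the nested sets of the restricted building set $\mathcal{B}|_S$, with an overall dimension shift of $|I|-|S|-1$: intuitively, one removes from $\mathcal{N}$ the ``top" elements of the connected components of $\mathcal{B}|_S$ and tracks the resulting change in cardinality. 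Summing then contributes $t^{|I|-|S|-1} f_{\mathcal{B}|_S}(t)$ for each $S \subsetneq I$, yielding the recurrence; the $S = \emptyset$ term in particular recovers the top-dimensional face $N_{\mathcal{B}}$ itself.

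The main obstacle will be establishing this bijection between $\mathcal{B}$-nested sets with support $S$ and $\mathcal{B}|_S$-nested sets, especially when $\mathcal{B}|_S$ is disconnected -- where one must combine Postnikov's connected description with the product structure of the disconnected case, and check that the dimension shift $|I|-|S|-1$ is correct in both regimes. The detailed combinatorial verification is essentially the content of the face-lattice analysis of nestohedra found in \cite{postnikov09}.
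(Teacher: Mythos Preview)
The paper does not prove this proposition; it is quoted as a result from \cite{postnikov09} and used as a black box in the subsequent valuation argument. So there is no in-paper proof to compare against.

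Your sketch is the right route and is essentially Postnikov's argument. One small correction to your intuition for the connected case: with the convention you adopt (nested sets $\mathcal{N}$ not containing $I$, face dimension $|I|-|\mathcal{N}|-1$), the bijection between $\mathcal{B}$-nested sets with support $S$ and faces of $N_{\mathcal{B}|_S}$ does not require removing anything. The key observation is that any $\mathcal{B}$-nested set $\mathcal{N}$ with $\bigcup \mathcal{N} = S$ automatically contains every connected component $C$ of $\mathcal{B}|_S$: the maximal elements of $\mathcal{N}$ are pairwise disjoint and partition $S$, each lies in a single component of $\mathcal{B}|_S$, and if two or more of them were contained in $C$ then their union would equal $C \in \mathcal{B}$, violating the nested-set axiom. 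Thus $\mathcal{N}$ is already a $\mathcal{B}|_S$-nested set containing all maximal elements of $\mathcal{B}|_S$, hence indexes a face of $N_{\mathcal{B}|_S}$ of dimension $|S|-|\mathcal{N}|$; the shift $t^{|I|-|S|-1}$ is exactly $(|I|-|\mathcal{N}|-1)-(|S|-|\mathcal{N}|)$. With this made precise your argument goes through.
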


This recursion was first proved for nestohedra but extends to hypernestohedra since the $f$-polynomial of a hypernestohedron $N_{\mathcal{B}}$ is equal to the $f$-polynomial of the nestohedron $N_{\overline{\mathcal{B}}}$.

\begin{theorem} The $f$-polynomial $f_{N_{\mathcal{B}}}(t)$ is a strong valuation on hypernestohedra.
\end{theorem}


\begin{proof} We proceed by induction on the size of the ground set $I$. When $|I| = 1$ every hypernestohedron is a point, so $f_-$ is trivially a strong valuation. Now suppose that $f_-$ is a strong valuation for all $J$ with $|J| < k$, and consider a finite set $I$ with $|I| = k$. 

First, assume $\mathcal{B}$ is disconnected with connected components $\mathcal{B}_1, \ldots, \mathcal{B}_k$. Let $S_1 \sqcup \cdots \sqcup S_k$ be the ordered set partition where $S_i$ is the ground set of $\mathcal{B}_i$. Then
$\Delta_{S_1, \ldots, S_k}(\mathcal{B}) = \mathcal{B}_1 \otimes \ldots \otimes \mathcal{B}_k$ and the recurrence of Proposition \ref{prop: f-polynomial recurrence}.2  takes the form
 \[ f_{\mathcal{B}}(t) = m \circ ( f_{-}(t) \otimes \ldots \otimes f_{-}(t) )\circ \Delta_{S_1, \ldots, S_k} (\mathcal{B}).\]
By the inductive hypothesis, the function $f_{-}(t)$ is a strong valuation on ground sets of size less than $k$. Therefore, $f_{-}(t)$ is a strong valuation on connected building multisets on $I$ by Theorem \ref{mainthm:vals}.

We now turn to the case when $\mathcal{B}$ is connected. Let $g$ be the function on building multisets given by $g(\mathcal{B}) = t^{|T|-1}$ for building multisets $\mathcal{B}$ on $T$. Now the recurrence of Proposition \ref{prop: f-polynomial recurrence} can be written as
 \[ f_{\mathcal{B}}(t) = \sum_{\substack{S \sqcup T = I \\ T \not = \emptyset}} m \circ (f_{-}(t) \otimes g) \circ \Delta_{S,T}(\mathcal{B}).\]
For each decomposition $S \sqcup T = I$, the function $f_{-}$ is strongly valuative by the inductive hypothesis, and the function $g$ is constant so it is also strongly valuative.  Theorem \ref{mainthm:vals} implies that $f_-$ is also strongly valuative for building multisets on the ground set $I$. This completes the induction.
\end{proof}

This has the following consequence for nestohedral subdivisons.

\begin{corollary}\label{cor: nestohedra no subdivisions} Hypernestohedra have no subdivisions into other hypernestohedra.
\end{corollary}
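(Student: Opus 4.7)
The plan is to combine the just-established valuation property of $\mathcal{B}\mapsto f_{\mathcal{B}}(t)$ with an elementary degree comparison in the polynomial identity it produces. Suppose $\{N_{\mathcal{B}_1}, \ldots, N_{\mathcal{B}_k}\}$ is a subdivision of $N_{\mathcal{B}}$ into nestohedra. Write $d = \dim N_{\mathcal{B}}$ and $d_i = \dim N_{\mathcal{B}_i}$. The valuation identity then reads
\[
f_{\mathcal{B}}(t) \;=\; \sum_{i=1}^{k} (-1)^{d - d_i}\, f_{\mathcal{B}_i}(t).
\]

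The first step is to record that for any polytope $P$, the $f$-polynomial $f_P(t) = \sum_{F\leq P} t^{\dim F}$ has degree exactly $\dim P$ with leading coefficient $1$, since $P$ itself is the unique top-dimensional face. I would then compare the coefficients of $t^d$ on both sides of the identity above. The left hand side contributes $1$. On the right, only those indices $i$ with $d_i = d$ can contribute to $t^d$, and each such term carries weight $(-1)^{d-d_i}\cdot 1 = 1$, while each $i$ with $d_i < d$ contributes $0$. This forces
\[
\#\{\,i : d_i = d\,\} \;=\; 1,
\]
so the subdivision has a unique top-dimensional cell $N_{\mathcal{B}_{i_0}}$.

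Finally, I would upgrade this to the conclusion of triviality. Because $N_{\mathcal{B}_{i_0}} \subseteq N_{\mathcal{B}}$ is a $d$-dimensional polytope and the cells of the subdivision cover $N_{\mathcal{B}}$, necessarily $N_{\mathcal{B}_{i_0}} = N_{\mathcal{B}}$ as sets. Then the subdivision axiom that every non-maximal cell is an intersection of maximal cells leaves no room for any further cell: the only intersection available is $N_{\mathcal{B}}$ itself, which is maximal. Hence the subdivision must equal $\{N_{\mathcal{B}}\}$.

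I do not expect a substantive obstacle here; the whole argument is a one-line leading-coefficient comparison, so the hardest ingredient was the valuativity of $f_{(-)}(t)$, which has already been proved. The only item requiring a brief explicit mention is the observation that the leading coefficient of the $f$-polynomial of a polytope is $1$, which is immediate from the definition.
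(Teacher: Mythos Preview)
Your proof is correct and follows essentially the same approach as the paper: both compare the coefficient of $t^d$ in the valuation identity for $f_{(-)}(t)$, using that the leading coefficient of any $f$-polynomial is $1$. The paper phrases it contrapositively (a nontrivial subdivision would have more than one top-dimensional cell, giving leading coefficient larger than $1$), while you derive the uniqueness of the top cell first and then argue triviality, but the substance is identical.
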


\begin{proof} Since $f_{-}(t)$ is a strong valuation, it is also a weak valuation. If a hypernestohedron $N$ of dimension $d$ had a non-trivial subdivision $\mathcal{P}$, then $\mathcal{P}$ would contain more than one hypernestohedron of dimension $d$. Then, the coefficient of $t^d$ of $f_{N}(t)$ would be $1$ and the coefficient of $t^d$ of $f_{\mathcal{P}}(t)$ would be larger than one. This is a contradiction.
\end{proof}

\section{Acknowledgments}

We thank Marcelo Aguiar, Jos\'e Bastidas, Carolina Benedetti, Chris Eur, Alex Fink, Nick Proudfoot, Vic Reiner, Felipe Rinc\'on, Raman Sanyal, and Mariel Supina for very helpful conversations on this topic.
Part of this work was carried out while FA was on sabbatical at the Universidad de Los Andes in Bogot\'a. He is very thankful to the Simons Foundation and San Francisco State University for funding this visit, and to Los Andes and the caf\'es of Bogot\'a for providing a wonderful setting to work on this project.

\begin{small}

\bibliographystyle{plain}
\bibliography{valuations.bib}

\begin{thebibliography}{10}

\bibitem{AHK15}
Karim Adiprasito, June Huh, and Eric Katz.
\newblock {Hodge Theory for Combinatorial Geometries}.
\newblock {\em Annals of Mathematics}, 188, 11 2015.

\bibitem{AA17}
Marcelo {Aguiar} and Federico {Ardila}.
\newblock {Hopf monoids and generalized permutahedra}.
\newblock {\em arXiv preprint arXiv:1709.07504}, September 2017.

\bibitem{ABS06}
Marcelo Aguiar, Nantel Bergeron, and Frank Sottile.
\newblock {Combinatorial Hopf algebras and generalized Dehn-Sommerville
  relations}.
\newblock {\em Compositio Mathematica}, 142(1):1–30, 2006.

\bibitem{AM10}
Marcelo Aguiar and Swapneel~Arvind Mahajan.
\newblock {\em {Monoidal functors, species and Hopf algebras}}, volume~29.
\newblock American Mathematical Society Providence, RI, 2010.

\bibitem{PRW06}
{Alexander Postnivkov, Victor Reiner, and Lauren Williams}.
\newblock {Faces of Generalized Permutohedra}.
\newblock {\em Documenta Mathematica}, 13:207--273, 2006.

\bibitem{Ardila07semimatroids}
Federico Ardila.
\newblock {Semimatroids and their Tutte polynomials}.
\newblock {\em Revista Colombiana de Matem{\'a}ticas}, 41(1):39--66, 2007.

\bibitem{ArdilaBenedettiDoker}
Federico Ardila, Carolina Benedetti, and Jeffrey Doker.
\newblock Matroid polytopes and their volumes.
\newblock {\em Discrete \& Computational Geometry}, 43(4):841--854, 2010.

\bibitem{AFR}
Federico Ardila, Alex Fink, and Felipe Rinc{\'o}n.
\newblock Valuations for matroid polytope subdivisions.
\newblock {\em Canadian Journal of Mathematics}, 62(6):1228--1245, 2010.

\bibitem{ArdilaKlivans}
Federico Ardila and Caroline~J Klivans.
\newblock {The Bergman complex of a matroid and phylogenetic trees}.
\newblock {\em Journal of Combinatorial Theory, Series B}, 96(1):38--49, 2006.

\bibitem{Bastidas20}
Jose Bastidas.
\newblock The polytope algebra of generalized permutahedra.
\newblock {\em arXiv preprint arXiv:2009.05876}, 2020.

\bibitem{BeckSanyal}
Matthias Beck and Raman Sanyal.
\newblock {\em Combinatorial reciprocity theorems}, volume 195.
\newblock American Mathematical Soc., 2018.

\bibitem{BJR09}
Louis~J. Billera, Ning Jia, and Victor Reiner.
\newblock A quasisymmetric function for matroids.
\newblock {\em European Journal of Combinatorics}, 30(8):1727 -- 1757, 2009.
\newblock Combinatorial Geometries and Applications: Oriented Matroids and
  Matroids.

\bibitem{BGW03}
Alexandre~V. Borovik, I.~M. Gelfand, and Neil White.
\newblock {\em Coxeter Matroids}, pages 151--197.
\newblock Birkh{\"a}user Boston, Boston, MA, 2003.

\bibitem{Bri37}
C.J. {Brianchon}.
\newblock {Th\'{e}or\`{e}me nouveau sur les poly\`{e}dres convexes}.
\newblock {\em J. \'{E}cole Polytechnique}, 15:317--319, 1837.

\bibitem{CD06}
Michael Carr and Satyan~L. Devadoss.
\newblock Coxeter complexes and graph-associahedra.
\newblock {\em Topology and its Applications}, 153(12):2155 -- 2168, 2006.

\bibitem{toricvarietiesbook}
David~A Cox, John~B Little, and Henry~K Schenck.
\newblock {\em Toric varieties}, volume 124.
\newblock American Mathematical Soc., 2011.

\bibitem{Crapo69}
Henry~H Crapo.
\newblock {The Tutte polynomial}.
\newblock {\em Aequationes Mathematicae}, 3(3):211--229, 1969.

\bibitem{CP96}
Corrado De~Concini and Claudio Procesi.
\newblock {Wonderful Models Of Subspace Arrangements}.
\newblock {\em Selecta Mathematica}, 1, 03 1996.

\bibitem{LRS}
Luc{\'\i}a~L{\'o}pez de~Medrano, Felipe Rinc{\'o}n, and Kristin Shaw.
\newblock {Chern--Schwartz--MacPherson cycles of matroids}.
\newblock {\em Proceedings of the London Mathematical Society}, 120(1):1--27,
  2020.

\bibitem{DF10}
Harm Derksen and Alex Fink.
\newblock Valuative invariants for polymatroids.
\newblock {\em Advances in Mathematics}, 225(4):1840 -- 1892, 2010.

\bibitem{DES20}
R.~Dinu, C.~Eur, and Tim Seynnaeve.
\newblock {K-theoretic Tutte polynomials of morphisms of matroids.}
\newblock {\em arXiv: Combinatorics}, 2020.

\bibitem{DBKR19}
Galen Dorpalen-Barry, J.~Kim, and V.~Reiner.
\newblock {Whitney Numbers for Poset Cones}.
\newblock {\em arXiv: Combinatorics}, 2019.

\bibitem{DFM17}
Cl{\'{e}}ment Dupont, Alex Fink, and Luca Moci.
\newblock {Universal Tutte characters via combinatorial coalgebras}.
\newblock {\em Algebraic Combinatorics}, 1, 11 2017.

\bibitem{Early}
Nick Early.
\newblock Canonical bases for permutohedral plates.
\newblock {\em arXiv preprint arXiv:1712.08520}, 2017.

\bibitem{edmonds70}
Jack Edmonds.
\newblock {Submodular functions, matroids, and certain polyhedra}.
\newblock In {\em Combinatorial {S}tructures and their {A}pplications ({P}roc.
  {C}algary {I}nternat. {C}onf., {C}algary, {A}lta., 1969)}, pages 69--87.
  Gordon and Breach, New York, 1970.

\bibitem{EPW}
Ben Elias, Nicholas Proudfoot, and Max Wakefield.
\newblock {The {K}azhdan--{L}usztig polynomial of a matroid}.
\newblock {\em Advances in Mathematics}, 299:36--70, 2016.

\bibitem{Eur20}
Christopher Eur.
\newblock {Divisors on matroids and their volumes}.
\newblock {\em Journal of Combinatorial Theory, Series A}, 169:105135, 2020.

\bibitem{ESS}
Christopher Eur, Mario Sanchez, and Mariel Supina.
\newblock {The Universal Valuation of Coxeter Matroids}.
\newblock {\em arXiv preprint arXiv:2008.01121}, 2020.

\bibitem{Fujishige}
Satoru Fujishige.
\newblock {\em Submodular functions and optimization}.
\newblock Elsevier, 2005.

\bibitem{FultonHarris}
William Fulton and Joe Harris.
\newblock {\em Representation theory: a first course}, volume 129.
\newblock Springer Science \& Business Media, 2013.

\bibitem{FS94}
William {Fulton} and Bernd {Sturmfels}.
\newblock {Intersection Theory on Toric Varieties}.
\newblock In {\em eprint arXiv:alg-geom/9403002}, March 1994.

\bibitem{GX20}
A.~Gao and M.~H. Xie.
\newblock {The inverse Kazhdan-Lusztig polynomial of a matroid.}
\newblock {\em arXiv: Combinatorics}, 2020.

\bibitem{Gor93}
G.~Gordon.
\newblock {A Tutte Polynomial for Partially Ordered Sets}.
\newblock {\em Journal of Combinatorial Theory, Series B}, 59(1):132 -- 155,
  1993.

\bibitem{Gra74}
J.P. {Gram}.
\newblock {Om rumvinklerne i et polyeder}.
\newblock {\em Tidsskrift for Math}, 4:161--163, 1874.

\bibitem{Greub}
Werner Greub.
\newblock {\em Multilinear Algebra}.
\newblock Springer, 1978.

\bibitem{RG14}
Darij {Grinberg} and Victor {Reiner}.
\newblock {Hopf Algebras in Combinatorics}.
\newblock {\em ArXiv e-prints}, September 2014.

\bibitem{HKT}
Paul Hacking, Sean Keel, and Jenia Tevelev.
\newblock {Compactification of the moduli space of hyperplane arrangements}.
\newblock {\em arXiv preprint math/0501227}, 2005.

\bibitem{humpert12}
Brandon Humpert and Jeremy~L. Martin.
\newblock {The incidence {H}opf algebra of graphs}.
\newblock {\em SIAM J. Discrete Math.}, 26(2):555--570, 2012.

\bibitem{JKU19}
David Jensen, Max Kutler, and Jeremy Usatine.
\newblock The motivic zeta functions of a matroid, 10 2019.

\bibitem{joni82:_coalg}
S.~A. Joni and G.-C. Rota.
\newblock Coalgebras and bialgebras in combinatorics.
\newblock In {\em {Umbral Calculus and Hopf Algebras (Norman, OK, 1978)}},
  volume~6 of {\em Contemp. Math.}, pages 1--47. Amer. Math. Soc., Providence,
  R.I., 1982.

\bibitem{Kap}
Mikhail~M Kapranov.
\newblock {Chow quotients of Grassmannians. I}.
\newblock {\em Adv. Soviet Math}, 16(2):29--110, 1993.

\bibitem{KSL91}
Bernhard Korte, Rainer Schrader, and L{\'{a}}szl{\'{o}} Lov{\'{a}}sz.
\newblock {\em {Greedoids}}, volume~4 of {\em Algorithms and {Combinatorics}}.
\newblock Springer Berlin Heidelberg, Berlin, Heidelberg, 1991.

\bibitem{Lafforgue2}
Laurent Lafforgue.
\newblock {Pavages des simplexes, sch{\'e}mas de graphes recoll{\'e}s et
  compactification des $PGL_r^{n+1}/PGL_r$}.
\newblock {\em Inventiones mathematicae}, 136(1):233--271, 1999.

\bibitem{Lafforgue1}
Laurent Lafforgue.
\newblock {\em {Chirurgie des Grassmanniennes}}.
\newblock Number~19. American Mathematical Soc., 2003.

\bibitem{lovasz09}
L{\'a}szl{\'o} Lov{\'a}sz and Michael~D Plummer.
\newblock {\em Matching theory}, volume 367.
\newblock American Mathematical Soc., 2009.

\bibitem{McM93}
P.~McMullen.
\newblock {On simple polytopes}.
\newblock {\em Inventiones mathematicae}, 113:419--444, 1993.

\bibitem{Ocneanu}
Adrian Ocneanu.
\newblock {Higher representation theory. Harvard Physics 267, Lecture 34}.
\newblock {\em At https://youtu.be/9gHzFLfPFFU?t=380}, 2017.

\bibitem{postnikov09}
Alexander Postnikov.
\newblock Permutohedra, associahedra, and beyond.
\newblock {\em Int. Math. Res. Not. IMRN}, (6):1026--1106, 2009.

\bibitem{Rincon}
Felipe Rinc{\'o}n.
\newblock {Isotropical linear spaces and valuated Delta-matroids}.
\newblock {\em Journal of Combinatorial Theory, Series A}, 119(1):14--32, 2012.

\bibitem{schmitt93:_hopf}
William~R. Schmitt.
\newblock {Hopf algebras of combinatorial structures}.
\newblock {\em Canad. J. Math.}, 45(2):412--428, 1993.

\bibitem{Sch95}
William~R. Schmitt.
\newblock Hopf algebra methods in graph theory.
\newblock {\em Journal of Pure and Applied Algebra}, 101(1):77 -- 90, 1995.

\bibitem{Schrijver}
Alexander Schrijver.
\newblock {\em Combinatorial optimization: polyhedra and efficiency},
  volume~24.
\newblock Springer Science \& Business Media, 2003.

\bibitem{SS04}
David Speyer and Bernd Sturmfels.
\newblock The tropical grassmannian.
\newblock {\em arXiv preprint math/0304218}, 2003.

\bibitem{Speyer}
David~E Speyer.
\newblock {Tropical linear spaces}.
\newblock {\em SIAM Journal on Discrete Mathematics}, 22(4):1527--1558, 2008.

\bibitem{SpeyerKtheory}
David~E Speyer.
\newblock {A matroid invariant via the K-theory of the Grassmannian}.
\newblock {\em Advances in Mathematics}, 221(3):882--913, 2009.

\bibitem{stanley1986two}
Richard Stanley.
\newblock Two poset polytopes.
\newblock {\em Discrete \& Computational Geometry}, 1(1):9--23, 1986.

\bibitem{Stanley70}
Richard~P Stanley.
\newblock A chromatic-like polynomial for ordered sets.
\newblock In {\em Proc. Second Chapel Hill Conf. on Combinatorial Mathematics
  and its Applications (Univ. North Carolina, Chapel Hill, NC, 1970), Univ.
  North Carolina, Chapel Hill, NC}, pages 421--427, 1970.

\bibitem{Tutte67}
William~Thomas Tutte.
\newblock On dichromatic polynomials.
\newblock {\em Journal of Combinatorial Theory}, 2(3):301--320, 1967.

\bibitem{Ver80}
Michel~Las Vergnas.
\newblock {On the Tutte Polynomial of a Morphism of Matroids}.
\newblock In M.~Deza and I.G. Rosenberg, editors, {\em Combinatorics 79 Part
  I}, volume~8 of {\em Annals of Discrete Mathematics}, pages 7 -- 20.
  Elsevier, 1980.

\bibitem{Zaslavsky75}
Thomas Zaslavsky.
\newblock {\em Facing up to arrangements: Face-count formulas for partitions of
  space by hyperplanes: Face-count formulas for partitions of space by
  hyperplanes}, volume 154.
\newblock American Mathematical Soc., 1975.

\end{thebibliography}

\end{small}

\newpage

\section{Appendix: Hopf algebraic background} \label{sec:appendix}

\subsection{Hopf monoids}

In this appendix we give the precise definition of a Hopf monoid. We also prove the First Isomorphism Theorem in this setting.

\bigskip
\noindent
\textsf{\textbf{Species}}.
A (connected) \textbf{linear species} $\mathbf{F}$ is a functor from the category of finite sets with bijections to the category of vector spaces over $\F$ such that $\mbf{F}[\emptyset] \cong \F$. Explicitly, this consists of the following data.

\begin{itemize}
    \item For each finite set $I$, a vector space $\mathbf{F}[I]$ called the structures of type $F$ on label set $I$.
    \item For each bijection $f:I \to J$ an isomorphism 
     \[\mbf{F}(f): \mbf{F}[I] \to \mbf{F}[J], \]
     such that  $\mbf{F}[\text{id}] = \text{id}$ and for any two bijections $f:I \to J$ and $g:J \to K$ we have
      \[ \mbf{F}[g \circ f] = \mbf{F}[g] \circ \mbf{F}[f].\]
     \end{itemize}

\medskip

   A \textbf{morphism of linear species} $\alpha$  from $\mbf{F_1}$ to $\mbf{F_2}$ is a natural transformation of functors. In other words, $\alpha$ is a collection of linear maps $\alpha[I]: \mbf{F_1}[I] \to \mbf{F_2}[I]$ such that the following diagram commutes

    \begin{center}
    \begin{tikzcd}
    \mbf{F_1}[I] \arrow{r}{\alpha[I]} \arrow{d}{\mbf{F_1}[f]}
    &\mbf{F_2}[I] \arrow{d}{\mbf{F_2}[f]}\\
    \mbf{F_1}[J] \arrow{r}{\alpha[J]} & \mbf{F_2}[J]
    \end{tikzcd}
    \end{center}
    for any two sets $I,J$ and any bijection $f: I \to J$.

If we have a collection of linear maps $g[I]: \mbf{F}[I] \to V$ to the same vector space $V$, we will often identify this with the species map $g$ from $\mbf{F}$ to the species $\mbf{V}[I] = V$ with trivial maps $\mbf{V}[f] = id$ for all $f: I \to J$.

\bigskip
\noindent \textsf{\textbf{Monoids}}.
A (connected) \textbf{linear monoid} $(\mbf{M},m)$ is a linear species equipped with a collection of linear maps
 \[ m_{S, T}: \mbf{M}[S] \otimes \mbf{M}[T] \to \mbf{M}[I],\]
for each decomposition $I = S \sqcup T$. These maps must satisfy the following axioms:
\vskip 2ex
\noindent $\bullet$  \textit{(Naturality)} Let $I$ and $J$ be two sets and $f: I \to J$ be a bijection. Let $I = S \sqcup T$ be a decomposition and let $f|_S$ and $f|_T$ be the restrictions of $f$ to $S$ and $T$, respectively. This gives us a decomposition of $J = f(S) \sqcup f(T)$ and a pair of bijections $f|_S: S \to f(S)$ and $f|_T : T \to f(T)$ Then, we have the following commutative diagram
\begin{center}
\begin{tikzcd}[column sep=large]
\mbf{M}[S] \otimes \mbf{M}[T] \arrow{r}{m_{S,T}} \arrow{d}{\mbf{M}[f|_S] \otimes \mbf{M}[f|_T]}
&\mbf{M}[I] \arrow{d}{\mbf{M}[f]}
\\
\mbf{M}[f(S)] \otimes \mbf{M}[f(T)] \arrow{r}{m_{f(S), f(T)}} 
&\mbf{M}[J]
\end{tikzcd}
\end{center}
\noindent $\bullet$ \textit{(Unitality)} We have $\mbf{M}[\emptyset] \cong \F$. Denote the unit of that vector space by $1$. For any $x \in \mbf{M}[I]$ and for the two trivial decompositions $I = I \sqcup \emptyset$ and $I = \emptyset \sqcup I$, we have
\begin{align*}
1 \cdot x = x \cdot 1 &= x
\end{align*}
 \noindent $\bullet$ \textit{(Associativity)} Let $I = R \sqcup S \sqcup T $ be a decomposition of the index set $I$. Then the following diagram commutes

\begin{center}
\begin{tikzcd}[column sep=large]
\mbf{M}[R] \otimes \mbf{M}[S] \otimes \mbf{M}[T] \arrow{r}{\text{id} \;\otimes\; m_{S,T}} \arrow{d}{m_{R,S} \;\otimes \;\text{id}}
& \mbf{M}[R] \otimes \mbf{M}[S \sqcup T] \arrow{d}{m_{R, S \sqcup T}} 
\\
\mbf{M}[R \sqcup S] \otimes \mbf{M}[T] \arrow{r}{m_{R \sqcup S, T}}
&\mbf{M}[I]
\end{tikzcd}
\end{center}
This allows us to define a multiplication map $m_{S_1,S_2, \ldots, S_k}$ for any set decomposition $I = S_1 \sqcup \cdots \sqcup S_k$.

\medskip

A \textbf{morphism of monoids} from $\mbf{M_1}$ to $\mbf{M_2}$ is a species morphism $\alpha:M_1 \rightarrow M_2$ that is compatible with the monoid structure; that is, 
 \[ 
 \alpha[I] \circ m_{S,T} = m_{S,T} \circ \alpha[S]\otimes \alpha[T]; 
 \]
equivalently, for any $x \in \mbf{M_1}[S]$ and $y \in \mbf{M_1}[T]$ we have $\alpha(x \cdot y) = \alpha(x) \cdot \alpha(y)$.

\bigskip
\noindent \textsf{\textbf{Comonoids}}.
A (connected) \textbf{linear comonoid} $(\mbf{C},\Delta)$ is a linear species equipped with a collection of linear maps
 \[ \Delta_{S,T}: \mbf{C}[I] \to \mbf{C}[S] \otimes \mbf{C}[T]. \]
for each set decomposition $I = S \sqcup T$. These functions must satisfy the following axioms:
\vskip 2ex

\noindent
$\bullet$ \textit{(Naturality)} Let $I$ and $J$ be two sets and $\sigma: I \to J$ be a bijection. Let $I = S \sqcup T$ be a decomposition and let $\sigma|_S$ and $\sigma|_T$ be the restrictions of $\sigma$ to $S$ and $T$, respectively. Then, we have the following commutative diagram

    \begin{center}
    \begin{tikzcd}[column sep=large]
    \mbf{C}[I] \arrow{r}{\Delta_{S,T}} \arrow{d}{\mbf{C}[\sigma]}
    &\mbf{C}[S] \otimes \mbf{C}[T] \arrow{d}{\mbf{C}[\sigma|_T] \otimes \mbf{C}[\sigma|_S]}\\
    \mbf{C}[J] \arrow{r}{\Delta_{\sigma(S), \sigma(T)}}
    &\mbf{C}[\sigma(S)] \otimes \mbf{C}[\sigma(T)]
    \end{tikzcd}
    \end{center}

    \noindent
$\bullet$     \textit{(Counitality)} We have $\mbf{C}[\emptyset] \cong \F$. Denote the (co)unit of that vector space by $1$. For any $x \in \mbf{C}[I]$ and  the two trivial decompositions $I = I \sqcup \emptyset$ and $I = \emptyset \sqcup I$ we have
    \begin{align*}
    \Delta_{I,\emptyset}(x) &= x \otimes 1,\\
    \Delta_{\emptyset,I}(x) &= 1 \otimes x.
    \end{align*}

    \noindent
    $\bullet$     \textit{(Coassociativity)} Let $I = R \sqcup S \sqcup T$ be a decomposition of the index set $I$ into three. Then the following diagram commutes

    \begin{center}
    \begin{tikzcd}[column sep=large]
    \mbf{C}[I] \arrow{r}{\Delta_{R, S \sqcup T}} \arrow{d}{\Delta_{R \sqcup S, T}}
    &\mbf{C}[R] \otimes \mbf{C}[ S\sqcup T] \arrow{d}{\text{id} \; \otimes \; \Delta_{S,T}}
    \\
    \mbf{C}[R \sqcup S] \otimes \mbf{C}[T] \arrow{r}{\Delta_{R,S} \; \otimes \; \text{id}}
    &\mbf{C}[R] \otimes \mbf{C}[S] \otimes \mbf{C}[T]
    \end{tikzcd}
    \end{center}

\medskip

A \textbf{morphism of comonoids} is a species morphism $\alpha: \mbf{C_1} \rightarrow \mbf{C_2}$ that is compatible with the comonoid structure; that is, 
 \[ 
 \Delta_{S,T} \circ \alpha[I] = \alpha[S] \otimes \alpha[T] \circ \Delta_{S,T}
 \]
or equivalently,  for any $c \in \mbf{C_1}[I]$ we have $\Delta_{S,T}(\alpha(c)) = \sum \alpha(c|_S) \otimes \alpha(c/_S)$ in Sweedler notation.

\bigskip
\noindent \textsf{\textbf{Hopf monoids}}.
A linear species $\mbf{H}$ is a \textbf{Hopf monoid} if it is a monoid and a comonoid, and those structures are compatible in the following sense.

\vskip 2ex

\noindent $\bullet$ \textit{(Compatibility)} Let $I = S_1 \sqcup S_2$ and $I = T_1 \sqcup T_2$ be two decompositions of $I$. Let $A = S_1 \cap T_1$, $B= S_1 \cap T_2$, $C = S_2 \cap T_1$, and $D = S_2 \cap T_2$ be their pairwise intersections. Then, we have the commutative diagram

\begin{center}
\begin{tikzcd}[column sep=tiny]
\tb{H}[S_1] \otimes \tb{H}[S_2] \arrow{r}{m_{S_1, S_2}} \arrow{d}{\Delta_{A, B}\; \otimes\; \Delta_{C, D}}
& \tb{H}[I] \arrow{r}{\Delta_{T_1,T_2}}
& \tb{H}[T_1] \otimes \tb{H}[T_2] \\
\tb{H}[A] \otimes \tb{H}[B] \otimes \tb{H}[C] \otimes \tb{H}[D] \arrow{rr}{\text{id} \; \otimes \; \beta \; \otimes \ \text{id}}
&
&\tb{H}[A] \otimes \tb{H}[C] \otimes \tb{H}[B] \otimes \tb{H}[D] \arrow{u}{m_{A,C}\; \otimes \;m_{B,D}}
\end{tikzcd}
\end{center}
where $\beta$ is the braiding map $\beta(x \otimes y) = (y \otimes x)$.

\medskip

A  \textbf{Hopf morphism} is a species morphism $\alpha: \mbf{H_1} \rightarrow \mbf{H_2}$ that is a monoid morphism and a comonoid morphism.

\medskip

The \textbf{antipode} of a Hopf monoid $\mbf{H}$
is the map $s[I]: \mbf{H}[I] \to \mbf{H}[I]$ given by
 \[s[I](x) = \sum_{S_1 \sqcup \cdots \sqcup S_k = I } (-1)^k m_{S_1,\ldots,S_k} \circ \Delta_{S_1,\ldots,S_k}(x). 
 \]
In general this formula has a large amount of cancellation. One major question is to give a combinatorial description of the antipode that is cancellation-free and grouping-free.

\subsection{Hopf ideals and quotients and the First Isomorphism Theorem}\label{sec:cofree}


\noindent $\bullet$ 
An \textbf{ideal} of a monoid $\mbf{M}$ is a subspecies $\mbf{g}$ such that for any set partition $S \sqcup T = I$ we have
 \[m_{S,T}(\mbf{g}[S] \otimes \mbf{M}[T]) \subset \mbf{g}[I] \qquad \textrm{ and } \qquad 
m_{S,T}(\mbf{M}[S] \otimes \mbf{g}[T]) \subset \mbf{g}[I]. \]

\noindent $\bullet$ 
A \textbf{coideal} of a comonoid $\mbf{C}$ is a subspecies $\mbf{g}$ such that for any set partition $S \sqcup T = I$ we have
 \[ \Delta_{S, T}(\mbf{g}[I]) \subset \tb{C}[S] \otimes \tb{g}[T] + \tb{g}[S] \otimes \tb{C}[T].\]

\noindent $\bullet$ 
A \textbf{Hopf ideal} of a Hopf monoid $\mbf{H}$ is a subspecies $\mbf{g}$ that is both an ideal and a coideal.

\medskip

Let $\mbf{F}$ be a species and $\mbf{g}$ be a subspecies. Let $\mbf{F}/\mbf{g}$ denote the species given by the vector spaces $\mbf{F}/\mbf{g}[I] = \mbf{F}[I]/\mbf{g}[I]$ with the natural maps between them. For any $x \in \mbf{F}[I]$ let $[x]$ denote the class of $x$ in the vector space quotient $\mbf{F}[I]/\mbf{g}[I]$.

\begin{enumerate}
    \item If $\mbf{F}$ is a monoid and $\mbf{g}$ is an ideal, then $\mbf{F}/\mbf{g}$ inherits the structure of a monoid called the \textbf{monoid quotient} of $\mbf{F}$ by $\mbf{g}$ given by
     \[m_{S,T}([x],[y]) = [m_{S,T}(x,y)] \qquad \text{ for } x \in \mbf{F}[S], y \in \mbf{F}[T]. \]
     \item If $\mbf{F}$ is a comonoid and $\mbf{g}$ is a coideal, then $\mbf{F}/\mbf{g}$ inherits the structure of a comonoid called the \textbf{comonoid quotient} of $\mbf{F}$ by $\mbf{g}$ given by
      \[ \Delta_{S,T}([x]) = [\Delta_{S,T}(x)] \qquad \text{ for } x \in \mbf{F}[I].\]
    \item If $\mbf{F}$ is a Hopf monoid and $\mbf{g}$ is a Hopf ideal, then the \textbf{Hopf quotient} of $\mbf{F}$ by $\mbf{g}$ is the Hopf monoid given by the comonoid and monoid structures above.
\end{enumerate}

Noether's First Isomorphism Theorem holds for Hopf monoids in the following formulation.

\begin{theorem}\label{thm:Noether} (The First Isomorphism Theorem) Let $\mbf{H_1}$ and $\mbf{H_2}$ be two linear Hopf monoids. Let $f: \mbf{H_1} \to \mbf{H_2}$ be a Hopf monoid morphism. Then,

\begin{itemize}
	\item The image of $f$ is a Hopf submonoid of $\mbf{H_2}$.
	\item The kernel of $f$ is a Hopf ideal of $\mbf{H_1}$
	\item The quotient $\mbf{H_1}/\operatorname{Ker}(f)$ is isomorphic to $\operatorname{Im}(f)$ as Hopf monoids.
\end{itemize}
\end{theorem}

\begin{proof} To show that the image of $f$ is a Hopf submonoid $\mbf{H_2}$, we need to show that the image is closed under multiplication and comultiplication. For multiplication, let $S \sqcup T$ be a decomposition of $I$ and let $x \in \mbf{H_1}[S]$ and $y \in \mbf{H_1}[T]$. Then, since $f$ is a Hopf monoid morphism we have the two equations
 \[ m_{S,T}(f[S](x) \otimes f[T](y)) = f[I](m_{S,T}(x \otimes y)) \qquad \Delta_{S,T}(f[I](x)) = (f[S] \otimes f[T]) (\Delta_{S,T}(x))\]
and hence the image is closed under multiplication and comultiplication.

To show that the kernel of $f$ is a Hopf ideal, let $x \in \operatorname{Ker}(f[I])$ and $y \in \mbf{H_1}$. Then, 
 \[f[I](m_{S,T}(x \otimes y)) = m_{S,T}(f[S](x) \otimes f[T](y)) = m_{S,T}(0 \otimes f[T](y)) = 0. \]
This shows that the kernel is an ideal. Similarly, if $x \in \mbf{Ker}(f[I])$, then
 \[ (f[S] \otimes f[T]) (\Delta_{S,T}(x)) = 0 = \Delta_{S,T}(f[I](x)).\]
This means that {}
 \begin{eqnarray*}
 \Delta_{S,T}(x) &\subseteq& \operatorname{Ker}(f[S] \otimes f[T]) \\
&=&  \operatorname{Ker}(f[S]) \otimes \mbf{H_1}[T] + \mbf{H_1}[S] \otimes \operatorname{Ker}(f[T])
\end{eqnarray*}
and hence the kernel is also a comonoid ideal. The equality follows from \cite[Section 1.19]{Greub}.

Finally, by the First Isomorphism Theorem for vector spaces we have a well-defined linear isomorphism from $\mbf{H_1}[I]/\operatorname{Ker}(f[I])$ to $\operatorname{Im}(f[I])$. The previous two statements of this theorem show that this is also a Hopf isomorphism.
\end{proof}

\subsection{Cofree Hopf Monoids and Universality}\label{sec:cofree}

An important aspect of combinatorial Hopf algebras is the theory of characters developed by Aguiar, Bergeron, and Sottile \cite{ABS06}. 
This gives a method of converting multiplicative functions on a Hopf algebra into quasisymmetric function invariants. We now describe Aguiar and Mahajan's generalization of this theory to Hopf monoids \cite[Section 11.4]{AM10}.

\bigskip
\noindent \textsf{\textbf{Cofree Hopf monoids.}}
A \textbf{positive monoid} is a linear species $\mbf{q}$ such that $\text{dim}(\mbf{q}[\emptyset]) = 0$, equipped with a multiplication map $m$ that satisfies all the axioms of a monoid except for unitality.  The \textbf{tensor species} $\mathcal{T}^{\vee}(\mbf{q})$ on a positive monoid $\mbf{q}$  is the linear species 
generated by ordered set partitions $\ell$ decorated with a $\mbf{q}$-structure on each part of $\ell$; that is, 
\[ 
\mathcal{T}^\vee(\mbf{q})[I] = \espan \{(\ell_1|\cdots|\ell_k , x_1  \otimes \cdots \otimes x_k) \; \lvert \; \ell_1 \sqcup \cdots \sqcup \ell_k = I, \, x_i \in \mbf{q}[A_i] \text{ for } 1 \leq i \leq k\}.
\]

The tensor species $\mathcal{T}^\vee(q)$ has a comultiplication map given by
\[\Delta_{S,T}(\ell_1 | \cdots | \ell_k, , x_1\otimes \cdots \otimes x_k) = 
\left \{ 
 \begin{array}{ll}
 (\ell_1 | \cdots | \ell_i, , x_1\otimes \cdots \otimes x_i) 
 \otimes
 (\ell_{i+1} | \cdots | \ell_k, , x_{i+1}\otimes \cdots \otimes x_k) &  \text{if } S=\ell_1 \sqcup \cdots \sqcup \ell_i. \\ 
 0 & \text{otherwise.}
 \end{array} \right.
 \]
It also has a multiplication map defined as follows.  For ordered set partitions $\ell$ of $S$ and $m$ on $T$, 
  \[m_{S,T}( (\ell, x_1 \otimes \cdots \otimes x_j) \otimes (m, y_1 \otimes \cdots y_k)) =  \sum_{\substack{n \textrm{ quasishuffle} \\ \textrm{of $\ell$ and $m$}}} (n, z_1 \otimes \cdots \otimes z_h), \]
%
where
 \[ z_i = \begin{cases}
 x_a & \text{if $n_i = \ell_a$,} \\
 y_b & \text{if $n_i = m_b$,} \\
 m_{\mbf{q}[n_i]}(\ell_a \otimes m_b) & \text{if $n_i = \ell_a \sqcup m_b$.}
 \end{cases}\]
This makes $\mathcal{T}^\vee(\mbf{q})$ into a Hopf monoid. This is the \textbf{cofree Hopf monoid on }$\mbf{q}$. We say that a Hopf monoid is \textbf{cofree} if it is isomorphic to $\mathcal{T}^{\vee}(\mbf{q})$ for some positive monoid.

\bigskip
\noindent \textsf{\textbf{Universality.}}
Let $\beta: \mathcal{T}^\vee(\mbf{q}) \to \mbf{q}$ be the projection map given by
 \[\beta(\ell_1| \cdots | \ell_k, x_1 \otimes \cdots \otimes x_k)) = \begin{cases}
 x_1 & \text{if } k = 1 \\
 0 & \text{otherwise}
 \end{cases}. \]

For any Hopf monoid $\mbf{H}$, we can construct a positive monoid $\mbf{H}_+$ whose underlying species agrees with $\mbf{H}$ whenever $I \not= \emptyset$ and otherwise $\mbf{H}[\emptyset] = \langle 0 \rangle$. This inherits the structure of a positive monoid from the multiplication of $\mbf{H}$.
The cofree Hopf monoid on $\mbf{q}$ satisfies the following universality result.

\begin{theorem}\label{thm: cofree universality}\cite[Theorem 11.23]{AM10} 
Let $\mbf{H}$ be a Hopf monoid and let $\mbf{q}$ be a positive monoid. For every multiplicative map $\zeta: \mbf{H}_+ \to \mbf{q}$, there exists a unique Hopf morphism $\hat{\zeta}: \mbf{H} \to \mathcal{T}^\vee(\mbf{q})$ such that $\beta \circ \hat{\zeta} = \zeta$. 
Furthermore,
 \[ 
 \hat{\zeta}(x) = \sum_{\ell_1 \sqcup \cdots \sqcup \ell_k = I} (\ell_1|\cdots|\ell_k, \zeta(x_1) \otimes \cdots \otimes \zeta(x_k)),\]
summing over the ordered set partitions $\ell = \ell_1 | \cdots | \ell_k$ of $I$, where $\Delta_{\ell_1,\ldots,\ell_k}(x) = x_1 \otimes \cdots \otimes x_k$.
\end{theorem}

\end{document}